\documentclass[10pt]{amsart}
\theoremstyle{plain}
\usepackage{amsmath, amssymb, amscd,graphicx}
\setlength{\textwidth}{5.5in}
\setlength{\oddsidemargin}{0.50in}
\setlength{\evensidemargin}{0.40in}

\newtheorem{lem}{Lemma}[section]
\newtheorem{prop}[lem]{Proposition}
\newtheorem{thrm}[lem]{Theorem}
\newtheorem{cor}[lem]{Corollary}

\newtheorem{defn}[lem]{Definition}

\newtheorem{tm}{Theorem}
\newtheorem*{question}{Question}
\newtheorem{conjecture}{Conjecture}

\newtheorem{crrr}[tm]{Corollary}

\newcommand{\Ozsvath}{{Ozsv{\'a}th  }}
\newcommand{\Szabo}{{Szab{\'o} }}

\newcommand{\Z}{{\mathbb{Z}}}

\newcommand{\hfhat}{{{\widehat{HF}}}}

\newcommand{\spi}{{{\mathbf s}}}

\newcommand{\spinc}{{{\mathrm{Spin}^c}}}
\newcommand{\gr}{{{\mathrm{gr} }}}

\newcommand{\bfx}{{{{\bf x}}}}

\newcommand{\ts}{{{\thinspace}}}

\renewcommand{\Ozsvath}{Ozsv{\'a}th }
\renewcommand{\Szabo}{Szab{\'o} }
\newcommand{\CF}{\widehat{CF}}
\newcommand{\HFK}{\widehat{HFK}}
\newcommand{\T}{\mathbb{T}}
\newcommand{\s}{\mathfrak{s}}
\newcommand{\x}{{\bf x}}
\newcommand{\y}{{\bf y}}
\newcommand{\Spinc}{\text{Spin}^c}
\renewcommand{\spinc}{\text{spin}^c}
\newcommand{\DBar}{\overline{\Delta}}

\renewcommand{\spi}{\mathfrak{s}}
\newcommand{\alphas}{\mbox{\boldmath$\alpha$}}
\newcommand{\betas}{\mbox{\boldmath$\beta$}}

\title[Lens space surgeries and L-space homology spheres]{Lens space surgeries and L-space homology spheres }
\author{Jacob Rasmussen}
\address{Princeton University Dept. of Mathematics, Princeton, NJ
  08544 \newline \phantom{Xx}  and DPMMS, University of Cambridge, UK}
\email{jrasmus@math.princeton.edu}
\thanks{The author was supported by NSF grant DMS-0603940 and a Sloan Fellowship.}

\begin{document}

\begin{abstract}
We describe necessary and sufficient conditions for a knot in an
L-space to have an L-space homology sphere surgery. We use these conditions
 to reformulate a conjecture of Berge about which knots in
\(S^3\) admit lens space surgeries. 
\end{abstract}

\maketitle

\section{Introduction}

Let \(K\) be a knot in \(S^3\). If \(r/s\) Dehn surgery on \(K\)
yields the lens space \(L(p,q)\), we say that \(K\) admits a lens
space surgery, and that the lens space is realized by surgery on
\(K\). It is a longstanding problem to determine which
\(K\subset S^3\) admit lens space surgeries. The question was first
raised by L. Moser \cite{Moser}, who showed that all torus knots admit
lens space surgeries. Later, many other examples were found
\cite{RolfsenLens}, \cite{FSLens}, culminating with the work of Berge
\cite{Berge}, who gave a conjecturally complete list of such knots. 

There has also been considerable work on the converse problem of
finding necessary conditions for a knot to admit a lens space
surgery. Perhaps the most important result in this direction is the
Cyclic Surgery Theorem of Culler, Gordon, Luecke, and Shalen
\cite{CyclicS}, which implies (among other things) that if \(K\) admits a
lens space surgery, then either \(K\) is a torus knot or the surgery
coefficient is an integer. More recently, \Ozsvath and \Szabo have
used Heegaard Floer homology to give strong constraints on the knot
Floer homology of a knot admitting a lens space surgery \cite{OSLens}.
In conjunction with work of Ni \cite{YiThesis}, their work implies that any
such \(K\) must be fibred.  

The argument in \cite{OSLens} relies on the fact that the Heegaard
Floer homology of a lens space is as small as possible. A
three-manifold with this property is called an {\it L-space}. More
formally, a rational homology sphere \(Z\) is an L-space if and only if
\(\hfhat(Z) \cong \Z^p\), where \(p=|H_1(Z)|\). The main theorem
of \cite{OSLens} gives necessary and sufficient conditions for a knot
in an L-space homology sphere to admit an L-space surgery. 

In this
paper, we consider the converse problem. Given a knot \(K\) in an
L-space \(Z\), when does \(K\) 
admit a surgery which is an L-space homology sphere (or LHS, for
short)? As it turns out, the answer to this question
 depends mainly on the genus of \(K\). If surgery on
 \(K\)   yields a homology sphere,  then \(K\) must generate
\(H_1(Z)\). (We call such knots {\it primitive}.) Thus
 \(K\) will  not bound a Seifert surface in
 \(Z\) unless \(Z\) is a homology sphere.
 Nevertheless, there is still a natural notion of the genus \(g(K)\):
if \(Z_0\) is the complement of a regular neighborhood of \(K\), we
define \(g(K)\) to be the minimal genus of a surface \(\Sigma \subset
(Z_0, \partial Z_0)\) whose boundary defines a nontrivial class in
\(H_1(\partial Z_0)\). We have

\begin{tm}
\label{Thrm:LHSCrit}
Let \(K\subset Z\) be a knot in an L-space, and suppose that some integer
surgery on \(K\) yields a homology sphere \(Y\). If
 \(g(K) < (|H_1(Z)|+1)/2 \), then \(Y\) is an L-space,
while if  \(g(K) > (|H_1(Z)|+1)/2 \), then \(Y\) is not an L-space.
\end{tm}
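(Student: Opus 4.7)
The plan is to apply the \OS integer surgery formula to $K\subset Z$, computing $HF^+(Y)$ (and hence $\hfhat(Y)$) from the filtered complex $CFK^\infty(Z,K)$, then combine the L-space hypothesis on $Z$ with the genus detection theorem to extract the threshold on $g(K)$. Since $K$ is primitive, $H_1(Z)\cong\Z/p$ with $p=|H_1(Z)|$ and $H_1(Z_0)\cong\Z$, so there is a canonical (up to sign) longitude $\lambda$ for which $\lambda$-framed integer surgery produces $Y$. The surgery formula expresses $HF^+(Y)$ as the homology of a mapping cone
\[
D\colon \bigoplus_{s}\mathbb{A}_s \longrightarrow \bigoplus_{s}\mathbb{B}_s,
\]
where $s$ ranges over relative \Spinc-structures on $Z_0$ (equivalently, Alexander gradings), each $\mathbb{B}_s$ is a shift of $\hfp(Z,\s)$ for the induced absolute $\s\in\Spinc(Z)$, and $\mathbb{A}_s$ is a subquotient of $CFK^\infty(Z,K)$ at grading $s$.

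\textbf{L-space simplification.} Since $Z$ is an L-space, $\hfhat(Z,\s)\cong\Z$ for each of the $p$ \Spinc-structures, so each $\mathbb{B}_s$ is a single tower $\mathcal{T}^+$. The spectral sequence $\hfhat(Z,K,\s)\Rightarrow\hfhat(Z,\s)=\Z$ (working over $\mathbb{F}_2$) forces $\rank \hfhat(Z,K,\s)=1+2k_\s$ for some $k_\s\geq 0$. A careful accounting of the cancellations in $D$ should then give an identity of the form
\[
\rank \hfhat(Y) \;=\; 1 + 2\!\sum_{\s\in\Spinc(Z)} k_\s,
\]
so that $Y$ is an L-space if and only if $\hfhat(Z,K,\s)$ has rank $1$ for every $\s$.

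\textbf{Genus bound and counting.} By the genus-detection theorem for knots in three-manifolds (\OS, Ni), $\hfhat(Z,K,s)$ vanishes whenever the Alexander grading satisfies $|s|>g(K)$. For primitive $K$, the Alexander gradings in the $p$ absolute \Spinc-structures lie in $p$ distinct cosets of $\Z$ in $\frac{1}{p}\Z$, integer-spaced within each coset. Counting integer-spaced slots in the symmetric window $[-g(K),g(K)]$: when $g(K)<(p+1)/2$, each coset admits exactly one slot in the window, so the spectral-sequence inequality $\rank \hfhat(Z,K,\s)\geq 1$ is saturated for every $\s$, and $Y$ is an L-space by the formula above. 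When $g(K)>(p+1)/2$, at least one coset contains multiple slots in the window, so some $\hfhat(Z,K,\s)$ has rank $\geq 3$, forcing $\rank \hfhat(Y)\geq 3$ and $Y$ is not an L-space.

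\textbf{Main obstacle.} The principal technical difficulty is justifying the precise rank identity for $\hfhat(Y)$ in terms of the $k_\s$: one must verify that the excess reduced rank of $\hfhat(Z,K)$ is transferred faithfully to the surgery side without further cancellation in the mapping cone, and in particular that the maps $\mathbb{A}_s\to\mathbb{B}_s$ behave as expected for knots in L-spaces. A secondary issue is setting up the Alexander grading normalization for the rationally null-homologous $K$ and invoking the genus detection theorem in this setting, so that the integer-spaced counting argument yields precisely the threshold $(p+1)/2$.
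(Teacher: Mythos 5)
Your overall strategy --- run the \OS mapping cone for integer surgery, use the L-space hypothesis on \(Z\) to make each \(B\)-group a single \(\Z\), and convert the genus hypothesis into a statement about the support of \(\HFK(K)\) via Ni's theorem --- is the same as the paper's. But there are two genuine gaps, and the second one is exactly where the real content of the theorem lives.

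First, the counting step is arithmetically wrong. By Ni's theorem the width of \(\HFK(K)\) equals \(2g(K)+p-1\), so the hypothesis \(g(K)<(p+1)/2\) gives width \(<2p\), i.e.\ the support of \(\HFK(K)\) lies in an interval containing at most \(2p\) integers. Within a single \(\spinc\) structure the Alexander gradings are spaced \(p\) apart, so such an interval can contain \emph{two} gradings \(n\) and \(n-p\) from the same coset; it is only when \(g(K)=0\) (width \(<p\)) that each coset is forced to meet the support in at most one point. So genus detection alone does not saturate the inequality \(\operatorname{rank}\HFK(K,\spi)\geq 1\), and your claim that ``each coset admits exactly one slot'' fails for every \(g(K)\geq 1\). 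Ruling out the two-slot configuration is the heart of the proof: the paper shows that if some \(\HFK(K,\spi_k;\Z/q)\) were supported in two gradings \(k\) and \(k-p\), then the L-space condition on \(Z\) forces the reduced differential \(d_+'\) to be surjective and \(d_-'\) to be injective between those two groups, whence \(d_-'d_+'x\neq 0\) for some \(x\); but \(d_-'d_+'+d_+'d_-'\) is the \((1,1)\)-component of \((d_\infty')^2=0\) in the reduced model of \(CFK^\infty(K)\), a contradiction. Nothing in your proposal supplies this step.

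Second, the rank identity \(\operatorname{rank}\hfhat(Y)=1+2\sum_{\spi}k_\spi\) that you flag as the ``main obstacle'' is in fact false as stated: the boundary case \(\HFK(K)\cong\Z^{p+2}\) with width exactly \(2p\) (which occurs, e.g., for Hedden's knots \(T_L\)) has \(\sum k_\spi=1\) yet can yield an L-space homology sphere, so cancellation in the mapping cone genuinely depends on \emph{where} the extra rank sits, not just on how much there is. The paper avoids any such identity by arguing in the contrapositive direction: if \(Y=K_1\) is an L-space, a count of summands in the truncated cone forces every \(A_n\cong\Z\) with at most one \(n\) where both \(\pi_n^{\pm}\) vanish, and the staircase structure from \cite{OSLens} then pins \(\HFK(K)\) down to \(\Z^p\) or \(\Z^{p+2}\) with the stated width constraints. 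If you want to salvage your outline, you should replace the rank identity with this combinatorial analysis of the \([\pm,\pm]\) summands of the cone, and add the \((d')^2=0\) argument to handle the two-slot cosets.
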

\noindent
There is also a precise description of what happens  when
\(g(K) = (|H_1(Z)|+1)/2\), but this is  more complicated to state,
so we postpone it to a later section.

The theorem has several antecedents. Most notably, 
a  similar theorem was proved  
by Hedden in \cite{HeddenLens}, using a different method. Also, the
second half of the theorem was originally proved (in the context of monopole
Floer homology) by Kronheimer, Mrowka, Ozsv{\'a}th, and \Szabo
\cite{KMOS}.

If \(K\subset S^3\) is a knot with a lens space surgery
\(L(p,q)\), there is  a dual knot \(\widetilde{K} \subset L(p,q)\) which admits
an \(S^3\) surgery. One of Berge's key insights is that it is
often better to study \(\widetilde{K}\) than \(K\). Indeed, in all of
Berge's examples, this dual knot has a particularly nice form: it is
an example of what we will call a {\it simple knot} in a lens
space. For readers familiar with Heegaard Floer homology, these knots
are easy to describe: they are the knots obtained by placing two
basepoints inside the standard genus one Heegaard diagram of
\(L(p,q)\). We will give a more precise definition in section 
\ref{Sec:Knots}; for the moment, it is enough to know that there
is a unique simple knot in each
homology class in \(H_1(L(p,q)).\)

\begin{tm}
\label{Thrm:Simple}
Suppose \(K \subset L(p,q)\), and let \(K'\)
be the simple knot in the same homology class. If \(K\) admits an
integer LHS surgery, then so does \(K'\); in addition, either \(g(K) =
(p+1)/2\), or 
\(g(K)=g(K')\) and the two knots have isomorphic knot Floer homology. 
\end{tm}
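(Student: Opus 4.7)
The plan is to combine Theorem \ref{Thrm:LHSCrit} with a rigidity statement for the knot Floer homology of knots admitting L-space homology sphere surgeries. The LHS hypothesis on $K$ will force $\HFK(K)$ to be as small as possible, matching the knot Floer homology of the simple knot $K'$. Theorem \ref{Thrm:LHSCrit} applied in the reverse direction to $K'$ then yields the LHS surgery on $K'$.

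First, apply Theorem \ref{Thrm:LHSCrit} to $K \subset L(p,q)$: the existence of an integer LHS surgery forces $g(K) \leq (p+1)/2$. If equality holds, we land in the first case of the conclusion, and the LHS claim for $K'$ will follow from the more detailed boundary-case analysis postponed later in the paper. So assume $g(K) < (p+1)/2$.

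The crux is to show that this strict inequality together with the LHS surgery forces $\HFK(K)$ to contain exactly one generator in each $\Spinc$ structure on $L(p,q) \setminus K$, with Alexander and Maslov gradings determined by the $\Spinc$ structure alone. The rank bound $\text{rk}\,\HFK(K) \geq p$ follows from the spectral sequence from $\HFK(K)$ to $\hfhat(L(p,q)) \cong \Z^p$. For the LHS condition to hold, the \OS mapping cone computing $\hfhat$ of the surgery from $\HFK(K)$ must have rank-one homology. The inequality $g(K) < (p+1)/2$ controls which Alexander gradings are ``in range'' in the cone, so that any excess generator in $\HFK(K)$ beyond the rank-$p$ minimum would survive to give an extra generator in $\hfhat(Y)$, contradicting $Y$ being an L-space. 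The bigrading of the surviving generator in each $\Spinc$ structure is then pinned down by the absolute $\Q$-grading on $\hfhat(Y)$, which depends only on the $\Spinc$ structure on $L(p,q) \setminus K$ and not on the knot.

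The simple knot $K'$ in the same homology class realizes this minimal $\HFK$ by construction (one generator in each $\Spinc$ structure on $L(p,q) \setminus K'$, coming from the intersection points of the standard genus-one Heegaard diagram), with bigradings matching the homological prescription above. Hence $\HFK(K) \cong \HFK(K')$ as bigraded groups, and $g(K) = g(K')$ since the knot genus in an L-space is detected by the top Alexander grading supporting nonvanishing $\HFK$. Applying Theorem \ref{Thrm:LHSCrit} to $K'$, and using $g(K') = g(K) < (p+1)/2$, shows that $K'$ admits an LHS surgery with the same surgery slope, which is determined by the common homology class. The main obstacle is the rigidity argument in the crux step: forcing $\HFK(K)$ down to rank exactly $p$ via the mapping cone, and then identifying the bigradings intrinsically in terms of the $\Spinc$ structure so that they automatically match those of the simple knot.
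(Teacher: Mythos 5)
Your proposal correctly reproduces one half of the argument (the mapping-cone rigidity forcing $\HFK(K)$ to have a single $\Z$ in each $\spinc$ structure when $g(K)<(p+1)/2$ --- this is Proposition~\ref{Prop:LSCrit} in the paper), but it has a genuine gap at exactly the step you flag as the crux. You assert that the Alexander and Maslov gradings of the surviving generator in each $\spinc$ structure are ``pinned down by the absolute $\Q$-grading on $\hfhat(Y)$, which depends only on the $\Spinc$ structure on $L(p,q)\setminus K$ and not on the knot.'' This is unsupported and is precisely what needs proving: $Y$ is a homology sphere with a single $\spinc$ structure and a single $d$-invariant, so it cannot distinguish the $p$ Alexander gradings; and the $d$-invariants of the large surgeries $K_m$ (which do see the Alexander gradings) depend on the knot, not merely on its homology class --- indeed Proposition~\ref{Prop:dInvt} of the paper shows they genuinely differ between $K$ and $K'$ in the boundary case. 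What you know after the rigidity step is only that $\HFK(K)$ has one generator in each residue class mod $p$, in \emph{some} Alexander gradings of total width $<2p$; nothing so far forces those gradings to agree with the gradings of the simple knot's generators, so neither $g(K)=g(K')$ nor $\HFK(K)\cong\HFK(K')$ follows. The missing ingredient is the Fox--Brody theorem: the Alexander polynomial of a knot, pushed into $\Z[H_1(L(p,q))]$, depends only on $[K]\in H_1(L(p,q))$. Combined with the symmetry normalization this determines $\DBar(K)$ modulo $(t^p-1)^2$ (Corollary~\ref{Cor:ChiDiv}), and the observation that $\DBar(K')$ has exactly one nonzero coefficient in each residue class mod $p$ --- a property with a unique representative in each congruence class mod $(t^p-1)^2$ --- then forces $\DBar(K)=\DBar(K')$ and hence the identification of bigraded groups.

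A second, smaller gap: in the case $g(K)=(p+1)/2$ you defer the claim that $K'$ still admits an LHS surgery to ``the boundary-case analysis postponed later,'' but this is part of the theorem and must be proved. The paper handles it by the same Euler-characteristic mechanism: when $\HFK(K)\cong\Z^{p+2}$ one shows $\DBar(K)-t^{-p}(t^p-1)^2$ has the one-coefficient-per-residue property, hence equals $\DBar(K')$, hence $\text{width}\,\HFK(K')<2p$ and Proposition~\ref{Prop:LSCrit} applies to $K'$. Without Fox--Brody you have no way to carry information from $K$ to $K'$ in either case.
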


The theorem suggests the following three-part approach to the Berge
conjecture ({\it c.f.} the similar program put forward by Baker,
Grigsby, and Hedden in \cite{BGH}).
First, determine all the simple knots in lens spaces which admit integer
LHS surgeries. Second, show that none of
the knots with \(g(K)=(p+1)/2\) which admit integer LHS surgeries 
actually yield \(S^3\). Finally, try to prove that if a simple knot
admits an integer LHS surgery, it is unique, 
 in the sense that it is the only knot in its homology class with that
knot Floer homology. 

The first step in this process can be reduced to
 a purely number-theoretic problem. In
section~\ref{Sec:Simple}, we describe an elementary
 algorithm for computing the knot
Floer homology of a simple knot in a lens space;
 by a theorem of Ni \cite{YiQHS}, this
determines its genus. In \cite{Berge}, Berge describes several
families of simple knots which are shown to have \(S^3\)
surgeries. More recently, Tange \cite{Tange} has found several
additional families of simple knots which have surgeries yielding the
Poincar{\'e} sphere (which is also an L-space). Based on computer
calculations of the genus function, we make the following 

\begin{conjecture}
\label{Conj:Num}
If \(K\) is a simple knot in \(L(p,q)\) which admits an integer
\(\Z\)HS surgery and has  \(g(K) < (p+1)/2\), then \(K\)
belongs to one of the families enumerated by Berge and Tange.
\end{conjecture}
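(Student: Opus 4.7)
\textbf{Proof proposal for Conjecture~\ref{Conj:Num}.}
The plan is to convert the topological hypothesis into a purely number-theoretic condition and then attempt a classification. First, by Theorem~\ref{Thrm:LHSCrit}, the genus bound $g(K) < (p+1)/2 = (|H_1(L(p,q))|+1)/2$ forces any integer $\Z$HS surgery on $K$ to be automatically an L-space homology sphere. Hence, by the main theorem of \cite{OSLens}, the knot Floer homology of the simple knot $K$ must take the very restricted ``staircase'' form characteristic of L-space knots: each nonzero Alexander-graded piece of $\HFK$ has rank one and the supporting Alexander gradings satisfy the usual alternating-sign constraints, with the top grading equal to $g(K)$ by Ni's theorem \cite{YiQHS}.

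Next, I would feed this constraint into the elementary algorithm described in Section~\ref{Sec:Simple} for computing $\HFK$ of a simple knot in terms of the cyclic ordering of the residues $\{iq \pmod p\}$ in the standard genus-one Heegaard diagram. This converts the conjecture into a purely number-theoretic question: classify all triples $(p,q,k)$ with $0 < k < p$ such that the algorithm applied to the simple knot in homology class $[k]$ produces an L-space knot staircase whose top Alexander grading is strictly less than $(p+1)/2$. The classification itself would proceed by analyzing the staircase recursively from its top, where each ``L-space'' step imposes a rigid local condition on consecutive residue differences; one hopes these conditions propagate to force the continued-fraction expansion of $p/q$, together with the marker $k$, into one of the small number of canonical shapes corresponding to the Berge and Tange families.

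The main obstacle is precisely this final classification step. Although computer enumeration verifies the conjecture for small $p$, a theoretical proof that every solution arises from the Berge/Tange parametrizations appears to require a new structural result on continued fractions compatible with the staircase constraint, and no such result is presently available. A potentially more fruitful alternative would be to combine the number-theoretic reduction with an independent topological input, for example an analysis of the dual knot in the resulting $\Z$HS surgery in the spirit of Theorem~\ref{Thrm:Simple}; any rigidity imposed on that dual knot could then be pulled back through the duality to force $K$ itself into Berge/Tange form. Bridging the gap between the computationally verified cases and a full classification is the heart of the conjecture.
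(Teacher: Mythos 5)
The statement you were asked to prove is labeled a \emph{conjecture} in the paper, and the paper does not prove it: it only reformulates it as a number-theoretic question and reports computer verification for all $p \leq 100{,}000$. Your proposal is, in substance, the same reformulation. The paper's Section~\ref{Sec:Simple} defines the function $f_{p,q,k}$ from the residues $iq \bmod p$ and shows that $G(p,q,k)$, the spread of its values, equals $\text{width}\,\HFK(K(p,q,k))$; combined with Ni's theorem this turns the hypothesis $g(K)<(p+1)/2$ into the elementary condition $G(p,q,k)<2p$, and the conjecture becomes: classify all $(p,q,k)$ with $k^2\equiv\pm q\ (p)$ satisfying that inequality. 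You arrive at essentially this reduction, and you correctly and explicitly concede that the final classification step --- showing every solution lies in a Berge or Tange family --- is missing. That step is the entire content of the conjecture, so what you have written is not a proof, and no proof exists in the paper either.

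Two smaller points. First, your invocation of the ``staircase'' structure from \cite{OSLens} is not quite the right mechanism here: a simple knot $K(p,q,k)$ has exactly $p$ generators in its genus-one diagram, one in each $\spinc$ structure, so $\HFK(K)\cong\Z^p$ automatically with no differentials and no alternating-sign constraints to exploit; the only nontrivial condition extracted from the L-space hypothesis is the width bound, i.e.\ the inequality on $G(p,q,k)$. Second, your hoped-for ``rigid local condition on consecutive residue differences propagating to constrain the continued fraction of $p/q$'' is a reasonable guess at how one might attack the number theory, but it is not carried out in the paper or anywhere else, and the paper offers no evidence that such a propagation argument works. So the gap you name at the end is real, it is the whole problem, and your proposal should be read as a restatement of the paper's reduction rather than progress beyond it.
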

\noindent 

By combining Theorem~\ref{Thrm:Simple} with an argument using the
Ozsv{\'a}th-Szab{\'o} \(d\)-invariant, it is not difficult to show
that Conjecture~\ref{Conj:Num} would imply the following {\it
  Realization Conjecture}:

\begin{conjecture}
\label{Conj:Real}
If \(L(p,q)\) is realized by integer surgery on a knot \(K\subset 
S^3\), then it is realized by integer surgery on a Berge knot. 
\end{conjecture}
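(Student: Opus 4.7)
\textbf{Proof proposal for Conjecture~\ref{Conj:Real}.} The plan is to pass from the given knot $K\subset S^3$ with $S^3_p(K)=L(p,q)$ to its dual $\widetilde K\subset L(p,q)$, replace $\widetilde K$ by the simple knot $K'$ homologous to it, and show that $K'$ itself must be a Berge knot; the dual of $K'$ in $S^3$ is then the desired Berge knot realizing $L(p,q)$. Since some integer surgery on $\widetilde K$ recovers the L-space homology sphere $S^3$, Theorem~\ref{Thrm:Simple} applies and guarantees that $K'$ also admits an integer LHS surgery. Assuming we are away from the threshold genus $(p+1)/2$, the theorem moreover supplies $g(K')=g(\widetilde K)$ and an isomorphism $\widehat{HFK}(\widetilde K)\cong\widehat{HFK}(K')$.

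Under this isomorphism, the Ozsv\'ath--Szab\'o integer surgery formula shows that the corresponding integer surgery on $K'$ has the same $d$-invariant as the analogous surgery on $\widetilde K$. The latter is $S^3$, so $d=0$. Now Conjecture~\ref{Conj:Num}, applied to $K'$ (which has $g(K')<(p+1)/2$ and an integer $\Z$HS surgery), forces $K'$ to belong to one of the Berge or Tange families. The Tange families are ruled out because they produce the Poincar\'e homology sphere, whose $d$-invariant is $\pm 2\neq 0$. Therefore $K'$ is a Berge knot, its dual $B\subset S^3$ is a Berge knot, and $S^3_p(B)=L(p,q)$, establishing Conjecture~\ref{Conj:Real} in this case.

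The main obstacle is the threshold case $g(\widetilde K)=(p+1)/2$, where Theorem~\ref{Thrm:Simple} no longer directly supplies the knot Floer isomorphism between $\widetilde K$ and $K'$. Here one has to invoke the more precise description of the threshold promised in the later section of the paper: either that refinement still provides enough Floer data to match the $d$-invariant of the $S^3$-surgery with a surgery on $K'$ (so the argument above carries through), or the structure of knots with $g=(p+1)/2$ and an $\Z$HS surgery is rigid enough that the $d$-invariants of $L(p,q)$ already dictate a specific simple knot realization. In either sub-case the conclusion that some Berge knot realizes $L(p,q)$ should follow, but the bookkeeping with torsion coefficients, combined with the $d$-invariant of $L(p,q)$ computed from $(p,q)$ directly via Ozsv\'ath--Szab\'o, is the technical heart of the argument and is where I expect the real work to lie. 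All other steps are formal consequences of Theorem~\ref{Thrm:Simple}, the surgery formula, and Conjecture~\ref{Conj:Num}.
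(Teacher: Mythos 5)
Your argument in the non-threshold case is essentially the paper's: pass to the dual knot $\widetilde K\subset L(p,q)$, invoke Theorem~\ref{Thrm:Simple} to get an LHS surgery on the homologous simple knot $K'$, use Conjecture~\ref{Conj:Num} to place $K'$ in a Berge or Tange family, and rule out the Tange families by comparing $d$-invariants ($d(K'_1)=d(\widetilde K_1)=d(S^3)=0$, whereas Tange knots yield the Poincar\'e sphere with $d=-2$). That part is correct, modulo the point that matching $d$-invariants requires tracking absolute gradings through the mapping cone (the surgery cobordism maps and $c_1^2$), not just the relatively graded isomorphism $\HFK(\widetilde K)\cong\HFK(K')$; the paper does this via the argument of Proposition~\ref{Prop:dInvt}.

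The genuine gap is the threshold case $g(\widetilde K)=(p+1)/2$, which you explicitly leave open (``should follow,'' ``where I expect the real work to lie''). The paper closes it with a concrete mechanism you did not identify: Proposition~\ref{Prop:dInvt} shows that if $\text{width}\ \HFK(\widetilde K)=2p$ and $\widetilde K_1$ is an LHS, then $d(\widetilde K_1)=d(K'_1)-2$, where $K'$ is the homologous simple knot of width $<2p$. Granting Conjecture~\ref{Conj:Num}, $K'_1$ is either $S^3$ (Berge) or $\overline{\Sigma}$ (Tange, with the orientation pinned down by Tange's non-existence result for positive surgery cobordisms), so $d(\widetilde K_1)\in\{-2,-4\}$ and hence $\widetilde K_1\neq S^3$. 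In other words, the threshold case is not handled by extending the Floer-isomorphism argument to it; it is shown never to occur when the surgery is $S^3$. The proof of Proposition~\ref{Prop:dInvt} itself is nontrivial --- it compares the absolute gradings of the images of $\widehat{F}_{W,\spi_0}$ in the mapping cones $C(K,1)$ and $C(K',1)$, finding a shift of exactly $2$ coming from the extra $\Z^3$ summand of $\HFK$ in $\spi_0$ --- and without it (or some substitute) your proof does not go through.
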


The last step in this program seems considerably 
more difficult. The list of knots which are known to be determined by
their knot Floer homology is rather small: in \(S^3\), the unknot, the
trefoil, and the figure-eight knot are the only known examples. These
three knots are all distinguished by some geometrical property which
is reflected in their knot Floer homology: the unknot is the only knot
of genus zero, while the trefoil and figure-eight knots are the only
fibred knots of genus one. The Berge knots exhibit a similar
geometrical property --- they are genus minimizing in their homology
class. More  generally, we have 

\begin{tm}
\label{Thrm:MinGenus}
Suppose that \(Z\) is an L-space and that
 \(K \subset Z\) is a primitive knot with
 \(g(K)<(|H_1(Z)|+1)/2\). If \(K'\) is another knot
representing the same homology class as \(K\), then \(g(K') \geq
g(K)\). 
\end{tm}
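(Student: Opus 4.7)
The plan is to upgrade the inequality $g(K') \geq g(K)$ to an \emph{equality} in the relevant range, by comparing the knot Floer homologies of $K$ and $K'$ via Theorem~\ref{Thrm:LHSCrit}.

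First, a dichotomy on $g(K')$. If $g(K') \geq (|H_1(Z)|+1)/2$, then $g(K') > g(K)$ by hypothesis and there is nothing to prove. Otherwise $g(K') < (|H_1(Z)|+1)/2$. The knot $K'$ is itself primitive (being homologous to the primitive knot $K$), so some integer surgery on $K'$ is a homology sphere $Y'$, and by Theorem~\ref{Thrm:LHSCrit} this $Y'$ must be an L-space; similarly, integer surgery on $K$ yields an L-space $\Z$HS $Y$.

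Next, apply the Ozsv{\'a}th--Szab{\'o} mapping cone formula to both surgeries. For a primitive knot $K \subset Z$ whose integer surgery is an L-space homology sphere, the L-space condition on both $Z$ and the surgery forces the knot Floer complex $CFK^\infty(Z,K)$ into a staircase shape whose structure is pinned down by the collection of correction terms $\{d(Z,\mathfrak{s})\}$ as $\mathfrak{s}$ ranges over the $\spinc$ structures on $Z$ that lift the surgery $\spinc$ structure on $Y$ via the two-handle cobordism. Since $[K']=[K]$ and the integer framing giving a homology sphere is determined by the homology class, the set of relevant lifts is the same for $K'$. Hence $\widehat{HFK}(Z,K)$ and $\widehat{HFK}(Z,K')$ are isomorphic as bigraded groups. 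By Ni's genus detection theorem \cite{YiQHS}, the genus of a rationally null-homologous knot is read off the largest Alexander grading supporting nonzero $\widehat{HFK}$; the isomorphism therefore yields $g(K) = g(K')$, which is the desired conclusion.

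The main obstacle is the staircase-uniqueness claim. Showing that the mapping cone together with the L-space condition on both ends uniquely pins down $\widehat{HFK}(Z,K)$ from the homology class $[K]$ requires a careful analysis in the rationally null-homologous setting --- essentially the analogue for knots in L-spaces of the fact from \cite{OSLens} that an L-space knot in $S^3$ has knot Floer homology determined by its Alexander polynomial. The technical challenge is the bookkeeping of absolute gradings and of the action of $H^2(Z;\Z)$ on $\spinc$ structures, in order to identify the steps of the staircase with the correct $d$-invariants of $Z$ and to ensure that no information beyond $[K]$ (and the framing forced by $[K]$) enters the computation.
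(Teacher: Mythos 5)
Your proposal has two genuine gaps. First, the opening step is false: primitivity of \(K'\) does not give an integer homology sphere surgery. By Lemma~\ref{Lem:ZHSSurg}, a primitive knot of order \(p\) admits such a surgery only when its self-linking number is \(\pm 1/p \bmod 1\), and Theorem~\ref{Thrm:MinGenus} makes no such assumption on the homology class of \(K\). So for a general primitive \(K\) your argument never gets started, while the statement still has content. Second, even when a \(\Z\)HS surgery exists, the step you flag as ``the main obstacle'' is not mere bookkeeping: the staircase of an L-space knot is pinned down by the \(d\)-invariants of \(Z\) only \emph{relative to} the \(d\)-invariant of the surgered homology sphere \(Y\), and \(d(Y)\) is not a priori a function of \([K]\) alone --- Proposition~\ref{Prop:dInvt} shows that the \(d\)-invariant of the surgery can differ by \(2\) between homologous knots. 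Without an independent way to fix these absolute gradings, ``same lifts, hence same staircase'' does not follow; and the natural way to fix them is to first know the Alexander polynomial, which is circular.

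The paper's route is different and avoids surgeries on \(K\) and \(K'\) entirely. The Fox--Brody theorem says \(i_*(\Delta_K)\) depends only on \([K]\), whence (Corollary~\ref{Cor:ChiDiv}) \(\DBar(K')-\DBar(K)\) is divisible by \((t^p-1)^2\). The proof of Theorem~\ref{Thrm:LHSCrit} shows --- using only that \(Z\) is an L-space and \(\mathrm{width}\ \HFK(K)<2p\), with no surgery hypothesis --- that \(\HFK(K)\cong\Z^p\), so there is no cancellation and the top degree of \(\DBar(K)\) computes \(g(K)\) via Theorem~\ref{Thrm:YiQHS}. Then either \(\DBar(K')=\DBar(K)\), which forces \(g(K')\geq g(K)\) because the degree of \(\chi(\HFK(K'))\) is a lower bound for the top Alexander grading of \(K'\), or the degree of \(\DBar(K')\) is at least \(p\), which forces \(g(K')\geq(p+1)/2>g(K)\). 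If you want to salvage your approach, the missing ingredient is precisely this determination of \(\chi(\HFK)\) by the homology class; it is a torsion statement (Fox--Brody, or Turaev torsion), not a consequence of the mapping cone formula.
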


To achieve the third step, it would be enough to show that if
\(K\) is a simple knot in \(L(p,q)\) with \(g(K)<(p+1)/2\), then \(K\) is
the {\it unique} genus minimizer in its homology class.
Interestingly, a theorem of Baker \cite{Baker} says that this is
true whenever  \(g(K) \leq  (p+1)/4\). As an application of Baker's
theorem, we have

\begin{crrr}
\label{Cor:L4}
If integer surgery on \(K \subset S^3\) yields  
\(L(4n+3,4)\), then \(K\) is the positive \((2,2n+1) \) torus knot. 
\end{crrr}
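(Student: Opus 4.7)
The plan is to pass from $K$ to a simple knot via Theorem~\ref{Thrm:Simple}, and then use Baker's uniqueness theorem to recover $K$. Write $p=4n+3$. Given $K \subset S^3$ with integer surgery yielding $L(p,4)$, let $\widetilde{K} \subset L(p,4)$ be the dual knot, which admits an $S^3$ surgery, and let $K'$ be the simple knot in the same homology class as $\widetilde{K}$. By Theorem~\ref{Thrm:Simple}, $K'$ also admits an integer LHS surgery, and either (a) $g(\widetilde{K}) = (p+1)/2 = 2n+2$, or (b) $g(\widetilde{K}) = g(K')$ and $\HFK(\widetilde{K}) \cong \HFK(K')$.

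The first and most substantive step is to classify the simple knots in $L(4n+3,4)$ that admit an integer $\mathbb{Z}$HS surgery. Using the algorithm of Section~\ref{Sec:Simple} to compute $\HFK$ together with Ni's genus formula, the Floer homology of such a $K'$ is determined by the $d$-invariants of $L(4n+3,4)$. I would then use the $d$-invariant argument sketched in the paper (the ingredient that, combined with Conjecture~\ref{Conj:Num}, yields the Realization Conjecture) to show that only one homology class in $H_1(L(4n+3,4))$ supports such a $K'$, up to orientation reversal --- namely the class of the dual $K_0'$ of the positive $(2,2n+1)$ torus knot. The same calculation should deliver the key inequality $g(K_0') \leq (p+1)/4 = n+1$.

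Case (a) of Theorem~\ref{Thrm:Simple} must still be excluded. The promised refinement of Theorem~\ref{Thrm:LHSCrit} gives a precise description of the Floer homology of the homology sphere that would result from a genus-$(p+1)/2$ knot; I would verify that these data are incompatible with $S^3$ for $L(4n+3,4)$. This leaves case (b), so $K' = K_0'$ (up to orientation) and $g(\widetilde{K}) = g(K_0') \leq (p+1)/4$.

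The conclusion is then immediate. Baker's theorem implies $K_0'$ is the \emph{unique} genus minimizer in its homology class. Since $\widetilde{K}$ is primitive (its surgery is $S^3$) with $g(\widetilde{K}) < (p+1)/2$, Theorem~\ref{Thrm:MinGenus} forces $\widetilde{K}$ to be a genus minimizer in its class as well. Uniqueness gives $\widetilde{K} = K_0'$, and inverting the $S^3$ surgery produces $K = T(2,2n+1)$. The main obstacle is the second paragraph: pinning down the single admissible simple knot in $L(4n+3,4)$ and its genus via the $d$-invariant, which ultimately reduces to a finite and fairly explicit number-theoretic verification in $\mathbb{Z}/(4n+3)$.
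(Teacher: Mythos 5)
Your outline---dualize, pass to the simple knot via Theorem~\ref{Thrm:Simple}, split into the two genus cases, and finish with Baker---matches the paper's, but the two steps you defer are exactly where the content lies, and in both places you reach for the wrong tool. The classification of simple knots in \(L(4n+3,4)\) admitting integer \(\Z\)HS surgeries is not a \(d\)-invariant computation, and it is not a finite verification for each \(p\) (the corollary is for all \(n\)): it is Lemma~\ref{Lem:SimpleHS}, which says \(K(p,q,k)\) has such a surgery if and only if \(k^2\equiv \pm q\ (p)\). For \(q=4\) and \(p=4n+3\equiv 3\ (4)\) the congruence \(k^2\equiv -4\) has no solutions, so \(k\equiv \pm 2\) and \(K'\) is, up to orientation, \(K(4n+3,4,2)\), the dual of \(T(2,2n+1)\), with \(g(K')=n\leq (p+1)/4\). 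This is a uniform self-linking-form argument; nothing about \(\hfhat\) or \(d\)-invariants enters. Likewise, your plan for excluding case (a) cannot work as stated: when \(g(\widetilde{K})=(p+1)/2\) the resulting homology sphere \(Y\) \emph{is} still an L-space (Proposition~\ref{Prop:LSCrit}, condition (2)), so \(\hfhat(Y)\cong\Z\cong\hfhat(S^3)\) and no ``Floer homology data'' distinguish it from \(S^3\). What does the work is the absolute grading, via Proposition~\ref{Prop:dInvt}: \(d(Y)=d(K'_{\mp 1})\pm 2 = d(S^3)\pm 2=\pm 2\neq 0\).

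The endgame also rests on an overstatement. Baker's theorem, as actually used in the paper, says that a knot of genus at most roughly \((p+1)/4\) in \(L(p,q)\) is a \((1,1)\) knot; it does not by itself make the simple knot the unique genus minimizer in its homology class. The paper closes the loop with Berge's theorem that a \((1,1)\) knot in a lens space admitting an \(S^3\) surgery is simple---so the hypothesis that \(\widetilde{K}\) has an \(S^3\) surgery is used again at this stage, whereas your identification of \(\widetilde{K}\) with \(K_0'\) never invokes it. (Your appeal to Theorem~\ref{Thrm:MinGenus} is redundant once Theorem~\ref{Thrm:Simple} gives \(g(\widetilde{K})=g(K')\).) With these substitutions the argument goes through, but as written the proposal's two load-bearing steps are unproved and attributed to mechanisms that would not deliver them.
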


More generally, we can ask the following

\begin{question} Does a simple knot in \(L(p,q)\) minimize genus in
  its homology class? If so, is it the unique minimizer?  If not, what
   is the minimizer? 
\end{question}

The genus of some simple knots is quite large, so it seems rather bold
to imagine that this question has a positive answer. On the other hand,
a brief computer survey of \((1,1)\) knots in lens spaces failed to
produce any examples 
which had genus less than or equal to that of the corresponding
simple knot, so the problem is not without interest.

In a somewhat different direction, one can ask how  many different L-space
homology spheres exist. It is not hard  to see that the manifolds
obtained by repeatedly connected summing the Poincar{\'e} sphere (with
either orientation) with itself are  L-space homology spheres. Along
with \(S^3\), these are the only examples known at present. If
\(K \subset L(p,q)\) is in the same homology class as a Berge knot and
 satisfies \(g(K) = (p+1)/2\), then a theorem of Tange \cite{Tange2} 
shows that
 either \(K\) is a counterexample to the Berge conjecture or the
 manifold obtained by surgery on \(K\) is a new  L-space
 homology sphere. 


The remainder of this paper is organized as
follows. Sections~\ref{Sec:Knots} and \ref{Sec:HFK} are mostly review;
section~\ref{Sec:Knots} discusses the problem of when a knot in a
rational homology sphere admits a homology sphere surgery, and
section~\ref{Sec:HFK} recalls \Ozsvath and Szab{\'o}'s theory of knot
Floer homology for knots in a rational homology sphere \cite{OSFrac}. In
section~\ref{Sec:LHS}, we use the mapping cone formula from
\cite{OSFrac} to prove Theorem~\ref{Thrm:LHSCrit}, 
and in section~\ref{Sec:FoxBrody} we apply a theorem of Fox and Brody
\cite{Brody} to prove Theorems~\ref{Thrm:Simple} and
\ref{Thrm:MinGenus}. Finally, section~\ref{Sec:Simple} describes an
algorithm to compute the genus of a simple knot in a lens space and
gives some numerical evidence for Conjecture~\ref{Conj:Num}. 

The author would like to thank Matt Hedden for sharing his work in
 \cite{HeddenLens}, Motoo Tange for kindly providing a copy
of his preprint \cite{Tange},  and Ken Baker, Nathan Dunfield, Eli Grigsby,
  Zolt{\'a}n Szab{\'o}, and Dylan Thurston for helpful conversations.

\section {Knots in Rational Homology Spheres}
\label{Sec:Knots}

We begin by recalling some basic facts  about knots in rational
homology three-spheres. 
Suppose \(K \subset Z\) is an oriented knot in an oriented
rational homology sphere,
and let  \(Z_0\subset Z\) be the complement of a regular neighborhood
of \(K\). The orientation on \(K\) determines an oriented meridian
\(m \in H_1(\partial Z_0)\). We also choose an oriented longitude
\(\ell \in H_1(Z_0)\) with the property that \(\ell \cdot m = 1\) with
respect to the orientation on \(\partial Z_0 \) induced by \(Z_0\). 

\(Z_0\) is a three-manifold with torus boundary, so there is an
essential curve in \(\partial Z_0\) which bounds in \(Z_0\). Let 
\(\alpha = am + p \ell \in H_1(\partial Z_0)\) be a primitive homology
class represented by such a curve, oriented so that \(p > 0 \). 
\begin{defn}
The genus \(g(K)\) is the minimal genus of a properly embedded
orientable surface \(\Sigma \subset Z_0\) with \([\partial \Sigma] =
\alpha\). 
\end{defn}
\noindent
When \(K\) is null-homologous, this is just
the usual definition of the Seifert genus of \(K\). 

The
number \(p\) is well-defined and is the {\it order} of \(K\) in
\(H_1(Z_0)\). Replacing \(\ell\) by \(\ell+m\) has the effect of
replacing \(a\) by \(a-p\), so the value of
\(a\) mod \(p\) is also an invariant of \(K\). 
The quantity \(a/p \ \text{mod} \ 1\) is  the {\it
  self-linking number} \(K\cdot K\) of \(K\). More geometrically, it
may be defined as follows: the class \(p[K]\) is null-homologous, so
it bounds a Seifert surface \(\Sigma \subset Z\) with \(\Sigma \cap
\partial Z_0 = \alpha\). Then \(K \cdot K =
(\ell \cdot \Sigma)/p = \ell \cdot \alpha/p = a/p\). 
From this definition, it is not difficult to see that
the self-linking number depends only on the homology class of \(K\),
and that it is quadratic: \([nK] \cdot [nK] \equiv n^2 [K] \cdot [K] \
\text{mod} \ 1.\) 

An {\it integer surgery} on \(K\) is a manifold \(Z'\) obtained by Dehn
filling \(Z_0\) along the curve \(km + \ell\) for some \(k \in
\Z\). More geometrically, \(Z'\) is obtained by integer surgery on
\(K\) if and only if there is a cobordism with boundary \(-Z \cup
Z'\) obtained by attaching a two-handle to \(Z\times I \) along
\(K\). From this point of view, it is clear that the relation of being
an integer surgery is symmetric: if \(Z'\) is obtained by integer
surgery on \(K \subset Z\), then \(Z\) is obtained by integer surgery
on the dual knot \(\widetilde{K} \subset Z'\), where \(\widetilde{K}\) is the belt sphere of the
original two-handle. 

\begin{lem}
\label{Lem:ZHSSurg}
Let \(K \subset Z\) be a knot in a rational homology sphere. 
Then \(K\) has an integer sugery
which is a homology sphere if and only if \([K]\) generates \(H_1(Z)\)
(so that \(H_1(Z) \cong \Z/p\) for some \(p\))  
and its self-linking number \(a/p\) 
is congruent to \(\pm 1/p\) mod \(1\).  
\end{lem}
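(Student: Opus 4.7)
The plan is to translate the question into a computation in the abelian group $A = H_1(Z_0)$, using the images $\mu = [m]_0$ and $\lambda = [\ell]_0$ of the meridian and longitude. Mayer--Vietoris applied to $Z = Z_0 \cup N(K)$ gives a short exact sequence $0 \to H_1(\partial Z_0) \to A \oplus H_1(N(K)) \to H_1(Z) \to 0$ (using $H_2(Z) = 0$), from which one reads off that $H_1(Z) \cong A/\langle\mu\rangle$ and that $\mu$ has infinite order in $A$. The same sequence for the surgered manifold $Z'$, obtained by gluing in a solid torus $N'$ so that $km + \ell$ bounds a disk, yields $H_1(Z') \cong A/\langle k\mu + \lambda\rangle$. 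The hypothesis that $\alpha = am + p\ell$ bounds in $Z_0$ contributes the single relation $a\mu + p\lambda = 0$ in $A$.

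For the ``only if'' direction, suppose some integer surgery $Z'$ has $H_1(Z') = 0$, so that $A$ is generated by the single element $k\mu + \lambda$. A cyclic group containing an element of infinite order is isomorphic to $\Z$, so $A \cong \Z$. Writing $\mu = u$ and $\lambda = v$ in this identification with $ku + v = \pm 1$, the relation $au + pv = 0$ together with $\gcd(u,v) = 1$ (forced by $ku+v = \pm 1$) shows that $H_1(Z) = \Z/u$ and that $[K] = v \bmod u$ generates it and has order $|u|$. Since by hypothesis the order of $[K]$ is $p$, we conclude $|u| = p$; after normalising $u = p$, the relations become $v = -a$ and $kp - a = \pm 1$, i.e.\ $a \equiv \pm 1 \pmod p$.

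For the converse, assume $H_1(Z) \cong \Z/p$ is generated by $[K]$ and write $a = \varepsilon + np$ with $\varepsilon \in \{\pm 1\}$. The main obstacle is that $A$ can carry torsion in general --- already $A \cong \Z \oplus H_1(Z)$ when $K$ is null-homologous in a non-trivial rational homology sphere --- so I must first show $A \cong \Z$ under these hypotheses. Set $\xi = n\mu + \lambda$; then $a\mu + p\lambda = 0$ rewrites as $\mu = -\varepsilon p\xi$ and $\lambda = \varepsilon a\xi$, placing $\mu$ and $\lambda$ in the infinite cyclic subgroup $\langle\xi\rangle \subseteq A$. The inclusion $\langle\xi\rangle \hookrightarrow A$ descends to an injection $\langle\xi\rangle/\langle\mu\rangle \hookrightarrow A/\langle\mu\rangle$ between two cyclic groups of order $p$, hence an isomorphism, which forces $A = \langle\xi\rangle \cong \Z$. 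Taking $k = n$ then gives $k\mu + \lambda = (a - kp)\xi = \varepsilon\xi$, a generator of $A$, so the $k$-surgery has trivial $H_1$.
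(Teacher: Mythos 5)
Your argument is correct. It shares the paper's skeleton --- Mayer--Vietoris applied to the two decompositions $Z=Z_0\cup N(K)$ and $Z'=Z_0\cup N'$ --- but it handles the key intermediate step differently. The paper first proves that $H_1(Z_0)\cong\Z$ and that it is generated by the images of $m$ and $\ell$ (using the long exact sequence of the pair $(Z_0,\partial Z_0)$, duality, and the universal coefficient theorem), and then reduces both directions of the equivalence to the condition $\det\left[\begin{smallmatrix}-1&k\\-p&a\end{smallmatrix}\right]=\pm1$. You instead present $H_1(Z)$ and $H_1(Z')$ as the quotients $A/\langle\mu\rangle$ and $A/\langle k\mu+\lambda\rangle$ of $A=H_1(Z_0)$ and never need to know that $\mu,\lambda$ generate $A$: in the forward direction, cyclicity of $A$ together with the infinite-order element $\mu$ gives $A\cong\Z$ for free, and in the converse your counting argument with $\xi=n\mu+\lambda$ (an injection $\langle\xi\rangle/\langle\mu\rangle\hookrightarrow A/\langle\mu\rangle$ between groups of order $p$ must be onto, so $A=\langle\xi\rangle$) disposes of any possible torsion in $A$. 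This is a slightly more self-contained route, avoiding the pair sequence entirely; its one external input is that the order of $[K]$ in $H_1(Z)$ equals the $\ell$-coefficient $p$ of the bounding class $\alpha$, which the paper asserts in its setup (and which secretly encodes the same ``half lives, half dies'' information that the paper's pair sequence supplies, namely that the kernel of $H_1(\partial Z_0)\to H_1(Z_0)$ is exactly $\Z\alpha$). Two harmless slips: $k\mu+\lambda$ equals $\varepsilon(a-kp)\xi$ rather than $(a-kp)\xi$, and $H_1(Z)\cong\Z/|u|$ rather than $\Z/u$; in each case only a sign is off and only the subgroup generated matters.
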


\begin{proof}
Consider the Mayer-Vietoris sequence for the decomposition \(Z=Z_0
\cup S^1\times D^2\):
\begin{equation*}
\begin{CD}
0 @>>> H_1(T^2) @>>> H_1(S^1) \oplus H_1(Z_0) @>>> H_1(Z) @>>>0
\end{CD}
\end{equation*}
and for the decomposition \(Z' = Z_0 \cup S^1 \times D^2\):
\begin{equation*}
\begin{CD}
@>>> H_2(Z') @>>> H_1(T^2) @>>> H_1(S^1) \oplus H_1(Z_0) @>>> H_1(Z') @>>>0.
\end{CD}
\end{equation*}
If \(Z'\) is a homology sphere, the second sequence tells us that 
\(H_1(T^2) \cong H_1(S^1) \oplus H_1(Z_0)\), so \(H_1(Z_0) \cong
\Z\). The same sequence also tells us that \(H_2(Z_0) \cong 0\). 
Next, we consider the long exact sequence of the pair \((Z_0, \partial
Z_0)\):
\begin{equation*}
\begin{CD}
@>>> H_2(Z_0,\partial Z_0) @>>> H_1(\partial Z_0) @>>> H_1(Z_0) @>>>
H_1(Z_0,\partial Z_0) @>>>0.
\end{CD}
\end{equation*}
The last group in the sequence is isomorphic to \(H^2(Z_0)\), which
vanishes by the universal coefficient theorem. It follows that \(H_1(
\partial Z_0)\) surjects onto \(H_1(Z_0)\), so the latter group is
generated by the images of \(m\) and \(\ell\). 

Returning to the first sequence, we consider the maps \(H_1(S^1) \to
H_1(Z)\) and \(H_1(Z_0) \to H_1(Z)\). The image of \(H_1(S^1)\) is
clearly generated by \([K]\), while the image of \(H_1(Z_0)\) is
generated by the image of \(m\), which is trivial, and the image of
\(\ell\), which is \([K]\). Since \(H_1(S^1) \oplus H_1(Z_0)\)
surjects onto \(H_1(Z)\), we conclude that \([K]\)  generates
\(H_1(Z)\). 

Conversely, suppose that \([K]\) generates \(H_1(Z)\). Then in the
first sequence, 
 \(H_1(S^1)\) surjects onto \(H_1(Z)\). From
 this, it is easy to see that  \(H_1(Z_0)\) must be torsion free, and
 thus isomorphic to \(  \Z\).
 
We now consider the map \(H_1(T^2) \to H_1(S^1\times D^2)
 \oplus H_1(Z_0)\) in
the second sequence. Let \(\beta = km + \ell\) be the image of \(\partial D^2\)
in \(H_1(T^2)\). Then the map \(H_1(T^2) \to H_1(S^1 \times D^2) \cong
\Z\)
 is given by \(x \mapsto x \cdot \beta\). Similarly, the map 
\(H_1(T^2) \to H_1(Z_0) \cong \Z \) is given by \(x \mapsto x \cdot \alpha\),
 where \(\alpha = am + p \ell\). Thus with respect to the basis
 \((m,\ell)\) on \(H_1(T^2)\), the map \( H_1(T^2) \to H_1(S^1) \oplus
 H_1(Z_0)\) is given by the matrix
\begin{equation*}
A=
\begin{bmatrix} -1 & k \\ -p & a
\end{bmatrix}
\end{equation*}
In order for the map to be an isomorphism, we must choose \(k\) so
that \(\det A = \pm 1\), which is possible if and only if \( a \equiv
\pm 1 \ \text{mod} \ p.\)
\end{proof}

If \(K \subset Z\) generates \(H_1(Z) \cong \Z/p\), we say that \(K\)
is a {\it primitive knot of order \(p\)} in \(Z\). 

\begin{lem} 
\label{Lem:LinkingForm}
Suppose \(K \subset Z\) is a primitive knot of order \(p\)
 in a rational
  homology sphere. Then the self-linking number \(a/p\) of \(K\) is
  characterized by
\begin{enumerate}
\item The set of manifolds obtained by integer surgery on \(K\) can be
  identified with the set of integers \(m \equiv -a \ (p)\), where the
  manifold \(K_m\) corresponding to \(m\) has \(H_1(K_m) \cong
  \Z/m\). 
\item Let \(x\) be a generator of \(H_1(Z_0)\cong \Z\), and let
  \(i_*(x)\) be its image in \(H_1(Z)\). Then \([K]= a x\).   
\end{enumerate}
\end{lem}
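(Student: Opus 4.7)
The plan is to continue the Mayer--Vietoris computation from the proof of Lemma~\ref{Lem:ZHSSurg} one step further. That proof established, under the assumption that \([K]\) generates \(H_1(Z)\), that \(H_1(Z_0)\cong\Z\), exhibited as a quotient of \(H_1(\partial Z_0)\) by the rank-one subgroup generated by the primitive class \(\alpha = am + p\ell\) (primitive since \(\gcd(a,p)=1\)).

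First I would fix a generator \(x\) of \(H_1(Z_0)\) and record the images \(\bar m\) and \(\bar\ell\) of the meridian and longitude in \(H_1(Z_0)\). Using B\'ezout's identity to solve \(ua + vp = 1\), the pair \((\alpha,\, e_2 = -vm + u\ell)\) is a \(\Z\)-basis of \(H_1(\partial Z_0)\); taking \(x\) to be the image of \(e_2\), a direct computation in this basis yields \(\bar m = -p\,x\) and \(\bar\ell = a\,x\).

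With this in hand, both statements follow quickly. For (1), integer surgery with slope \(km + \ell\) produces a manifold \(Z'_k\) whose first homology is obtained from \(H_1(Z_0)\) by killing the image of the filling curve, \(k\bar m + \bar\ell = (a-kp)x\), so \(H_1(Z'_k)\cong\Z/(a-kp)\). Setting \(m = kp - a\), the values of \(m\) run over every integer congruent to \(-a\) mod \(p\) as \(k\) varies over \(\Z\), and by construction \(H_1(K_m)\cong\Z/m\). For (2), a longitude \(\ell\), viewed in \(Z\), is isotopic to the core \(K\) of the attached solid torus, so
\[
[K] = i_*(\bar\ell) = a\,i_*(x) \in H_1(Z),
\]
which is the assertion.

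The main --- and really only --- obstacle is sign bookkeeping: matching the orientation conventions set up in the preceding paragraph (the normalization \(p>0\), the constraint \(\ell\cdot m = 1\), and the ambiguity \(\ell\mapsto\ell+m\) that shifts \(a\) by \(p\)) so that the residue class in (1) comes out as \(-a\) rather than \(+a\) and the sign of the coefficient in (2) agrees with the chosen generator \(x\). Once the B\'ezout-based change of basis is in place, there is no geometric difficulty; the calculation is essentially forced.
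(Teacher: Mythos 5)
Your proposal is correct and follows essentially the same route as the paper: both parts are read off from the Mayer--Vietoris computation already carried out in the proof of Lemma~\ref{Lem:ZHSSurg}, by identifying \(H_1(Z_0)\cong\Z\) as the quotient of \(H_1(\partial Z_0)\) by \(\alpha\) and computing the images of \(m\) and \(\ell\) there (the paper does this via the intersection pairing \(y\mapsto y\cdot\alpha\), which gives \(\bar m=-p\,x\), \(\bar\ell=a\,x\) directly, rather than via an explicit B\'ezout basis). The extra detail you supply is consistent with the paper's terser argument.
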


\begin{proof}
The first part follows easily from the proof of
Lemma~\ref{Lem:ZHSSurg}. For the second part, note that 
\(K\) is homologous to \(\ell\) in \(Z\). The image of \(\ell\) in
\(H_1(Z_0) \cong \Z\) is given by \(\ell \cdot \alpha = a\). 
\end{proof}

\subsection {Simple knots in lens spaces} 
\label{SubSec:SimpleKnot}
We now describe a family of
examples which will be particularly important in what follows.  The lens
space  \(L(p,q)\) can be decomposed as 
\(S^1 \times D^2_\alpha \cup S^1 \times D^2_\beta\),
so that if \(\alpha \subset T^2\) is the boundary of \(D^2_\alpha\)
and  \(\beta\) is the boundary of \(D^2_\beta\),
 there is a fundamental domain for \(T^2\) in which \(\alpha\)
is horizontal and \(\beta\) has slope \(p/q\). This decomposition
naturally gives rise to a Heegaard diagram for \(L(p,q)\), as
illustrated in Figure~\ref{Fig:SimpleKnot}. We orient \(L(p,q)\) so
that the orientation on \(S^1 \times D^2_\alpha\) is the
standard one, and the orientation on the other solid torus
 is reversed. (Note that with this convention, 
\(L(p,q)\)  is \(+p/q\) surgery on the unknot; this agrees with the
convention used by \Ozsvath and \Szabo, but is the opposite of the one
used in \cite{GompfStip}.)
\begin{figure}
\includegraphics{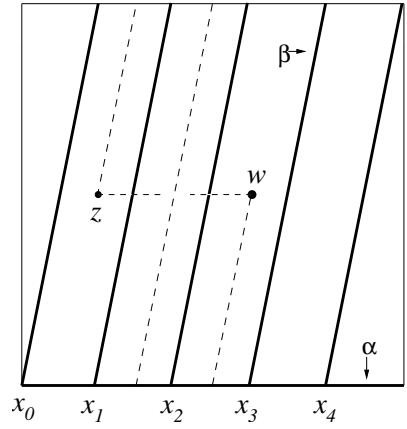}
\caption{\label{Fig:SimpleKnot} A Heegaard diagram for the lens
  space \(L(5,1)\), and the knot \(K(5,1,2)\) within it. The
  horizontal line segment has been pushed slightly into the alpha handlebody.}
\end{figure}

The disks \(A = \{0\} \times D_\alpha\) and \(B = \{0\} \times
D_\beta\) intersect at \(p\) points along their boundaries; these are
the places where \(\alpha\) intersects \(\beta\) in the Heegaard
diagram. We label these points \(x_0,x_1,\ldots,x_{p-1}\) in order of
their appearance on \(\alpha\), as shown in
Figure~\ref{Fig:SimpleKnot}. 

\begin{defn} \cite{Berge} The simple knot \( K(p,q,k) \subset L(p,q)\)
  is the oriented knot which is the union of an arc
  joining \(x_0\) to \(x_k\) in \(A\) with an arc joining \(x_k\) to \(x_0\)
  in \(B\). 
\end{defn}
\noindent
In the above definition, it is most convenient to take \(p>0\), and
view \(q\) and \(k\) as elements of \(\Z/p\). Note that 
by translating the fundamental domain of the Heegaard torus, we
 could just as well have used \(x_i\) and \(x_{i+k}\), for any \(i \in \Z/p\). 

To draw \(K(p,q,k)\) in the Heegaard diagram, we replace
the disks \(A\) and \(B\) by translates \(A' = \{x\} \times D_\alpha\)
and \(B' = \{y\} \times D_\beta\), so that \(x_{i}\) and
\(x_{i+k}\) are replaced by translates \({z} = x_i'\) and
\({w} = x_{i+k}'\). To get the knot, we join \({z}\) to \({
  w}\) by a horizontal segment in \(S^1 \times D^2_{\alpha}\) and
\({w}\) to \({z}\) by a segment of slope \(p/q\) in 
\(S^1 \times D^2_{\beta}\), as illustrated in
Figure~\ref{Fig:SimpleKnot}. Equivalently, as described in 
\cite{OS7} the knot \(K(p,q,k)\) is derived
from the doubly-pointed Heegaard diagram \((T^2, \alpha, \beta, {z},
{w})\).

\begin{lem}
We have the following relations among the \(K(p,q,k)\): 
\begin{enumerate}
\item \(K(p,q,-k)\) is the orientation-reverse of \(K(p,q,k)\).
\item \(K(p,-q,-k)\) is the mirror image of \(K(p,q,k)\) in
  \(\overline{L(p,q)} = L(p,-q)\). 
\item \(K(p,q,k) \cong K(p,q',kq')\), where \( qq' \equiv 1 \ (p)\).  
\end{enumerate}
\end{lem}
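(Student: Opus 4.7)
The plan is to verify each part by identifying a concrete symmetry of the doubly-pointed Heegaard diagram $(T^2, \alpha, \beta, z, w)$ from Figure~\ref{Fig:SimpleKnot} and tracking how it transforms the data $(p, q, k)$.

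For (1), I would exploit the translation symmetry of the labeling noted just before the lemma: the knot could just as well have been defined using endpoints $x_i$ and $x_{i+k}$ for any $i \in \Z/p$. Reversing the orientation of $K(p,q,k)$ replaces it with the arc from $x_k$ to $x_0$ in $A$ together with the arc from $x_0$ to $x_k$ in $B$. Relabeling by $\tilde{x}_j := x_{j+k}$, the endpoints become $\tilde{x}_0 = x_k$ and $\tilde{x}_{-k} = x_0$, so the reversed knot is $K(p, q, -k)$ by definition.

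For (2), I realize the mirror as the result of reversing the ambient orientation of $L(p,q)$, implemented by an orientation-reversing self-map of the Heegaard torus, for example $(x, y) \mapsto (-x, y)$. This map preserves $\alpha$ as the horizontal circle but reverses its orientation, and sends the $\beta$-curve of slope $p/q$ to one of slope $-p/q$, giving the standard Heegaard diagram for $L(p,-q)$. The orientation-reversal on $\alpha$ reverses the cyclic order of the $x_i$, so the point $x_k$ is relabeled as $x_{-k}$. The two arcs comprising $K(p,q,k)$ are carried to arcs in the new $A$ and $B$ with endpoints $x_0$ and $x_{-k}$, yielding $K(p,-q,-k)$.

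For (3), the key topological input is the standard identification of the lens space obtained by swapping the $\alpha$- and $\beta$-handlebodies of the genus-one splitting of $L(p,q)$ with $L(p, q')$, where $qq' \equiv 1 \pmod{p}$. Since the simple knot is defined symmetrically by arcs in $A$ and $B$, it is unchanged as a subset of the manifold under the swap; only the labeling of the intersection points changes, since the new ordering runs along the old $\beta$. Parametrizing $\beta$ as the line $t \mapsto (qt, pt)$ modulo $\Z^2$, one reads off that the $j$-th intersection point along $\beta$ has old label $x_{jq \bmod p}$; inverting, the old label $i$ becomes the new label $iq' \bmod p$. Applied to the endpoints $x_0, x_k$, this produces new labels $x_0, x_{kq'}$, so the same knot is $K(p, q', kq')$ in the new diagram. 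The main obstacle is sign and orientation bookkeeping — the paper's convention that one of the two handlebodies of $L(p,q)$ is taken with reversed orientation must be accounted for in (2), and in (3) one must confirm that the relabeling is multiplication by $q^{-1}$ rather than $q$. Both checks are straightforward from a careful picture but deserve attention.
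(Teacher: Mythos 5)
Your proposal is correct and follows essentially the same route as the paper: parts (1) and (2) are handled by the elementary relabeling/orientation-reversal arguments the paper leaves to the reader, and for part (3) you use exactly the paper's device of swapping the roles of $\alpha$ and $\beta$ and observing that the point $x_k$ sits in position $kq'$ along the original $\beta$-curve. No gaps.
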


\begin{proof}
The first two identifications are elementary. For the third, observe
that the identification \(L(p,q) \cong L(p,q^{-1})\) can be obtained by 
exchanging  the roles of \(\alpha\) and \(\beta\) in the Heegaard
diagram. As we travel along
the (original) beta curve, we encounter the \(x_{i}\)'s in the
following order: \(x_0,x_{q},x_{2q},\ldots x_{(p-1)q}\). The point
\(x_k\) is in the \(q'k\)-th position in this list. 
\end{proof}

We would like to know when the knot \(K=K(p,q,k)\) admits a homology
sphere surgery. To determine its homology class, note that 
\(K\) is homotopic to an immersed curve in the Heegaard torus. The
 image of \([K]\) in \(H_1(S^1 \times D^2_{\beta})\) is given by
\([K] \cdot \beta = k\), so  \([K]= k [b]\), where \(b =
S^1 \times \{0\} \subset S^1 \times D_{\beta}\) is the core curve of
the beta handlebody. Thus \([K]\) generates \(H_1(L(p,q))\)
precisely when \(k\) is relatively prime to \(p\). 

To compute the self-linking number of \(K\), we observe that \([b]
\cdot [b] \equiv q'/p \ \text{mod} \ 1\). 
Thus \(K \cdot K \equiv k^2 q'/p \ \text{mod} \ 1\). Now if
\(k^2q' \equiv \pm 1 \ (p)\), then \(k\) must be relatively prime to
\(p\), so \(K(p,q,k)\) is a primitive knot in \(L(p,q)\). 
In summary, we have proved

\begin{lem}
\label{Lem:SimpleHS}
The knot \(K(p,q,k)\) has an integer surgery which is a homology
sphere if and only if  \(k^2 \equiv \pm q \ (p)\). 
\end{lem}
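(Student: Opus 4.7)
The plan is to apply Lemma \ref{Lem:ZHSSurg} directly, which reduces the existence of an integer homology sphere surgery on $K = K(p,q,k)$ to two conditions: (i) $[K]$ generates $H_1(L(p,q))$, and (ii) the self-linking number $K \cdot K$ is congruent to $\pm 1/p \pmod 1$. The paragraph immediately preceding the statement has essentially assembled all the needed ingredients, so the proof amounts to repackaging those calculations as a single number-theoretic criterion.

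First, recall from the discussion above that $[K(p,q,k)] = k [b] \in H_1(L(p,q)) \cong \Z/p$, where $b$ is the core of the beta handlebody and $[b]$ generates $H_1(L(p,q))$. Hence condition (i) is equivalent to $\gcd(k,p)=1$. Second, I would use the quadratic property of the self-linking form together with $[b] \cdot [b] \equiv q'/p \pmod 1$ (where $qq' \equiv 1 \pmod p$) to conclude
\[
K \cdot K \equiv k^2 \cdot [b] \cdot [b] \equiv \frac{k^2 q'}{p} \pmod 1,
\]
so condition (ii) becomes $k^2 q' \equiv \pm 1 \pmod p$. Multiplying this congruence through by $q$ and using $qq' \equiv 1 \pmod p$ rewrites it as $k^2 \equiv \pm q \pmod p$, the form stated in the lemma.

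Finally I would observe that condition (i) is already implied by this congruence: since $\gcd(q,p) = 1$ (as $L(p,q)$ is a genuine lens space), any $k$ with $k^2 \equiv \pm q \pmod p$ automatically satisfies $\gcd(k,p) = 1$. Thus $k^2 \equiv \pm q \pmod p$ alone is necessary and sufficient. There is really no obstacle to overcome here; the only minor point to be careful about is the direction of the conversion between $k^2 q' \equiv \pm 1$ and $k^2 \equiv \pm q \pmod p$, which is a one-line manipulation.
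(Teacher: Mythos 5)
Your proposal is correct and follows exactly the paper's route: the lemma is stated as a summary of the preceding paragraph, which applies Lemma~\ref{Lem:ZHSSurg} together with \([K]=k[b]\), \([b]\cdot[b]\equiv q'/p\), and the quadraticity of the self-linking number, then notes that \(k^2q'\equiv\pm1\ (p)\) already forces \(\gcd(k,p)=1\). Your translation of \(k^2q'\equiv\pm1\) into \(k^2\equiv\pm q\ (p)\) is the same one-line step the paper performs implicitly.
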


\section{Knot Floer homology}
\label{Sec:HFK}
In this section, we  briefly review the theory of knot Floer homology for
rationally null-homologous knots, as developed by \Ozsvath and
\Szabo in \cite{OSFrac}. With the exception of
Proposition~\ref{Prop:Chi}, all of this material may be found in
\cite{OSFrac} ({\it c.f} \cite{OS7}, \cite{thesis}.) 
  To keep things simple, we will focus on
the case where \(K\) is a primitive knot of order \(p\)
in a rational homology sphere \(Z\).

\subsection{Heegaard diagrams}
\label{SubSec:Diagrams}
Any knot \(K \subset Z\) can be represented by a {\it doubly pointed
Heegaard diagram} \((\Sigma, \alphas, \betas, {z}, {
  w})\), as illustrated in Figure~\ref{Fig:SimpleKnot}b. Here
\(\Sigma\) is a surface of genus \(g\), and \({\alphas} =
\{\alpha_1,\ldots, \alpha_g\}\) and 
\({\betas} = \{\beta_1,\ldots, \beta_g\}\) are two sets of {\it
  attaching circles} on \(\Sigma\). In other words, \(\alpha_1,\alpha_2,
\ldots \alpha_g\) are  embedded, disjoint, simple closed curves on
\(\Sigma\) which are linearly independent in \(H_1(\Sigma)\), and
similarly for the \(\beta_i\). The triple
 \((\Sigma, {\alphas}, {\betas})\) is a Heegaard diagram for \(Z\), {\it
   i.e.} \(\Sigma\) is a Heegaard surface for \(Z\) so that the
 \(\alpha_i\)'s bound compressing disks in one handlebody bounding
 \(\Sigma\), and the 
\(\beta_i\)'s bound compressing disks in the other.

 The knot \(K\) is specified by the two {\it basepoints} \({ z}\)
 and \({w}\) in \(\Sigma - {\alphas } - {\betas}\) by the
 following rule: we join \({ z}\) to \({ w}\) by an arc  in \(
 \Sigma\) which is disjoint from \({\alphas}\) and push it slightly
 into the alpha handlebody, Similarly, we join \({w}\) to \({
   z}\) by an arc in \(\Sigma\) which is disjoint from \({\betas}\)
 and push this arc slightly into the beta handlebody. 
 \(K\) is the union of these two arcs.
 
 Given such a doubly-pointed diagram, we can construct a Heegaard
 diagram for the complement of a regular neighborhood of \(K\) as
 follows. First, we remove small neighborhoods of \({ z}\) and
 \({ w}\) from \(\Sigma\). We then join the resulting boundaries by a
 tube to form a new surface \(\Sigma'\) of genus \(g+1\). 
 Finally, we  add an additional alpha circle \(\alpha_{g+1}\), which runs
 from \( z\) to \( w\) in \(\Sigma\), and then back over the
 tube. In the new diagram, the meridian of the knot \(K\) is
 represented by a small circle linking the tube. This process is
 illustrated for the knot \(K(5,1,2)\) in
 Figure~\ref{Fig:KnotComplement}.

 \begin{figure}
\includegraphics{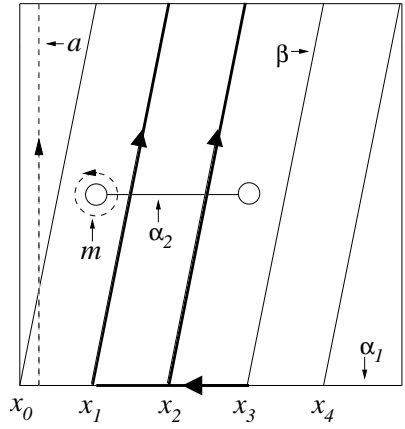}
 \caption{\label{Fig:KnotComplement} A genus two Heegaard diagram for
  the complement of \(K(5,1,2)\). The heavy
  lines show the one-chain \(\eta(x_3,x_1)\).}
 \end{figure}
 
 \subsection{Generators}
Given a doubly pointed Heegaard diagram \((\Sigma, {
  \alphas}, { \betas}, {z}, {w})\) which represents \(K\),
\Ozsvath and \Szabo construct a filtered chain complex
\(\widehat{CF}(K)\). This complex depends on the doubly pointed
Heegaard diagram,   but its filtered chain homotopy type is an 
invariant of \(K\). 

The generators of \(\CF(K)\) are easy to describe;
they consist of unordered \(g\)-tuples of intersection points \({\bf
  x} = \{x_1,x_2, \ldots x_g \}\) between the alpha and beta
curves, such that each alpha and beta curve is represented exactly
once. To be precise, each \(x_i\) is in \(\alpha_j \cap \beta_k\) for
some \(j\) and \(k\), and each \(\alpha_j\) and \(\beta_k\) contains
exactly one \(x_i\). More geometrically, the generators correspond to
the intersection points of two half-dimensional tori
\(\mathbb{T}_\alpha\), \(\mathbb{T}_\beta\) in the symmetric product
\(\text{Sym}^g \Sigma\). For this reason, the set of generators is
usually denoted by \(\mathbb{T}_\alpha \cap \mathbb{T}_\beta\). Each
generator has a \(\Z/2\) valued {\it homological grading}, which is
given by the sign of the corresponding intersection between
\(\mathbb{T}_\alpha\) and \(\mathbb{T}_\beta\).
\vskip0.05in
\noindent {\bf Example:} The simple knot  \(K(p,q,k)\) can be
represented by a doubly pointed diagram of genus one, as described in
section~\ref{SubSec:SimpleKnot}. With respect to this diagram, the
generators of \(\CF(K(p,q,k))\) are just the \(p\) intersection
points \(x_0,x_1,\ldots,x_{p-1}\) between \(\alpha\) and
\(\beta\). All of these intersection points have the same sign. 

\subsection{$\spinc$ structures  and the Alexander grading} In order
to describe
the differential on \(\CF(K)\), we must introduce some more notation.
The alpha and beta curves define a cellulation of \(\Sigma\). The
vertices of this cellulation
 are the intersection points \(\alpha_j \cap \beta_k\),
the one-cells are arcs on the \(\alpha_j\) and \(\beta_k\), and the
two-cells are the components of \(\Sigma - {\alphas} - {
\betas}\). 

Given two generators \({\bf x}\) and \(\bf y\), we can
construct a one-chain \(\eta(\x,\y)\) by going from
points in \({\bf x}\) to points in \({\bf y}\) along the alpha
curves, and then from points in \({\bf y}\) back to points in \({\bf
  x}\) along the beta curves. We can change 
\(\eta(\x,\y)\) by
adding  copies of the \(\alpha_i\)'s and
\(\beta_j\)'s to it, but it has a well-defined image \(\epsilon({\bf
    x}, {\bf y})\) in 
\( H_1(\Sigma)/ \langle \alpha_1,
   \dots, \alpha_g , \beta _1, \ldots, \beta_g \rangle
\cong H_1(Z).\)
This {\it\(\epsilon\)-grading} is additive, in the sense that 
\begin{equation*}
\epsilon({\bf x}, {\bf y}) + \epsilon({\bf y},{\bf z}) = \epsilon({\bf
  x} ,{\bf z}).
\end{equation*}
We define an equivalence relation on the set of generators by setting
\({\bf x} \sim {\bf y}\) if \(\epsilon({\bf x},{\bf y}) = 0\). The
set of equivalence classes is an affine set isomorphic to \(H_1(Z)\). 

If  we fix a basepoint \(q \in \Sigma- \alphas - \betas\), the set of
equivalence classes can naturally be identified with the set of
\(\spinc\) structures on \(Z\). We write \(\spi_q(\bfx)\) to denote the
\(\spinc\) structure determined by the pair \((q, \bfx)\).
 Varying \(q\)
 changes  \(\spi_q(\bfx)\)  according to the formula
\begin{equation}
\label{Eq:Spinc}
\spi_{q_1}(\bfx) - \spi_{q_2}(\bfx) = [K_{q_1,q_2}],
\end{equation}
where \(K_{q_1,q_2}\) is the oriented knot determined by the pair of
basepoints \((q_1,q_2)\). 

In the presence of a knot, we can define an enhancement of the
\(\epsilon\)-grading known as the {\it Alexander grading}. To do this,
we consider the same one-chain \(\eta({\bf x}, {\bf y})\), but in the
Heegaard diagram for the knot complement. The image of 
\(\eta({\bf x}, {\bf y})\) defines a well defined element
\begin{equation*}
A({\bf x},\y) \in H_1(\Sigma')/\langle \alpha_1,\ldots,
\alpha_g,\alpha_{g+1},\beta_1,\ldots, \beta_g\rangle \cong H_1(Z-K). 
\end{equation*}
Like the \(\epsilon\)--grading, the Alexander grading is an additive
function. It reduces to the \(\epsilon\)--grading under the
homomorphism \(H_1(Z-K) \to H_1(Z)\). If  \(K\) is a
primitive knot of order \(p\) (so  \(H_1(Z-K) \cong \Z\)), this means that 
two generators
belong to the same \(\spinc\) structure if and only if their Alexander
gradings are congruent modulo \(p\).

\vskip0.05in
\noindent {\bf Example:} Consider the diagram of \(K=K(5,1,2)\) 
in Figure~\ref{Fig:KnotComplement}. A suitable one-chain
\(\eta(x_3,x_1)\) is shown in bold in the figure. By inspection, we
see that
\(\epsilon(x_3,x_1) = 2a\), where \(a\) is the class of the vertical loop at the
left-hand side of the figure. More generally, we have 
\(\epsilon(x_i,x_j) = (i-j)a\). Since \(a\) generates
\(H_1(L(5,1))\), the generators \(x_0,\dots,x_4\) all belong to
different \(\spinc\) structures. The same argument
shows that if \(K= K(p,q,k)\), the generators \(x_0,\ldots,x_{p-1}\)
all represent different \(\spinc\) structures. 

To compute the Alexander grading \(A(x_3,x_1)\), 
we consider the image of the same loop, but in the group 
\(H_1(Z-K) \cong H_1(\Sigma')/\langle\alpha_1,\alpha_2,\beta
\rangle\). The quotient 
\(H_1(\Sigma')/\langle \alpha_1, \alpha_2 \rangle\) is generated by
\(a\) and \(m\), and the image of \(\beta\) in this quotient is
\(5a+2m\). Thus \(H_1(Z-K)\) is generated by an element \(x\) with
\(a = 2x\) and \(m=-5x\). \(\eta(x_3,x_1)\) is homologous to \(2a +
2m\), so \(A(x_3,x_1) = -6x\).

\subsection{Domains}
If \(\x\) and \(\y\) are two generators, we define 
 \(\pi_2({\bf x}, {\bf y})\) (the set of {\it domains} from \(\x\) to
 \(\y\))
to be the set of two-chains
\(\phi\) with the property that \( \partial \phi = \eta(\x,\y)\) for some one-chain \(\eta(\x,\y)\) joining \(\x\) and \(\y\). Thus \(\pi_2({\bf x},{\bf
  y})\) is empty unless \(\x\) and \(\y\) belong to the same
\(\spinc\) structure.
 In the latter case, assuming that \(Z\) is a rational homology
sphere, there is a unique choice of
\( \eta \) which bounds a two-chain
in \(\Sigma\). Thus \(\pi_2({\bf x}, {\bf y})\) is an affine
copy of \(\Z\), where the action of \(\Z\) is given by adding
multiples of \(\Sigma\). 

If \(\phi \in \pi_2({\bf x}, {\bf y})\) and \(q \in \Sigma - \alphas -
\betas\), then \(\phi\) has a well-defined multiplicity
\(n_{q} (\phi)\) at \(q\). When \(\x\) and \(\y\) belong to the same
\(\spinc\) structure, their Alexander gradings are related by the
following formula: 
\begin{equation}
\label{Eq:Anwnz}
A(\x,\y) = p (n_{ w}(\phi) - n_{ z}(\phi))
\end{equation}
for any \(\phi \in \pi_2(\x,\y)\). 

 
\subsection{The Floer chain complex} 
We are now in a position to describe the differential on
\(\CF(K)\). It takes the following form:

\begin{equation*}
d{\bf x} = \sum_{{\bf y} \in \T_\alpha \cap \T_\beta} \quad  \sum_{{\{\phi \in
  \pi_2({\bf x},{\bf y})  \ts \vert \ts n_{z}(\phi)=0\}}} M(\phi) {\bf y}.  
\end{equation*}

The function \(M(\phi)\) is defined by counting certain
pseudo-holomorphic maps associated to the domain \(\phi\). For a
precise formulation of this count in two different contexts, 
see \cite{OS1}, \cite{Lipshitz}; the main thing that we will need to
know about it is that \(M(\phi) = 0\)
unless \(n_q(\phi) \geq 0 \) for every \(q \in \Sigma - \alphas -
\betas\). (Such a \( \phi\) is called a {\it positive domain}.) 

Regarding the form of the differential, note that the
inner sum is empty unless \({\bf x}\) and \({\bf y}\) belong to the
same \(\spinc\) structure. In this case, there is a unique element 
\(\phi_0(\x,\y) \in \pi_2(\x,\y)\) with \(n_{z}(\phi_0) = 0 \),
so the formula for the differential can be rewritten as
\begin{equation*}
d \x = \sum _{\y \sim \x} M(\phi_0(\x,\y)) \y.
\end{equation*} 
In particular, we can decompose \(\CF(K)\)
into a direct sum over \(\spinc\) structures:
\begin{equation*}
\CF(K) \cong \bigoplus_{\spi \in \Spinc(Z)} \CF(K,\s).
\end{equation*}

\vskip0.05in
\noindent {\bf Example:} If we represent
 \(K=K(p,q,k)\) by a genus one Heegaard diagram as in
 Figure~\ref{Fig:KnotComplement}, then \(\CF(K,\spi) \cong
\Z\) for each \(\spi \in \Spinc(L(p,q))\). Since each generator
belongs to a different \(\spinc\) structure, there are no
differentials in the complex \(\CF(K)\). 
\subsection{The knot filtration}
Up to this point, we have not made much use of the knot \(K\). 
Indeed, the 
homology of the complex \(\CF(K)\) is just the ordinary Heegaard Floer
homology \(\hfhat(Z)\) as defined in \cite{OS1}. To put \(K\) into the
picture, we observe that if \(M(\phi_0(\x,\y))\neq 0\), then 
\(n_w(\phi_0(\x,\y)) \geq 0\). From  equation \ref{Eq:Anwnz}, it
follows that \(A(\x,\y) \geq 0\) as well. For ease of notation, let us
pass  (somewhat arbitrarily)
from an affine \(H_1(Z-K)\) grading to an actual \(H_1(Z-K)\) grading
by fixing some generator \({\bf x_0}\) and setting \(A(\x) =
A(\x,{\bf x_0})\). (In the next section, we will see that there is a
canonical way to do this.) 
Then the formula for the differential becomes 
\begin{equation*}
d \x = \sum _{\{\y \ts | \ts A(\y) \leq A(\x)\} } M(\phi_0(\x,\y)) \y.
\end{equation*}
In other words, the Alexander grading defines a
filtration on \(\CF(K)\). The associated graded complex  \(\CF(K,j)\)
is generated by those \(\bfx\) with \(A(\bfx)=j\). Its homology is
denoted by \(\HFK(K,j)\) or (if we sum over all \(j \in \Z\)) by
\(\HFK(K)\), and is called the knot Floer homology. When we need it,
the \(\Z/2\) homological grading is indicated by a subscript:
\(\HFK_i(K,j)\).

\subsection{Fox Calculus and the Alexander polynomial} 
The Fox calculus \cite{Fox1}, \cite{CrowFox} provides a
streamlined method for computing the Alexander grading. We briefly
sketch this relationship here; for more details, see chapter 2 of
\cite{thesis}. 

We start with the Heegaard diagram
\((\Sigma',\alphas',\betas)\) for \(Z-K\) described in
section~\ref{SubSec:Diagrams}. Any such diagram gives rise to a
presentation of \(\pi_1(Z-K)\) as follows. First, we choose
orientations for the alpha and beta curves. 
We associate a generator \(a_i\) to each
 \(\alpha_i\), and a relation \(w_j\) to
each  \(\beta_j\), according to the following rule.
 Starting at an arbitrary point of \(\beta_j\) and with the
empty word \(w\), we transverse the curve, recording each intersection
with an alpha curve (say \(\alpha_k\)) by appending \(a_k^{\pm1}\) to
\(w\), where the sign is determined by the sign of the intersection
between \(\alpha_k\) and \(\beta_j\). 

Let \(|\cdot | : \pi_1(Z-K) \to H_1(Z-K)\) denote the abelianization
map. For any word \(w\) in the \(a_i\), we define the free differential 
\(d_{a_i}w\) to be an element of the group ring \(\Z[H_1(Z-K)]  \)
determined by the following rules:
\begin{align*}
 d_{a_i}a_j & = \delta_{ij} \\
 d_{a_i}(ab) & = d_{a_i}a + t^{|a|}d_{a_i}b \\
 d_{a_i}a^{-1} & = -t^{|a|}d_{a_i}a.
\end{align*}
(In fact, the last rule is a consequence of the preceding two.)

Before we combine terms, the expression \(d_{a_i}w_j\) contains one
monomial for each point in \(\alpha_i \cap \beta_j\). If we formally
expand the expression \(\det (d_{a_i}w_j)_{1\leq i,j\leq g}\),
again without combining terms, we obtain a polynomial with one term
for each generator of the complex \(\CF(K)\).This polynomial encodes
the Alexander grading, in the sense that if  \(\x\) and \(\y\)
correspond to monomials \(\pm t^x\) and \(\pm t^y\),
then \(A(\x,\y) = x-y\). It also encodes the \(\Z/2\) homological
grading: if two generators have the same \(\Z/2\) grading, the
corresponding monomials have the same sign, and if the gradings are
opposite, their monomials have opposite signs. 

Combining terms in this expression corresponds to the operation of
taking the  {\it graded Euler
characteristic}. More precisely, we have 

\begin{align*}
\chi(\HFK(K)) 
& = \sum _{i,j} (-1)^i t^j \dim \HFK_i(K,j) \\ 
& = \sum_{\x \in \mathbb{T}_\alpha \cap \mathbb{T}_\beta} (-1)^{\gr(\x)} t^{A(\x)} \\
& =  \det (d_{a_i} w_j)_{1\leq i,j\leq g}.
\end{align*}

The matrix \(A= (d_{a_i}w_j)
\), where \(1\leq i\leq g+1\) and \(1 \leq j \leq g\), is known as the 
{\it Alexander matrix}. 
The Alexander polynomial  \(\Delta_K(t)\) is defined to be the
 \(\gcd\) of its  \(g\times g\) minors. 

\begin{prop}
\label{Prop:Chi}
Let \(K\subset Z\) be a primitive knot of order \(p\). 
Then 
\begin{equation*}
\chi(\HFK(K)) \sim \Delta_K(t) \cdot \frac{t^p-1}{t-1}. 
\end{equation*}
\end{prop}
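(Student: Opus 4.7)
The plan is to compare the specific $g\times g$ minor $M_{g+1}=\det(d_{a_i}w_j)_{1\le i,j\le g}$ (which equals $\chi(\HFK(K))$) with the gcd $\Delta_K(t)$ of all $g\times g$ minors of the $(g+1)\times g$ Alexander matrix $A=(d_{a_i}w_j)$, using the fundamental identity of Fox calculus as the link between the two.

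\textbf{Step 1: a distinguished left null vector.} The Fox calculus identity $w-1=\sum_i d_{a_i}(w)(a_i-1)$ holds for every word $w$ in the free group on $a_1,\dots,a_{g+1}$. Each relator $w_j$ is trivial in $\pi_1(Z-K)$, so after pushing forward to $\mathbb{Z}[H_1(Z-K)]=\mathbb{Z}[t,t^{-1}]$ (where $|a_i|\in\mathbb{Z}$ is the image of $a_i$ under the abelianization) we obtain
\[
\sum_{i=1}^{g+1}(t^{|a_i|}-1)\,d_{a_i}(w_j)=0\qquad (1\le j\le g).
\]
Thus $v=(t^{|a_i|}-1)_{i=1}^{g+1}$ lies in the left kernel of $A$.

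\textbf{Step 2: proportionality of minors.} Let $M_i$ be the minor of $A$ obtained by deleting the $i$th row. Laplace expansion along an added column shows that the vector $\bigl((-1)^{i+1}M_i\bigr)_i$ is also in the left kernel of $A$. Provided $A$ has full column rank $g$ (otherwise every $M_i=0$ and both sides of the proposition vanish), this kernel is one-dimensional over $\mathbb{Q}(t)$, so there is a rational function $D(t)\in\mathbb{Q}(t)$ with
\[
M_i = \pm(t^{|a_i|}-1)\,D(t)\qquad\text{for every }i.
\]

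\textbf{Step 3: computing $\gcd_i(t^{|a_i|}-1)$.} Because $a_1,\dots,a_{g+1}$ generate $\pi_1(Z-K)$, their images $|a_1|,\dots,|a_{g+1}|$ generate $H_1(Z-K)\cong\mathbb{Z}$, hence $\gcd_i|a_i|=1$. The elementary identity $\gcd(t^a-1,t^b-1)=t^{\gcd(a,b)}-1$ then gives
\[
\gcd_i(t^{|a_i|}-1)\sim t-1,
\]
where $\sim$ denotes equality up to a unit $\pm t^k$ in $\mathbb{Z}[t,t^{-1}]$. Combining with Step 2,
\[
\Delta_K(t)=\gcd_i M_i \sim (t-1)\,D(t),
\]
so $D(t)\sim \Delta_K(t)/(t-1)$ (in particular $D$ is, up to units, a Laurent polynomial).

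\textbf{Step 4: identifying $|a_{g+1}|$.} In the Heegaard diagram for $Z-K$ constructed in section 3.1, the extra curve $\alpha_{g+1}$ crosses the added tube exactly once, so its dual loop $a_{g+1}$ is freely homotopic to a meridian $\mu$ of $K$. The Mayer--Vietoris sequence for $Z=Z_0\cup(S^1\times D^2)$ then identifies the image of $\mu$ in $H_1(Z-K)\cong\mathbb{Z}$ as a generator of the cyclic cokernel $H_1(Z)\cong\mathbb{Z}/p$ relative to the $S^1$ factor, forcing $|a_{g+1}|=\pm p$ and therefore $t^{|a_{g+1}|}-1\sim t^p-1$. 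Applying Step 2 to $i=g+1$,
\[
\chi(\HFK(K))=M_{g+1}\sim(t^p-1)\,D(t)\sim \Delta_K(t)\cdot\frac{t^p-1}{t-1}.
\]

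The main obstacle is really Step 2, the linear-algebra passage from a single null vector to proportionality of minors; once one is comfortable that the common ratio $D(t)$ is well defined as a rational function, the remaining work is just interpreting $|a_{g+1}|$ and $\gcd_i|a_i|$ topologically, which both fall out directly from the rational-homology-sphere setup and the Heegaard-diagram construction.
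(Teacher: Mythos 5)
Your proposal is correct and follows essentially the same route as the paper's own (sketched) proof: Fox's fundamental formula produces the left null vector $(t^{|a_i|}-1)_i$, the minors are then proportional to these entries, $\gcd_i(t^{|a_i|}-1)\sim t-1$ identifies the constant of proportionality with $\Delta_K(t)/(t-1)$, and $|a_{g+1}|=\pm p$ gives the result. Your Step 2 merely makes explicit the linear algebra the paper compresses into ``solving for ${\bf v}_j$ and substituting,'' and also handles the degenerate case of vanishing minors.
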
 

\noindent Here we write \(f \sim g\) to indicate \(f = \pm t^k g.\)
(This ambiguity arises because
 the \(\gcd\) is only well defined up to multiplication by \(\pm t^k\).)

\begin{proof}
In light of our comments above, this amounts to showing that 
\begin{equation*}
\det(d_{a_i}w_j)_{1\leq i,j\leq g} \sim \Delta_K(t) \cdot \frac{t^p-1}{t-1}.
\end{equation*}
That this is true was certainly known to Fox ({\it c.f.} item 6.3 of
\cite{Fox2}), who actually attributes it to Alexander
\cite{Alexander}. Since the proof is perhaps less well-known to a
modern audience, we sketch it here. 
The rows of
the Alexander  matrix form \(g+1\) vectors \({\bf v}_1,{\bf v}_2,\ldots {\bf
  v}_{g+1}\) in a \(g\)-dimensional space, so 
there must be a linear relation between them. This relation is given
by Fox's {\it fundamental formula}, which implies that for any word \(w\)
\begin{equation*}
t^{|w|}-1 = \sum_{i=1}^{g+1} d_{a_i} w \cdot (t^{|a_i|}-1).
\end{equation*}
(In fact, an analogous relation holds in the group ring of the
free group as well.) When \(w\)=\(w_j\) is a relation in
\(\pi_1(Z-K)\), the left-hand side of this equation is \(0\). It
follows that the \({\bf v}_i\) satisfy the equation
\begin{equation*}
\sum_{i=1}^{g+1} (t^{|a_i|}-1) {\bf v}_i = 0.
\end{equation*}

Let \(\Delta_i\) be the determinant of the \(g\times g\) matrix
obtained by deleting the \(i\)-th row of \(A\). By solving for \({\bf
  v}_j\) in the above equation and substituting it into the expression
for \(\Delta_i\), we find that 
\begin{equation*}
\frac{\Delta_i}{\Delta_j} = \pm \frac{t^{|a_i|}-1}{t^{|a_j|}-1}.
\end{equation*}
 Since the \(a_i\) generate \(\pi_1(Z-K)\), their abelianizations
 generate \(H_1(Z-K) \cong \Z\). In other words,
\(\gcd(|a_i|) = 1\), which implies that
 \(\gcd (t^{|a_i|}-1) = t-1\). Knowing this, it
is not difficult to see that 
\begin{equation*}
\Delta_i \sim \Delta_K(t) \cdot \frac{t^{|a_i|}-1}{t-1}. 
\end{equation*}
The desired formula is a special case, since \(|a_{g+1}|=|m|= \pm p\). 
\end{proof}

It is a well-known fact that the Alexander polynomial \(\Delta_K(t)\) can be
normalized so that \(\Delta_K(t^{-1}) = \Delta_K(t)\),
\(\Delta_K(1)=1\). We use this normalization to fix particular values
for the Alexander and homological gradings on \(\CF(K)\), by requiring
that 
\begin{equation*}
\chi(\HFK(K)) = \DBar(K) = \Delta_K(t) \cdot
\frac{t^{p/2}-t^{-p/2}}{t^{1/2}-t^{-1/2}}.
\end{equation*}
is a symmetric Laurent polynomial with \(\DBar_K(1) = p\). 

\vskip0.05in
\noindent {\bf Example:} Let \(K = K(5,1,2)\). Referring to the
diagram in Figure~\ref{Fig:KnotComplement}, we let \(a\) be the
generator of \(\pi_1(Z-K)\) corresponding to \(\alpha_1\), and \(m\)
be the generator corresponding to \(\alpha_2\). If we traverse
\(\beta\) starting just
below the point \(x_1\), we find that the corresponding relator 
 is \(w=amama^3\). The abelianization map \( |\cdot |:
\pi_1(Z-K) \to H_1(Z-K)\) satisfies \(|a|= 2\), \(|m|={-5}\), so
\begin{align*}
d_a w & = 1 + t^{|am|} + t^{|amam|} + t^{|amama|} + t^{|amama^2|} \\
& = 1 + t^{-3}+t^{-6} + t^{-4} + t^{-2}.
\end{align*}
Thus 
\begin{equation*}
\DBar(K) = t^{-3}+t^{-1}+1 + t + t^3
\end{equation*}
and  the Alexander gradings of
\(x_1,x_2,x_3,x_4\), and \(x_0\) are \(3,0,-3,-1,\) and \(1\)
respectively. The Alexander polynomial of \(K\) is 
\begin{equation*}
\Delta_K(t) \sim \frac{(t-1)d_a(w)}{t^5-1} \sim t^{-1}-1+t 
\end{equation*}
This is recognizable as the Alexander polynomial of the trefoil knot
in \(S^3\). In fact,  \(Z = L(5,1)\) is realized by \(-5\) surgery on the
trefoil, and \(K\) is the dual knot in \(Z\).

\subsection{Reversing orientation} We now consider the effect of exchanging
the roles of \(z\) and \(w\) in the definition of \(\CF(K)\), so that
instead of considering domains with \(n_z(\phi) = 0 \), we use
domains with \(n_w(\phi) = 0 \). Switching the basepoints has the
effect of reversing the orientation on \(K\), so we denote
the resulting complex by \(\CF(-K)\). This complex has the same
generators as \(\CF(K)\), but the differentials are different. 
From equation~\eqref{Eq:Anwnz},
we see that the Alexander grading defines an {\it increasing} filtration on
\(\CF(-K)\), {\it i.e.}
\(d\x\) is a sum of generators \(\y\) with \(A(\y) \geq A(\x)\). 

The \(\epsilon\)-grading on \(\CF(-K)\) remains the the same as on \(\CF(K)\),
but the \(\spinc\) structure determined by an equivalence class will
differ. In order to state the relationship precisely, we denote by
 \(\spi_k\) the \(\spinc\) structure on \(Z\) given by
\(\spi_z(\x)\), where \(\x\) is any generator with \(A(k) \equiv k \
(p)\). Then by combining Lemma~\ref{Lem:LinkingForm} with
 equation~\eqref{Eq:Spinc}, we see that 
\begin{align*}
\spi_w(\x) & = \spi_z(\x) - [K] \\
& = \spi_{A(x)-a},
\end{align*}
where \(a\) is the self-linking number of \(K\). 
In particular, the  summand of \(\CF(-K)\) generated by those \(\x\) with
\(A(\x) \equiv k \ (p)\) has homology equal to \(\hfhat(Z,
\spi_{k-a})\). 

\section{Knots with LHS surgeries}
\label{Sec:LHS}
We now suppose that we are given a knot \(K \subset Z\), where \(Z\) is an
L-space. In this section, we give a precise characterization of when
\(K\) has a surgery which is an L-space homology sphere in terms of
the knot Floer homology of \(K\). The main tool is the mapping cone
theorem of \Ozsvath and \Szabo \cite{OSFrac}, which expresses the
Heegaard Floer homology of surgeries on \(K\) in terms of the homology
of certain complexes derived from \(\CF(K)\) and \(\CF(-K)\). We begin
by recalling  their construction. 

\subsection{The complex \(C_n(K)\)}

The differential in the
complex \(\CF(K)\) can be decomposed as \(d=d_0+d_+\), where 
\begin{align*}
d_0(\x) & = \sum_{A(\y)=A(\x)} \quad  \sum_{\{\phi\in \pi_2(\x,\y)
    \ts \vert \ts 
  n_z(\phi)=n_w(\phi)=0\}} M(\phi)\y \\
d_+(\x) & = \sum_{A(\y)<A(\x)} \quad \sum_{\{\phi\in \pi_2(\x,\y) \ts \vert \ts
  n_z(\phi)=0\}} M(\phi)\y \\
\end{align*}
Similarly, the differential in \(\CF(-K)\) can be decomposed as
\(d_0+d_-\), where 
\begin{equation*}
d_-(\x)  = \sum_{A(\y)>A(\x)}  \quad \sum_{\{\phi\in \pi_2(\x,\y) \ts \vert \ts
  n_z(\phi)=0\}} M(\phi)\y 
\end{equation*}

For each \(n \in \Z\), we let \(C_n(K)\) be the complex 
generated by those \(\bfx \in \mathbb{T}_\alpha \cap \mathbb{T}_\beta
\) for which \(A(\x) \equiv n \ (p)\), and whose differential is given
by the formula 
\begin{equation*}
d_n \x = \begin{cases} 
d_0(\x) + d_+(\x) & \quad \text{if} \ A(\x) < n\\
 d_0(\x) + d_+(\x) + d_-(\x) & \quad \text{if} \ A(\x) = n \\
      d_0(\x)   + d_-(\x) & \quad \text{if} \ A(\x) > n.
\end{cases}
\end{equation*}
When \(n\gg 0\), \(C_n(K) = \CF(K, \spi_n)\), while for \(n \ll 0\) , 
\(C_n(K) = \CF(-K, \spi_{n-a})\). There are natural maps
$\pi_n^+: C_n(K) \to \CF(K,\spi_n)$ and $\pi_n^-: C_n(K) \to \CF(-K,\spi_{n-a}) $
defined by 
\begin{equation*}
\pi_n^+(\x) = \begin{cases} 
\x & \quad \text{if} \ A(\x) \leq n\\
  0 & \quad \text{if} \ A(\x) > n
\end{cases}
\quad \text{and} \quad 
\pi_n^-(\x) = \begin{cases} 
0 & \quad \text{if} \ A(\x) < n \\
  \x & \quad \text{if} \ A(\x) \geq n.
\end{cases}
\end{equation*}
We denote the homology group \(H(C_n(K),d_n)\) by \(A_n\), and let 
\begin{equation*}
\pi_n^+: A_n \to \hfhat(Z,\spi_n) \quad \text{and} \quad 
\pi_n^-: A_n \to \hfhat(Z,\spi_{n-a})
\end{equation*}
be the induced maps.

Geometrically speaking, the group \(A_n\) can be identified with
 \(\hfhat(Z',\spi_n)\), where \(Z'\) is a manifold obtained by doing a
 large integral surgery on \(K\), and \(\spi_n\) is a particular
 \(\spinc\) structure on \(Z'\) ({\it c.f.} section 4 of
 \cite{OSFrac}). The maps \(\pi_n^\pm\) are induced by
 certain \(\spinc\) structures on the surgery cobordism. An easy (but
 useful) consequence of this identification is that each \(A_n\) must
 have rank \(\geq 1\). 
 
\subsection{The mapping cone formula} 
The formula of \cite{OSFrac} expresses the homology of surgeries on
\(K\) in terms of the groups \( A_n\) and their projections
\(\pi_n^\pm\) to \(\hfhat(Z)\). To be precise, 
 recall from  Lemma~\ref{Lem:LinkingForm}
 that the first homology groups of the manifolds \(K_m\) obtained by 
integer surgery on \(K\) are precisely of the form \(\Z/m\), where 
\( m \equiv - a \ (p)\). 

We now fix some 
  \(m \equiv -a \ (p)\). For each \(n \in \Z\), we define \(
B_n = \hfhat(Z,\spi_n) \). (Note that although \(B_{n_1} \cong
B_{n_2}\) whenever \(n_1 \equiv n_2 \ (p)\), we treat them as
different groups.) Then we have maps 
\begin{align*}
\pi_n^+&: A_n \to B_n  \\
\pi_n^- &:A_n \to B_{n+m}.
\end{align*} 
We write 
\begin{equation*}
A = \bigoplus _{n \in \Z} A_n \quad \text{and} \quad B = \bigoplus_{n \in \Z} B_n.
\end{equation*}
and let \(\pi^\pm:A\to B\)  be the maps whose
components are given by \(\pi^\pm_n\). 
 Then we can form the short chain complex 
\begin{equation*}
\begin{CD}
C(K,m) = A @>{\pi^- + \pi^+}>> B .
\end{CD}
\end{equation*}

\begin{thrm} \cite{OSFrac}
\label{Thrm:MappingCone}
 \(\hfhat(K_{m})\) is isomorphic to the homology of the complex \(C(K,m)\).
\end{thrm}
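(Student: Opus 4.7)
The plan is to follow the original argument of Ozsv{\'a}th and Szab{\'o} in \cite{OSFrac}, whose three key ingredients are the large surgery theorem, the surgery exact triangle for $\hfhat$, and an identification of certain cobordism-induced maps with the projections $\pi_n^\pm$.

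First, I would establish the large surgery identification: for a surgery coefficient $N$ with $|N|$ sufficiently large, there is a $\spinc$-structure-preserving isomorphism $\hfhat(K_N,\spi_n) \cong A_n$. This is proved by building a Heegaard diagram for $K_N$ from the doubly-pointed diagram for $(Z,K)$ via the standard winding construction applied to a new alpha curve representing the meridian; for $|N|$ large, the combinatorics of the resulting diagram match the definition of $C_n(K)$ exactly, with intersection points partitioned according to whether they are forced to pass near $z$ or $w$. Under the same identification, the maps on $\hfhat$ induced by the two $\spinc$-extensions of the natural 2-handle cobordism $W\colon K_N \to Z$ correspond to $\pi_n^+$ and $\pi_n^-$, respectively, by an application of the composition law and a careful grading count.

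Next, I would invoke the surgery exact triangle relating the three manifolds obtained by meridional, $N$-framed, and $m$-framed Dehn filling along $\partial Z_0$, namely $Z$, $K_N$, and $K_m$. Summing over all $\spinc$ structures on each manifold and substituting the large-surgery identifications, the triangle reorganizes into a long exact sequence
\begin{equation*}
\cdots \to \bigoplus_{n} A_n \xrightarrow{\pi^+ + \pi^-} \bigoplus_{n} B_n \to \hfhat(K_m) \to \cdots,
\end{equation*}
in which the left-hand map decomposes into its components $\pi_n^\pm$ exactly as prescribed. Since this is concentrated in two positions, the exact sequence is equivalent to the assertion that $\hfhat(K_m)$ is the homology of the two-term complex $C(K,m)$.

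The main obstacle is bookkeeping. The sums over $n$ are infinite, so one must check that for each fixed $\spinc$ structure on $K_m$ only finitely many $A_n$'s contribute, that $\pi^+ + \pi^-$ really acts as a finite sum on each summand, and that the $\spinc$-structure labels on both sides match those prescribed by Lemma~\ref{Lem:LinkingForm}. In particular, the shift $\pi_n^-\colon A_n \to B_{n+m}$ (rather than $B_{n+p}$) encodes the asymmetry between the two handle cobordisms and is determined by the self-linking number via $m \equiv -a \pmod{p}$; keeping track of this shift, and verifying that the resulting direct limit of surgery exact triangles genuinely converges to $\hfhat(K_m)$, is the most delicate part of the argument.
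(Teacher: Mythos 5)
The paper does not prove Theorem~\ref{Thrm:MappingCone}; it is quoted from \cite{OSFrac} and used as a black box, so there is no internal proof to compare your argument against. Measured against the source, your outline has the right skeleton: the large surgery theorem identifying $\hfhat(K_N,\spi_n)$ with $A_n$ via the winding construction, the identification of the extremal $\spinc$ cobordism maps with $\pi_n^\pm$, an exact sequence, and a truncation/finiteness check. One step, however, does not work as you state it. The slopes $\mu$, $N\mu+\lambda$, and $m\mu+\lambda$ do \emph{not} form a surgery triad when $|N-m|>1$, since the last two have geometric intersection number $N-m$; consequently there is no exact triangle ``relating $Z$, $K_N$, and $K_m$'' in the naive sense you invoke. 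The sequence actually used by Ozsv\'ath and Szab\'o is (up to rotation)
\begin{equation*}
\cdots \longrightarrow \hfhat(K_m) \longrightarrow \hfhat(K_{N}) \longrightarrow \bigoplus\nolimits^{N-m}\hfhat(Z) \longrightarrow \cdots,
\end{equation*}
obtained either by iterating the genuine triangle or by applying it to the meridian of $K$ (whose relevant filling produces $Z$ connect-summed with a lens space, hence $\hfhat(Z)$ with multiplicity). It is exactly this multiplicity that explains why the $B$-column of $C(K,m)$ contains a separate copy of $\hfhat(Z,\spi_n)$ for each index $n$ in the truncated range, rather than a single copy of $\hfhat(Z)$; with the naive triangle the ranks would not even match. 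If you replace your second step by this refined sequence, and then carry out the bookkeeping you already flag (finiteness of the truncation, the $\spinc$ labels, and the shift $\pi_n^-\colon A_n\to B_{n+m}$ dictated by $m\equiv -a \pmod p$), your argument becomes the one in \cite{OSFrac}.
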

A few remarks are in order. First, we should point out that 
 the homology of the complexes \(\CF(K,\spi_n)\) and \(\CF(-K,
\spi_n)\) both compute \(\hfhat(Z,\spi_n)\), so they are 
canonically isomorphic ({\it c.f.} Theorem 2.1 of
\cite{OS3}). In general, however, there is no easy way to determine this
isomorphism. Thus even though 
the maps \(\pi^+\) and \(\pi^-\) are  determined by the
complex \(\CF(K)\),  the behavior of their sum  can be quite difficult
to calculate. However, we will  only consider the case where
\(\hfhat(Z,\spi_n) \cong \Z\) (which has very few isomorphisms), so
this difficulty will not arise.

Second, note that since \(\pi^+\)
preserves  \(n\), and  \(\pi^-\) raises it by
\(m\), \(C(K,m)\) can be decomposed into a direct sum of \(m\)
complexes. This splitting 
corresponds to the decomposition of \(\hfhat(K_m)\) into \(\spinc\)
structures. 

Finally, observe that for \(n \gg 0\), the map \(\pi_n^-\) is
trivial, and \(\pi_n^+\) is an isomorphism. Similarly, for \(n \ll
0\), \(\pi_n^-\) is an isomorphism, and \(\pi_n^+\) is trivial.  It
follows that the chain complex \(C(K,m)\) (which is infinitely
generated) can be decomposed into an infinite number of  summands of
the form
\begin{equation*}
\begin{CD}
A_n @>\pi_n^+>> B_{n} \quad (n >N_+ ) \quad \text{and} \quad A_n
@>\pi_n^->> B_{n+m}  \quad (n < N_- )
\end{CD}
\end{equation*}
whose homology is trivial, together with a single interesting summand
\(\widetilde{C}(K,m)\) which contains \(A_n\) for \(N_-\leq n \leq
N_+\) and \(B_n\) for \(N_-+m\leq n \leq N_+\).

\subsection{Proof of Theorem~\ref{Thrm:LHSCrit}}

Suppose now that \(Z\) is an L-space. We wish to characterize when
\(K\) has an L-space homology sphere surgery in terms of
\(\HFK(K)\). To do so, we recall a few invariants derived
from the knot Floer homology. 

\begin{defn} The {\em width} of \(\HFK(K)\) is the difference 
\(M_+-M_-\), where \(M_+\) is the maximum value of \(j\) for which
\(\HFK(K,j)\) is nontrivial, and \(M_-\) is the minimum value. 
\end{defn}

The width is related to the genus of \(K\) by the following theorem of
Ni:
\begin{thrm}
\label{Thrm:YiQHS}
\cite{YiQHS}
Suppose \(K\) is a primitive knot in a rational homology sphere \(Z\),
and that \(H_1(\Z) \cong \Z/p\). Then \(g(K) =
(\text{\em{width}} \ \HFK(K)-p+1)/2\). 
\end{thrm}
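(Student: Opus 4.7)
The plan is to reduce the claim to Juh\'asz's theorem that sutured Floer homology detects the Thurston norm of a balanced sutured three-manifold, along the lines of the genus-detection arguments Ni has developed in Heegaard Floer theory.

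First, I would equip the exterior $Z_0$ with a balanced sutured manifold structure $(Z_0,\gamma)$ whose suture $\gamma \subset \partial Z_0$ consists of two oppositely-oriented parallel meridians of $K$. A doubly-pointed Heegaard diagram $(\Sigma,\alphas,\betas,z,w)$ for $(Z,K)$ yields a sutured Heegaard diagram for $(Z_0,\gamma)$ by the standard recipe: remove small neighborhoods of $z$ and $w$ to produce two boundary circles, which become the sutures. The resulting identification
\begin{equation*}
\HFK(K,\mathfrak{s},j) \cong SFH(Z_0,\gamma,\mathfrak{t}(\mathfrak{s},j))
\end{equation*}
parameterizes relative $\spinc$ structures on $(Z_0,\gamma)$ by pairs $(\mathfrak{s},j)$ consisting of a $\spinc$ structure on $Z$ and an Alexander grading, subject to the compatibility that $j$ lies in a fixed residue class modulo $p$ on each $\mathfrak{s}$.

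Next, I would invoke Juh\'asz's theorem for the class of a minimum-genus rational Seifert surface $\Sigma \subset (Z_0,\partial Z_0)$ with $[\partial\Sigma]=am+p\ell$. A minimizer is connected with one boundary component, so
\begin{equation*}
x([\Sigma]) \;=\; -\chi(\Sigma) \;=\; 2g(K)-1,
\end{equation*}
and Juh\'asz's theorem identifies $x([\Sigma])$ with the diameter of the support of $SFH(Z_0,\gamma,-)$ under the $\spinc$-grading, evaluated against $[\Sigma]$. Translating this pairing into the Alexander grading, and noting that $[\Sigma]$ evaluates as $p$ times the primitive class on $H_1(Z_0)$, one identifies the $\spinc$-diameter with the width of $\HFK(K)$ measured in Alexander gradings, up to an overall shift by $p-1$. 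The constant is pinned down as follows: within a single $\spinc$ structure on $Z$ consecutive Alexander gradings supporting $\HFK$ are separated by multiples of $p$, and across all $p$ structures the residues mod $p$ exhaust $\Z/p$. Combining this bookkeeping with Juh\'asz's identity yields
\begin{equation*}
\mathrm{width}(\HFK(K)) \;=\; (2g(K)-1) + p,
\end{equation*}
which rearranges to the stated formula.

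The principal obstacle is the last step: verifying that every $\spinc$ structure on $Z$ actually contributes to the extremal Alexander gradings of $\HFK(K)$, so that the full $p-1$ shift materializes and one cannot lose range from a gap in some $\spinc$ class. This requires the \emph{nonvanishing} half of Juh\'asz's theorem (the extremal $\spinc$-graded summand of $SFH$ is nontrivial for any taut sutured decomposition), combined with the conjugation symmetry pairing $\mathfrak{s}$ with $\bar{\mathfrak{s}}$ to control both ends simultaneously. This nonvanishing input is the technical heart of Ni's argument in \cite{YiQHS} and is precisely what upgrades the easy adjunction-type inequality $g(K) \geq (\mathrm{width}(\HFK(K))-p+1)/2$ to the sharp equality claimed in the theorem.
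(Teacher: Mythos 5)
The paper offers no proof of this statement to compare against: it is imported verbatim from Ni \cite{YiQHS}. Your sketch follows what is essentially Ni's own route (as later systematized by Juh\'asz): realize $\HFK(K)$ as the sutured Floer homology of the exterior with two meridional sutures, decompose along a minimal-genus rational Seifert surface, and combine Gabai's theorem that the decomposed manifold is taut with the nonvanishing of $SFH$ for taut sutured manifolds to detect the extremal Alexander gradings, the reverse inequality being the adjunction-type vanishing. The strategy, the identification of where the technical weight sits, and the final constant $\mathrm{width}\,\HFK(K)=2g(K)+p-1$ are all correct. The one genuinely loose joint is your accounting for the shift by $p-1$. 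You attribute it to the residues mod $p$ exhausting $\Z/p$ together with conjugation symmetry; but the residue count only shows that the $p$ per-$\spinc$-structure extremal gradings occupy distinct classes mod $p$ (hence spread over at least $p-1$), and it does not locate them, so it cannot by itself convert the Juh\'asz identity you invoke into the stated formula. The actual source of the shift is the boundary term: a minimal representative of $[\Sigma]$ with $[\partial\Sigma]=am+p\ell$ meets the two meridional sutures in $2p$ points, so the quantity that Juh\'asz's polytope/decomposition theorem equates with the $\spinc$-diameter is not $-\chi(\Sigma)=2g-1$ but $-\chi(\Sigma)+\tfrac{1}{2}|\partial\Sigma\cap\gamma|=2g-1+p$ (equivalently, the set of ``outer'' relative $\spinc$ structures surviving the decomposition has breadth $p$ rather than $1$). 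Taken literally, the identity as you state it would return $\mathrm{width}=2g-1$. Once the boundary contribution is inserted the argument closes; you should also record that discarding closed components and reducing to a connected, one-boundary-component minimizer uses $H_2(Z_0)=0$ and the primitivity of $\alpha$.
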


The other invariant is an obvious generalization of the
Ozsv{\'a}th-Szab{\'o} \(\tau\) invariant defined in \cite{OS10} ({\it
  c.f.}  \cite{HeddenTau}, \cite{thesis}):

\begin{defn}
If \(\spi_k\) is a \(\spinc\) structure on \(Z\), we define
\(\tau(K,\spi_k)\) to be the minimum value of \(n \equiv k \ (p)\) 
for which the map \(\pi^+_n\) is nontrivial.  
\end{defn}

\begin{prop}
\label{Prop:LSCrit}
Suppose \(Z\) is an L-space, and that
 \(K \subset Z\) has a homology sphere surgery \(Y\). Then
\(Y\) is an L-space if and only if one of the following conditions
holds:
\begin{enumerate}
\item  \(\HFK(K) \cong \Z^p\) and \(\text{\em width} \ \HFK(K) <2p\).
\item  \(\HFK(K) \cong \Z^{p+2}\),  \(\text{\em width}\ \HFK(K) =2p\), and
  either \(Y=K_{1}\) and \(\tau(K,\spi_0)>0\) or 
\(Y=K_{-1}\) and \(\tau(K,\spi_0)<0\).
\end{enumerate}
\end{prop}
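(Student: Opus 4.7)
The plan is to apply the mapping cone formula (Theorem~\ref{Thrm:MappingCone}) to compute $\widehat{HF}(Y)$ and to extract the condition for the total rank to equal $1$. Since $Y$ is a homology sphere, the surgery slope satisfies $|m| = 1$, so $m = \pm 1$; I will treat $m = +1$ in detail, the case $m = -1$ being symmetric under orientation reversal of $K$. Using the truncation described in this section, the reduced complex $\widetilde{C}(K,1)$ contains $A_n$ for $M_- \leq n \leq M_+$ (a total of $w + 1$ summands, each of rank $\geq 1$, where $w$ denotes the width of $\HFK(K)$) and $B_n$ for $M_- + 1 \leq n \leq M_+$ (a total of $w$ summands, each $\cong \Z$ by the L-space hypothesis on $Z$). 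Since the rank of homology of any two-term complex $A \to B$ equals $\operatorname{rank}(A) - \operatorname{rank}(B) + 2\operatorname{rank}(\operatorname{coker})$, I obtain
\begin{equation*}
\operatorname{rank}(\widehat{HF}(K_1)) \;=\; 1 + \sum_{n=M_-}^{M_+}\bigl(\operatorname{rank}(A_n) - 1\bigr) + 2\operatorname{rank}\bigl(\operatorname{coker}(\pi^+ + \pi^-)\bigr) \;\geq\; 1,
\end{equation*}
with equality (that is, $K_1$ being an L-space) if and only if each $A_n$ has rank exactly $1$ and $\pi^+ + \pi^-$ is surjective.

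The next step is to convert $\operatorname{rank}(A_n) = 1$ into structural constraints on $\HFK(K)$. Filtering $C_n(K)$ by $|A(\bfx) - n|$ gives a spectral sequence with $E_1$-page $\HFK(K, \spi_n) = \bigoplus_{j \equiv n \pmod p}\HFK(K, j)$ converging to $A_n$, hence $\operatorname{rank}(A_n) \leq \operatorname{rank}(\HFK(K, \spi_n))$ with equal parity. Each $\HFK(K, \spi_n)$ thus has odd rank at least $1$. Since two generators in the same $\spinc$ structure differ in Alexander grading by a nonzero multiple of $p$, a $\spinc$ supporting $\HFK$ of rank $r$ forces $w \geq (r-1)p$. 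Hence $w < 2p$ forces each $\HFK(K, \spi_n) \cong \Z$ and $\HFK(K) \cong \Z^p$ (case (1)); $w = 2p$ allows at most one $\spinc$ to have rank $3$, yielding $\HFK(K) \cong \Z^{p+2}$ with the rank-$3$ summand (by the symmetry of $\DBar_K$) necessarily being $\spi_0$, with generators at $A = -p, 0, p$ (case (2)). Configurations with $w > 2p$ would require cancellations in the spectral sequence across Alexander-grading gaps larger than $p$; but the differential on $C_n$ is $d_0 + d_+$ on $\{A < n\}$ and $d_0 + d_-$ on $\{A > n\}$, and because $d_+$ only lowers and $d_-$ only raises $A$, such cancellations cannot occur, so some $A_n$ has rank $\geq 2$.

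What remains is the surjectivity of $\pi^+ + \pi^-$. In case (1) each $A_n \cong \Z$ is generated by an explicit chain-level element, and one verifies that the maps $\pi_n^\pm$ fit together along the shift by $m = +1$ to surject onto each $B_n$. In case (2), the only $B_n$ where surjectivity might fail is $B_0$ in $\spi_0$, where the rank-$3$ $\HFK$ piece lives. A short case analysis of the differential $d(g_0) = a\, g_{-p} + c'\, g_p$ on $C_0$ shows that the surviving homology class in $A_0 \cong \Z$ lies in Alexander grading $\tau(K, \spi_0) \in \{-p, 0, p\}$. Tracking the image of this survivor under $\pi_0^+$ together with the contribution of $\pi_{-p}^-$ from $A_{-p}$ shows that the image hits a generator of $B_0$ precisely when $\tau(K, \spi_0) > 0$, giving the stated condition for $K_1$; the case $Y = K_{-1}$ follows by reversing orientation. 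The main obstacle will be this final surjectivity computation, since it requires identifying the image of the survivor class in $B_0$ explicitly and relating it to the sign of $\tau(K, \spi_0)$; the rank-theoretic and spectral-sequence arguments in the preceding steps are comparatively routine.
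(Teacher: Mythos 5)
Your overall architecture --- mapping cone, truncation, and the rank identity \(\rank \hfhat(K_1) = 1 + \sum_n (\rank A_n - 1) + 2 \rank \coker (\pi^+ + \pi^-)\) --- is the same as the paper's, and that identity correctly reduces the L-space condition to ``every \(A_n\) has rank one and \(\pi^+ + \pi^-\) is onto.'' But the step converting \(\rank A_n = 1\) into the structure of \(\HFK(K)\) has a genuine gap. Two generators in the same \(\spinc\) structure have Alexander gradings congruent mod \(p\), but those gradings may be \emph{equal}, so a \(\spinc\) class of rank \(r\) does not force \(w \geq (r-1)p\). Your parity argument shows only that each \(\HFK(K,\spi_n)\) has odd rank; it does not exclude, say, \(\HFK(K,\spi_n)\) of rank \(3\) supported in two gradings \(p\) apart (ranks \(2\) and \(1\)), which is compatible with \(w < 2p\), with every \(A_m\) having rank one, and with \(\HFK(K) \not\cong \Z^p\). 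Ruling such configurations out is exactly the content of Lemmas 3.1 and 3.2 of \cite{OSLens}, which the paper invokes in Lemma~\ref{Lem:pp2}: when all \(A_n \cong \Z\), the complex in each \(\spinc\) class reduces to a staircase with exactly one \(\Z\) in each of \(2m_k - 1\) distinct gradings. The proof of that fact needs the relation \(d_+'d_-' + d_-'d_+' = 0\) coming from \(CFK^\infty\) (the same relation the paper uses at the end of the proof of Theorem~\ref{Thrm:LHSCrit}); your spectral-sequence rank count cannot substitute for it.

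Second, your claim that \(w > 2p\) forces some \(A_n\) to have rank \(\geq 2\) is false. If \(\HFK(K) \cong \Z^p\) with one generator per \(\spinc\) class, then every \(A_n\) has rank one no matter how large the width is, since the spectral sequence for \(A_n\) has \(E_1 \cong \Z\) and nothing to cancel. The obstruction when \(w > 2p\) lives entirely in the second half of your rank identity: some \(A_m\) with \(\pi_m^-\) trivial sits to the left of some \(A_n\) with \(\pi_n^+\) trivial, producing a \([+,-]\) summand of \(C(K,1)\) whose homology contributes to \(\coker(\pi^+ + \pi^-)\) (together with an extra \([-,+]\) summand contributing to the kernel); this is the interval decomposition of Lemma~\ref{Lem:2p}. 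Relatedly, your sketch of the surjectivity step in case (1) never brings the width into the argument at all --- a symptom of the same misidentification, since the equivalence ``\(\text{width} \ \HFK(K) < 2p\) if and only if \(\pi^+ + \pi^-\) is onto with rank-one kernel'' is precisely where the width hypothesis does its work.
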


We will work our way up to the proof through a series of lemmas. 
For the rest of this section, we suppose that \(Z\) is an L-space and
that \(K\subset Z\) admits a homology
sphere surgery. For the moment, we assume that this surgery is \(K_{1}\).


\begin{lem}
\label{Lem:An}
If \(K_{1}\) is an L-space, then 
\(A_n \cong \Z\) for every \(n \in \Z\), and there is at most one value
of \(n\) for which both \(\pi_n^+\) and \(\pi_n^-\) are both trivial. 
\end{lem}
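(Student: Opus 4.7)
The plan is to apply Theorem~\ref{Thrm:MappingCone} with $m=1$ and read off what the hypothesis $\hfhat(K_1) \cong \Z$ forces on the complex $C(K,1) = A \xrightarrow{\pi^+ + \pi^-} B$. Since $K_1$ is a homology sphere there is only one $\spinc$ structure on it, and since $K_1$ is an L-space the total homology is $\Z$. Using the acyclic-plus-interesting decomposition recalled in the text, I reduce to the single finite-length summand $\widetilde{C}(K,1)$, which contains $A_n$ for $n \in [N_-,N_+]$ and $B_n$ for $n \in [N_-+1,N_+]$; all of $\hfhat(K_1) = \Z$ lives here, while the complementary acyclic summands $A_n \xrightarrow{\pi_n^+} B_n$ (for $n > N_+$) and $A_n \xrightarrow{\pi_n^-} B_{n+1}$ (for $n < N_-$) already give $A_n \cong \Z$ for those $n$ (since each $B_n \cong \Z$).

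Rationally, write $a_n = \dim_\Q(A_n \otimes \Q)$; each $a_n \geq 1$ by the surgery-cobordism interpretation in the text. The two-term complex $\widetilde{C}(K,1)$ satisfies
\[
\dim_\Q \ker(\pi^+ + \pi^-) - \dim_\Q \text{coker}(\pi^+ + \pi^-) = \sum_{n=N_-}^{N_+} a_n - (N_+ - N_-) \geq 1,
\]
while $\dim_\Q \ker + \dim_\Q \text{coker} = \dim_\Q \hfhat(K_1) = 1$. If any $a_n$ were $\geq 2$ the left-hand difference would jump to at least $2$, contradicting the sum being $1$. Hence $a_n = 1$ for every $n \in [N_-, N_+]$, $\dim_\Q \ker = 1$, and $\dim_\Q \text{coker} = 0$.

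To promote to the integral statement, observe that $\text{coker}(\pi^+ + \pi^-)$ has rank $0$ and embeds into the torsion-free group $\hfhat(K_1) \cong \Z$, forcing $\text{coker} = 0$ outright; and $\ker(\pi^+ + \pi^-)$ is then all of $\hfhat(K_1)$, so $\ker \cong \Z$. Because $B$ is torsion-free, every torsion element of $A = \bigoplus_n A_n$ lies in $\ker \cong \Z$, which has no torsion; so each $A_n$ is torsion-free. Combined with $a_n = 1$ this gives $A_n \cong \Z$ for $n \in [N_-, N_+]$, and the acyclic summands handle the remaining indices.

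For the uniqueness statement, if both $\pi_n^+$ and $\pi_n^-$ vanish at some $n$, then the direct summand $A_n \cong \Z$ of $A$ is contained in $\ker \cong \Z$. Two distinct such indices $n_1 \neq n_2$ would yield independent copies of $\Z$ inside $\ker$, contradicting $\ker \cong \Z$; so at most one such $n$ exists. The main technical care needed is in the set-up: one must check that the claimed decomposition of $C(K,1)$ into acyclic summands and $\widetilde{C}(K,1)$ is a genuine direct-sum splitting, which requires choosing $N_\pm$ large enough that both $\pi_n^+$ and $\pi_{n-1}^-$ have reached their stable iso-or-zero values simultaneously, and keeping track of $\Z/2$-grading signs so that the Euler-characteristic identity $\dim A - \dim B = \dim \ker - \dim \text{coker}$ carries the correct sign. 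Neither is deep.
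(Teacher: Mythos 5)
Your proof is correct and takes essentially the same route as the paper's: restrict to the finite summand \(\widetilde{C}(K,1)\), note that its first term has one more summand (each of rank \(\geq 1\)) than its second term (each \(\cong \Z\)), and let \(\hfhat(K_1)\cong\Z\) force \(A_n\cong\Z\) and the uniqueness of a doubly-trivial index. The paper compresses the rank-and-torsion bookkeeping into ``the only way this can happen is if \(A_n\cong\Z\),'' which your kernel/cokernel accounting simply makes explicit.
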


\begin{proof}
Since \(Z\) is an L-space, \(B_n \cong \Z\) for every \(n \in
\Z\). Thus \(A_n \cong B_{n} \cong \Z \) for all \(n> N_+\) and \(A_n
\cong B_{n+1} \cong \Z\) for all \(n <N_-\). For the intermediate values of
\(n\), we consider the complex \(\widetilde{C}(K,1)\). 
The first term in this complex is the direct sum of the \(A_n\) for
\(N_-\leq n \leq N_+\), while the second term is
 the direct sum of the \(B_n\) for \(N_-+1 \leq n \leq N_+\). In
 particular, the first term has one more summand than the second. Now
 each \(A_n\) has rank \(\geq 1\), and each \(B_n\) has rank \(1\). On
 the other hand, we have 
\begin{equation*}
H(\widetilde{C}(K,1)) \cong H(C(K,1)) \cong \hfhat(K_1) \cong \Z
\end{equation*}
since \(K_1\) is an L-space. The only way this can happen is if 
 \(A_n \cong \Z\) for each \(N_-\leq n\leq N_+\). This proves the first claim. For the
second, note that if  both \(\pi_n^+\) and \(\pi_n^-\) are trivial for two
different values of \(n\), then the homology of \(\widetilde{C}(K,1)\) must
have  rank \(\geq 2\). 
\end{proof}

\begin{lem}
\label{Lem:pp2}
If \(K_{1}\) is an L-space, 
either \(\HFK(K) \cong \Z^p\) or \(\HFK(K) \cong \Z^{p+2}\).
\end{lem}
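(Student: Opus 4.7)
My plan is to reduce the statement to a rank--counting argument exploiting Lemma~\ref{Lem:An}. Since $Z$ is an L-space, $\hfhat(Z,\spi_n)\cong\Z$ for every $\spi_n$. The spectral sequence from $\HFK(K,\spi_n)$ to $\hfhat(Z,\spi_n)$ shows the rank of each summand is odd, say $1+2m_n$ with $m_n\geq 0$, giving
\begin{equation*}
\rank \HFK(K)=p+2M,\qquad M:=\sum_n m_n\geq 0.
\end{equation*}
The lemma will follow from the bound $M\leq 1$.

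The idea is to read off $m_n$ from the complex $(C_n,d_n)$ introduced earlier. Indeed, $(C_n,d_n)$ has $E_1$-page equal to the associated graded $\HFK(K,\spi_n)$ of rank $1+2m_n$ and converges to $A_n$; by Lemma~\ref{Lem:An}, $A_n\cong\Z$, so this spectral sequence has exactly $m_n$ cancelling pairs. Each such pair consists of two generators $\x,\y$ at Alexander gradings $j_1<j_2$ with $j_1\equiv j_2\equiv n\pmod p$, linked through a combination of the $d_+$ and $d_-$ differentials of $\widehat{CF}(K)$ and $\widehat{CF}(-K)$. Writing each $\pi^\pm_n$ as multiplication by integers $\alpha_n,\beta_n$, I will argue that the presence of a cancelling pair in $\spi_n$ produces a fold $n'\in[j_1,j_2]$ at which both $\alpha_{n'}$ and $\beta_{n'}$ vanish: the ``pivot'' of the pair sits at that fold, so its image under $d_{n'}=d_0+d_++d_-$ forces the would-be survivor in $A_{n'}$ to become a boundary in both $\widehat{CF}(K,\spi_{n'})$ and $\widehat{CF}(-K,\spi_{n'-a})$.

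The bound $M\leq 1$ then splits into two parts. First, I will check that $m_n\geq 2$ for a single spin$^c$ structure is incompatible with the first conclusion of Lemma~\ref{Lem:An}: two cancelling pairs in $\spi_n$, interpreted in $(C_{n'},d_{n'})$ for a suitable intermediate fold $n'$, produce two independent $d_{n'}$-cycles that survive in $A_{n'}$, yielding $\rank A_{n'}\geq 3$ rather than $1$. Second, if two different spin$^c$ structures $\spi_{n_1},\spi_{n_2}$ both have $m\geq 1$, then the preceding paragraph yields two distinct fold values $n_1',n_2'$ at which both $\pi^{\pm}$ vanish, contradicting the second conclusion of Lemma~\ref{Lem:An}.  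Together these give $M\leq 1$, so $\HFK(K)$ has rank either $p$ or $p+2$.

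The main technical obstacle is the first of these two claims. To make the production of an $A_{n'}$ of rank $\geq 3$ precise, one needs to respect the conjugation-type symmetry relating the differentials $d_+$ on $\widehat{CF}(K)$ and $d_-$ on $\widehat{CF}(-K)$ (equivalently, the interplay between the survivor of $(d_0+d_+)$ and the survivor of $(d_0+d_-)$). Once that symmetry is in place, the straightforward cycle-counting in $(C_{n'},d_{n'})$ at a fold $n'$ lying between the Alexander gradings of the two cancelling pairs suffices; without it, one can write down combinatorial differential data that would spuriously allow rank $p+4$, so the Floer-theoretic duality is essential to the argument.
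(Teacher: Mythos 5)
Your overall architecture --- each $\HFK(K,\spi_k)$ has odd rank $1+2m_k$ because it converges to $\hfhat(Z,\spi_k)\cong\Z$, and then Lemma~\ref{Lem:An} bounds $\sum_k m_k\le 1$ --- has the right shape and is close in spirit to the paper's argument. But the step you yourself flag as ``the main technical obstacle'' is a genuine gap, and the specific mechanism you propose for it is false. The paper closes exactly this gap by citing Lemmas 3.1 and 3.2 of \cite{OSLens}: given that every $A_n\cong\Z$ (the first conclusion of Lemma~\ref{Lem:An}), those lemmas force $\HFK(K,\spi_k)$ to be \emph{free} of rank $2m_k-1$, supported in gradings $n_1<\cdots<n_{2m_k-1}$, with the alternating ``zigzag'' differentials $d_+(y_{2j})=\pm y_{2j+1}$, $d_-(y_{2j})=\pm y_{2j-1}$. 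That structure theorem is proved from the anticommutation of the reduced differentials $d_+'$ and $d_-'$ together with the constraint $A_n\cong\Z$; it is not a consequence of a conjugation symmetry, and an appeal to ``Floer-theoretic duality'' does not substitute for it.

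Concretely, your claim (a) is wrong: two cancelling pairs in a single $\spinc$ structure do \emph{not} force some $A_{n'}$ to have rank $\ge 3$. Take the rank-five zigzag with generators $y_1,\dots,y_5$ in descending Alexander gradings and $d_+(y_2)=\pm y_3$, $d_+(y_4)=\pm y_5$, $d_-(y_2)=\pm y_1$, $d_-(y_4)=\pm y_3$. This satisfies $d_+'d_-'+d_-'d_+'=0$, has $H(d_+')\cong H(d_-')\cong\Z$, and every $A_n$ for it is $\Z$ (for instance $A_{n_3}$ is generated by $[y_3]$ alone, since $y_1$ and $y_5$ become boundaries there); so the first conclusion of Lemma~\ref{Lem:An} detects nothing. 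What this configuration actually produces is three values of $n$ (namely $n_2,n_3,n_4$) at which both $\pi_n^+$ and $\pi_n^-$ vanish, so it is ruled out by the \emph{second} conclusion of Lemma~\ref{Lem:An} --- the same mechanism as your claim (b). Your case split therefore leaves the single-$\spinc$, two-pair zigzag unhandled, while the configurations your claim (a) does catch (e.g.\ the ``inward'' rank-three complex $d_+(y_1)=\pm y_2$, $d_-(y_3)=\pm y_2$, for which $A_{n_2}\cong\Z^3$) are precisely the ones the structure theorem already excludes. Finally, a rank count alone does not give the statement as written: you must also rule out torsion in $\HFK(K)$, which again comes from the structure theorem (or from repeating the argument with $\Z/q$ coefficients for every prime $q$).
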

\begin{proof}
Consider the complex \(\CF(K,\spi_k)\) generated by those \({\bf x}\)
with \(A({\bf x}) \equiv k \ (p)\). By the previous lemma, we know
that the associated homology groups \(A_n \) (\(n \equiv k \ (p)\))
are all isomorphic to \(\Z\). This situation was studied by \Ozsvath
and \Szabo in Lemmas 3.1 and 3.2 of \cite{OSLens}. They show that
there is a series of integers \(n_1<n_2<\ldots<n_{2m_{k}-1}\)
such that \(\HFK(K,\spi_k,n_j) \cong \Z\), and that
\(\HFK(K,\spi_k,n)\) is trivial for all other values of
\(n \equiv k \ (p) \). 
Furthermore, if \(y_{j}\) is a  generator of the group in Alexander
grading \(n_j\), then 
\begin{align*}
d_+(y_{2j}) & = \pm y_{2j+1} \quad \quad  d_+(y_{2j+1})  = 0  \\
d_-(y_{2j}) & = \pm y_{2j-1} \quad \quad d_-(y_{2j-1})  = 0.
\end{align*}
From this, it is easy to see that the maps \(\pi_n^{\pm}\) are both
trivial for all  \(n_1 < n<  n_{2m_k-1}\), \(n \equiv k \ (p)\). If
\(m_k>1\), there is at least one such value of \(k\), and if
\(m_k>2\), there are more than one. By
the preceding lemma, we conclude that \(m_k = 1\) (and thus
\(\HFK(K,\spi_k) \cong \Z\)) for all but one value of \(k\), and that 
\(m_k=2\) (so \(\HFK(K,\spi_k) \cong \Z^3\)) for this value, if it
exists. 
\end{proof}

\begin{lem}
\label{Lem:2p}
Suppose  \(\HFK(K) \cong \Z^p\). Then \(K_{1}\) is an L-space if and
only if $$\text{\em{width}} \ \HFK(K)<2p.$$
\end{lem}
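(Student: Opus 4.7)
The plan is to apply the mapping cone formula (Theorem~\ref{Thrm:MappingCone}) in the especially transparent form forced by $\HFK(K)\cong \Z^p$, and then read off $\hfhat(K_1)$ by inspection.  The analysis in the proof of Lemma~\ref{Lem:pp2}, specialized to the case $m_k=1$ for every $k$, shows that for each $\spi_k\in\Spinc(Z)$ there is a unique Alexander grading $n_k$ with $\HFK(K,\spi_k,n_k)\cong \Z$; passing to a minimal chain-homotopy model we may assume $\CF(K,\spi_k)\cong \Z$ with trivial differential.  Consequently each $A_n\cong \Z$, and the projections are determined by a single bit each: writing $k(n)=n\bmod p$, the map $\pi_n^+$ is an isomorphism when $n\geq n_{k(n)}$ and zero otherwise, while $\pi_n^-$ is an isomorphism when $n\leq n_{k(n)}$ and zero otherwise.

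Label each $n\in \Z$ by $R$, $L$, or $B$ according as only $\pi_n^-$, only $\pi_n^+$, or both projections are nonzero.  With this labeling $C(K,1)$ becomes a rank-one-per-slot bipartite complex whose incidence graph has an edge $\alpha_n\to \beta_n$ when the marker at $n$ is $L$ or $B$ and an edge $\alpha_n\to \beta_{n+1}$ when it is $R$ or $B$.  This graph decomposes into finite connected components and, up to sign, the differential on each component is an invertible staircase matrix, so the homology contribution of a component $c$ is $\Z^{\,\left||\alpha_c|-|\beta_c|\right|}$.  A short inspection of the allowed marker patterns shows that each imbalance $|\alpha_c|-|\beta_c|$ lies in $\{-1,0,+1\}$: it equals $+1$ precisely for blocks of shape $R\,B^j\,L$ ($j\geq 0$), equals $-1$ precisely for blocks of shape $L\,B^j\,R$ ($j\geq 0$, including the degenerate case $j=0$ where an isolated $\beta_n$ appears between adjacent $L$ and $R$), and equals $0$ otherwise.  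Since $\chi(\hfhat(K_1))=1$ and both $R$ and $L$ markers occur (for $n\to -\infty$ and $n\to+\infty$ respectively), the surgery $K_1$ is an L-space if and only if no $LB^jR$ block occurs; equivalently, the subsequence of $R$'s and $L$'s obtained by deleting all $B$'s has all $R$'s preceding all $L$'s.

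It remains to identify this combinatorial condition with $\text{width}\,\HFK(K)<2p$.  The ``$R$'s before $L$'s'' property is equivalent to the existence of a threshold $T\in \Z$ such that no $n<T$ is marked $L$ and no $n\geq T$ is marked $R$.  Testing the first requirement on $n=n_k+p$ and the second on $n=n_k-p$ shows that such a $T$ exists if and only if $T-p\leq n_k\leq T+p-1$ for every $k$, if and only if $\max_k n_k-\min_k n_k\leq 2p-1$, if and only if $\text{width}<2p$.  The main place where care is needed is the enumeration of component shapes and the identification of their imbalances with the $RB^jL$/$LB^jR$ substring structure; once that combinatorial dictionary is in place, the residue-class calculation in the last step is essentially automatic.
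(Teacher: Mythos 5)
Your argument is correct and follows essentially the same route as the paper's proof: you reduce to the rank-one mapping cone with $\pi_n^{\pm}$ determined by the single Alexander grading $n_k$ in each $\spinc$ structure, decompose $C(K,1)$ into interval-type components (your $R/L/B$ markers are the paper's $-/+/o$), observe that only the $[+,-]$-type blocks create excess homology, and translate ``all $R$'s before all $L$'s'' into $\max_k n_k - \min_k n_k < 2p$. The only cosmetic difference is that you invoke $\chi(\hfhat(K_1))=1$ to count the surplus, where the paper simply notes directly which marker patterns yield a unique nontrivial summand.
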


\begin{proof}
The argument in the preceding lemma shows that 
for each \(k \in \Z/p\), there is a  unique
\(n_k \equiv k \ts (p)\)
with \(\HFK(K,n_k) \cong \Z\), and that all the other groups
\(\HFK(K,n)\) vanish. From this, we see that \(A_n \cong \Z\) for all
\(n\), and that  \(\pi_n^+\) is an isomorphism for all \(n \geq
n_k\), \(n \equiv k \ (p)\) and vanishes for all \(n <n_k\). 
Similarly,  \(\pi_-^n\) is an isomorphism for all \(n \leq
n_k\),    \(n \equiv k \ (p)\), and vanishes for all \(n>n_k\). 

\begin{figure}
\includegraphics{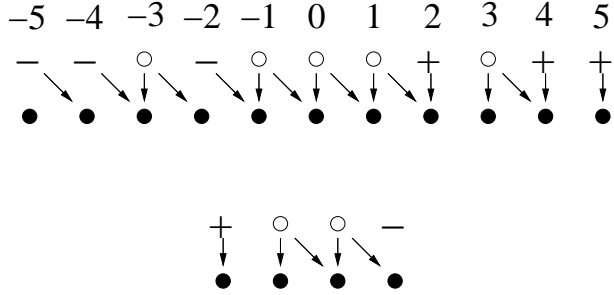}
\caption{\label{Fig:PlusMinusD} The top half of the figure is a diagram
  of the complex \(\widetilde{C}(K,1)\) for \(K=K(5,1,2)\). The
  numbers label the indices on the groups \(A_n\) and \(B_n\), which
  are indicated by \(+\)'s, \(-\)'s and \(o\)'s (for the \(A_n\)), and
  solid dots for the \(B_n\). Summands of type \([-,-]\), \([-,+]\), and
\([+,+]\) are visible. The bottom half of the figure shows a typical
summand of type \([+,-]\).}
\end{figure}

We can represent the chain complex \(C(K,1)\) by a diagram of the
type illustrated in Figure~\ref{Fig:PlusMinusD}. The upper row of the
diagram shows the \(A_n\), while the lower row shows the \(B_n\).
We represent the
group \(A_n\) by a \(+\) if \(\pi_n^+\) is nontrivial but \(\pi_n^- =0\), by 
a \(-\) if  \(\pi_n^-\) is nontrivial but \(\pi_n^+ =0\), and by an
\(o\) if both maps are nontrivial. (Thus \(A_{n_k+ip}\) is represented
by the sign of \(i\).) Each \(B_n\) is represented by a filled circle. Nontrivial maps are indicated by arrows, but
trivial ones are omitted. 

The complex \(C(K,1)\) can be decomposed into summands corresponding
to connected components of the diagram. Each summand corresponds to an
interval \([a,b]\), where \(a\) and \(b\) are labeled with a \(+\) or
\(-\), and all the intervening integers are labeled with an \(o\). The
homology of a summand of type \([+,+]\) or \([-,-]\) is trivial, but
the homology of a summand of type \([-,+]\) is \(\Z\) (supported
in the top row) and the homology of a summand of type \([+,-]\) is
\(\Z\) (supported in the bottom row). Thus there is a unique summand
with nontrivial homology if and only if  the \(-\)'s in the diagram
appear to the left of all the \(+\)'s. 

Suppose \(A_m\) is labeled with
a \(+\), and \(A_n\) is labeled with a \(-\) for some \(m<n\). Then
\(\HFK(K,m-ip) \cong \Z\) for some \(i>0\), and \(\HFK(K,n+ip) \cong
\Z\) for some \(i>0\), so \(\text{width} \ \HFK(K) > 2p\). Conversely,
if \(\HFK(K,m) \cong \HFK(K,n) \cong \Z\) with \(n-m > 2p\), then
\(A_{m+p}\) is labeled with a \(+\) and lies to the left of
\(A_{n-p}\), which is labeled with a \(-\). This proves the claim.
\end{proof}

\begin{lem}
\label{Lem:Tref}
Suppose  \(\HFK(K) \cong \Z^{p+2}\). Then \(K_{1}\) is an L-space if and
only if 
\begin{equation*}
\text{{\em width}} \ \HFK(K) =2p \quad \text{ and} \quad \tau(K,\spi_0)>0.
\end{equation*} 
\end{lem}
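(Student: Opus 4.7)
My plan parallels the diagrammatic argument in the proof of Lemma~\ref{Lem:2p}, enhanced by the distinguished $\spinc$ class carrying a staircase. By the proof of Lemma~\ref{Lem:pp2}, when $\HFK(K)\cong\Z^{p+2}$ there is exactly one $\spinc$ class (which is $\spi_{0}$ under the symmetric normalization of $\DBar(K)$) with $\HFK\cong\Z^{3}$, supported by staircase generators $y_{1},y_{2},y_{3}$ at Alexander gradings $n_{1}<n_{2}<n_{3}$; each remaining class carries $\HFK\cong\Z$ at a single grading $m_{k'}$. A direct computation using the staircase differentials gives, for $n\equiv 0\pmod{p}$, that $A_{n}\cong\Z$ is labelled ``$-$'' (only $\pi_{n}^{-}\neq 0$) for $n\leq n_{1}$, ``$x$'' (both $\pi_{n}^{\pm}=0$) for $n_{1}<n<n_{3}$, and ``$+$'' (only $\pi_{n}^{+}\neq 0$) for $n\geq n_{3}$; for $k'\neq 0$ the labels are the usual ``$-$''/``$o$''/``$+$'' pattern around $m_{k'}$ from Lemma~\ref{Lem:2p}.

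For the forward direction, suppose $K_{1}$ is an L-space. Lemma~\ref{Lem:An} permits at most one ``$x$'', and since $n_{2}$ lies strictly between $n_{1}$ and $n_{3}$ (all congruent mod $p$), this forces $n_{3}-n_{1}=2p$ and $n_{2}=n_{1}+p$. The isolated $A_{n_{2}}$ contributes a $\Z$ summand to $\hfhat(K_{1})$, and every other summand of the mapping cone $C(K,1)$ must therefore be trivial. This rules out any $m_{k'}$ outside $[n_{1},n_{3}]$: such a stray label would place a ``$-$'' just past the ``$+$'' at $n_{3}$ (or the mirror situation at $n_{1}$), producing either an orphan $B$-vertex or a non-trivial $[+,-]$ or $[-,+]$ summand with extra homology. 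Hence $\text{width}\,\HFK(K)=2p$, and the symmetric normalization then forces $n_{2}=0$ and $n_{3}=p$, giving $\tau(K,\spi_{0})=n_{3}>0$.

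For the converse, assuming $\text{width}=2p$ and $\tau(K,\spi_{0})>0$, the staircase sits at $\{-p,0,p\}$ and each $m_{k'}$ lies in $[-p+1,p-1]$, paired by $m_{k'}+m_{p-k'}=0$ from the $\DBar$ symmetry. I would show that the mapping cone diagram decomposes as the isolated ``$x$'' at $A_{0}$ together with ``triangular'' summands of the form classified in Lemma~\ref{Lem:2p}, where a summand may additionally be capped by the ``$x$'' in place of a ``$-$''. Each non-isolated summand has a chain differential representable by an upper-triangular matrix with $\pm 1$ on the diagonal, and therefore has trivial homology. Summing gives $\hfhat(K_{1})\cong\Z$. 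The main obstacle is the combinatorics of this last step: the ``$x$'' breaks certain runs of ``$o$''s into two halves, and one must verify uniformly that each half closes off as a trivial summand rather than leaving an orphan $B$-vertex adjacent to the ``$x$''. Exploiting the pairing $m_{k'}+m_{p-k'}=0$ to handle the two sides of the ``$x$'' symmetrically should give a uniform treatment.
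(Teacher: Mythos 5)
Your proposal follows essentially the same route as the paper: isolate the rank-three staircase in $\spi_0$, use Lemma~\ref{Lem:An} plus the congruence $n_1\equiv n_2\equiv n_3 \pmod p$ and the symmetry of $\DBar$ to pin it to gradings $\{-p,0,p\}$ with $\tau(K,\spi_0)=p$, and then run the interval decomposition of $C(K,1)$ with one extra label (your ``$x$'', the paper's ``$*$'') at $A_0$; your forward direction is complete and matches the paper's. The step you flag as the main obstacle in the converse closes immediately, and the pairing $m_{k'}+m_{p-k'}=0$ is not needed: width $=2p$ forces each $m_{k'}$ to be the unique representative of its residue class in the window $(-p,p)$, so every ``$+$'' label occurs at some $n\geq m_{k'}+p>0$ and every ``$-$'' at some $n\leq m_{k'}-p<0$. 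In particular $A_{-1}$ is ``$-$'' or ``$o$'' and $A_{1}$ is ``$+$'' or ``$o$'', so $B_0$ and $B_1$ are absorbed into summands of type $[-,x]$ and $[x,+]$, which are acyclic; no $[+,-]$ summand can occur because every ``$+$'' lies to the right of every ``$-$'', and no $[-,+]$ summand can occur because any interval running from a ``$-$'' (at $n<0$) to a ``$+$'' (at $n>0$) would contain the non-``$o$'' vertex $A_0$ in its interior. Hence the only summand with nontrivial homology is the isolated $\{A_0\}$, giving $\hfhat(K_1)\cong\Z$. One point worth making explicit in your converse: the hypothesis $\tau(K,\spi_0)>0$ is what fixes the direction of the staircase differentials (middle generator mapping to the two outer ones). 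The reflected configuration, with the outer generators mapping to the middle one, is also consistent with width $=2p$ but has $A_0\cong\Z^3$ and $\tau(K,\spi_0)=0$, and there it is $K_{-1}$ rather than $K_1$ that is the L-space; your ``direct computation using the staircase differentials'' silently assumes the correct configuration, so you should note that $\tau>0$ is what rules the other one out.
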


\begin{proof}
In this case, there is a unique \(\spinc\) structure  with
\(\HFK(K,\spi) \cong \Z^3\). The symmetry of \(\Delta_K(t)\) implies
that this is necessarily \(\spi_0\). If \(K_{1}\) is an L-space, the
 argument used in the proof of
Lemma~\ref{Lem:pp2} shows that the three \(\Z\) summands are 
in Alexander gradings \(-p,0\), and \(p\), and that \(\tau(K,\spi_0) =
p \). Conversely, if  \(\text{{width}} \ \HFK(K) =2p\), then 
 \(\HFK(K,\spi_0)\) must be 
 supported in Alexander gradings \(-p,0\), and \(p\). It is now
 easy to see that \(\pi_{ip}^+ \) is an isomorphism for
\(i>0\) and vanishes for \(i\leq 0\), and that  \(\pi_{ip}^- \) is an
isomorphism for \(i<0\) and vanishes for \(i \geq 0\). All the
other \(\spinc\) structures behave exactly as they did in the
proof of Lemma~\ref{Lem:2p}.

We represent the chain complex \(C(K,1)\) by the same sort of
diagram we used in the proof of Lemma~\ref{Lem:2p},
 labeling \(A_0\) with a \(*\), and 
each \(A_n\) (\(n \neq 0\)) by either a
\(-\), a \(+\), or an \(o\). As before, \(C(K,1)\)
decomposes into summands corresponding to intervals \([a,b]\) all of
whose interior points are labeled by an \(o\); however, there are now
some additional possibilities. First, \(A_0\) itself is always a
summand, with homology \(\Z\). Second, intervals of the form
\([-,*]\) and \([*,+]\) correspond to summands with trivial homology,
while those of the form \([+,*]\), and \([*,-]\) have homology
\(\Z\). From this, it is easy to see that \(K_{1}\) 
is an L-space if and only if the
groups labeled with a \(+\) all have \(n>0\), and those labeled with a
\(-\) all have \(n<0\). This happens if and
only if all the groups labeled with an \(o\) have \(-p<n<p\), which is
equivalent to the statement that \(\text{width} \ \HFK(K) = 2p\). 
\end{proof}

\begin{proof}[Proof of Proposition~\ref{Prop:LSCrit}]
If \(Y=K_{1}\), this is an immediate consequence of
Lemmas~\ref{Lem:An}--\ref{Lem:Tref}. If \(Y=K_{-1}\), we
consider the mirror knot \(\overline{K} \subset \overline{Z}\), for
which \(\overline{K}_{1} = \overline{K_{-1}} =
\overline{Y}\). \(\overline{Y}\) is an L-space if and only if \(Y\)
is, so the claim follows from the previous case, together with the 
identities \(\HFK(K,j) \cong \HFK(\overline{K},j)\) and
\(\tau(\overline{K}, \spi_0) =-\tau (K,\spi_0)\). (These are
well-known  when \(K\) is null-homologous, and their proof
carries over to our situation without change.)
\end{proof}

\begin{proof}[Proof of Theorem~\ref{Thrm:LHSCrit}]
By Theorem~\ref{Thrm:YiQHS}, \(\text{width} \ \HFK(K) < 2p\) if and
only if
\(g(K)< (p+1)/2\). In light of the proposition, it suffices to show
that \(\HFK(K) \cong \Z^p\) whenever the width of \(\HFK(K)\) is less
than \(2p\). Suppose that \(\text{width} \ \HFK(K) < 2p\), and that
there is some \(k\) for which \(\HFK(K,\spi_k) \not \cong \Z\).
 Then we can find a
prime \(q\) so that \(\dim _{\Z/q} \HFK(K,\spi_k;\Z/q) > 1\). By
hypothesis, \(\HFK(K,\spi_k; \Z/q)\) is supported in at most two 
Alexander gradings --- call them \(k\) and \(k-p\).

As described in section 3 of \cite{thesis}, we can find a {\it
  reduced} complex \((C',d_+')\) which is filtered chain homotopy equivalent to 
the complex \(\CF(K,\spi_k;\Z/q)\), and whose underlying group is
isomorphic to  
the direct sum of \(\HFK(K,j; \Z/q)\) for \(j\equiv k \ (p)\). 
 On the other hand, the fact that \(Z\) is an L-space, combined with
 the universal coefficient theorem tells us that
\begin{equation*}
H(\CF(K,\spi_k;\Z/q)) \cong \hfhat(Z,\spi_k;\Z/q) \cong \Z/q
\end{equation*}
It follows that 
$\dim_{\Z/q}  \HFK(K,k; \Z/q)  = \dim_{\Z/q} \HFK(K,k-p; \Z/q) \pm 1.$
Without loss of generality,
 let us assume that \(\HFK(K,k; \Z/q)\) is larger. Then the induced
 differential \(d_+': 
\HFK(K,k; \Z/q) \to \HFK(K,k-p; \Z/q) \) is surjective. 

Similarly, there is reduced complex \((C',d_-')\) which is filtered
chain homotopy equivalent to \(\CF(-K,\spi_{k-a};\Z/q)\), and the induced
differential \(d_-':\HFK(K,k-p; \Z/q) \to \HFK(K,k; \Z/q) \) must be
injective. Thus there is some 
is some \(x \in \HFK(K,k; \Z/q)\) for which \(d_-'d_+'x \neq 0\). Now
 \(d_-'x\)  vanishes for grading reasons, so \(d_-'d_+'x + d_+'d_-'x \neq
0\). 

On the other hand,  consider the
  bifiltered complex
  \(CFK^\infty(K)\),  defined in \cite{OS7},
  \cite{thesis}. In the corresponding reduced complex
\((C_\infty',d_\infty')\), 
 \(d_-'\) and \(d_+'\)  are the components of \(d_\infty'\) which
 lower the bifiltration by \((1,0)\) and \((0,1)\) respectively. 
Thus  \(d_-'d_+' + d_+'d_-'\) is the component of 
\((d_\infty')^2\) which lowers the filtration by
\((1,1)\). It follows that   
 \(d_-'d_+' + d_+'d_-' = 0\), so we have reached  a contradiction. 
\end{proof}

%

\section{The Fox -Brody theorem and applications}
\label{Sec:FoxBrody}

In this section, we prove Theorems~\ref{Thrm:Simple} and
\ref{Thrm:MinGenus}. The main ingredient is an old theorem of Fox
and Brody. To state it, recall that if \(K \subset M\) is a knot in a
three-manifold, the Alexander polynomial \(\Delta_K\) is most naturally
viewed as an element of the group ring  \(\Z[H_1(M-K)]\). 

\begin{thrm}[The Fox-Brody Theorem]
\cite{Brody} Suppose that \(K\subset M\) is a knot in a
three-manifold and that \(H_1(M-K)\) is torsion-free. If 
\(
i_*:\Z[H_1(M-K)] \to \Z[H_1(M)] 
\)
is the map induced by inclusion, then the ideal generated by
\(i_*(\Delta_K)\) depends only on the class of \([K]\)  in \(H_1(M)\). 
\end{thrm}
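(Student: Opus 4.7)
The plan is to recognize $(i_*(\Delta_K)) \subseteq \Z[H_1(M)]$ as the $0$-th Fitting ideal of a $\Z[H_1(M)]$-module whose structure depends only on the class $[K]$. Up to units, $\Delta_K$ generates the $0$-th Fitting ideal of the Alexander module $\mathfrak{A}(K) = H_1(\widetilde{M-K};\Z)$, where $\widetilde{M-K}$ is the universal abelian cover. Since Fitting ideals commute with base change, $i_*(\Delta_K)$ generates the $0$-th Fitting ideal of
\begin{equation*}
B_K := \mathfrak{A}(K)\otimes_{\Z[H_1(M-K)]} \Z[H_1(M)].
\end{equation*}
The goal is therefore to show $\mathrm{Fitt}_0(B_K)$ depends only on $[K] \in H_1(M)$.

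To carry this out geometrically, let $p:\bar{M}\to M$ be the cover with deck group $H_1(M)$, and set $\bar{K} = p^{-1}(K)$. A Mayer--Vietoris argument shows $\ker(i_*)$ is the cyclic subgroup generated by the meridian $m$, so $\widetilde{M-K}\to \bar{M}-\bar{K}$ is a $\Z$-cover with deck group $\langle m\rangle$. Its Wang sequence, together with the connectedness of $\widetilde{M-K}$, gives
\begin{equation*}
0 \to B_K \to H_1(\bar{M}-\bar{K}) \to \Z \to 0.
\end{equation*}
Meanwhile, the long exact sequence of the pair $(\bar{M}, \bar{M}-\bar{K})$ reads
\begin{equation*}
H_2(\bar{M}) \xrightarrow{\phi} H_2(\bar{M}, \bar{M}-\bar{K}) \to H_1(\bar{M}-\bar{K}) \to H_1(\bar{M}) \to 0.
\end{equation*}
By excision and the Thom isomorphism, $H_2(\bar{M}, \bar{M}-\bar{K}) \cong \Z[H_1(M)/\langle [K]\rangle]$ as a $\Z[H_1(M)]$-module; the connecting map $\phi$ is intersection with $\bar{K}$, and by the projection formula $\sigma\cdot \bar{K} = (p_*\sigma)\cdot [K]$, so $\phi$ is determined by $[K]\in H_1(M)$. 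The groups $H_1(\bar{M})$ and $H_2(\bar{M})$ depend only on $M$. Thus every term of this sequence other than $H_1(\bar{M}-\bar{K})$ itself is manifestly determined by the pair $(M,[K])$.

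The main obstacle is extracting $\mathrm{Fitt}_0(B_K)$ from this data, since Fitting ideals are not invariants of extensions and the two exact sequences above do not by themselves pin down $B_K$ up to isomorphism. The resolution is to work at the level of presentation matrices: starting from a CW-decomposition of $\bar{M}$ in which $\bar{K}$ is a subcomplex, write down an explicit $\Z[H_1(M)]$-valued presentation matrix for $B_K$ as the image under $i_*$ of a Fox-calculus Alexander matrix, and verify that the ideal generated by its maximal minors is determined by $\phi$ and by data intrinsic to $M$. This matrix manipulation, performed in Brody's original Fox-calculus argument in \cite{Brody}, is precisely what shows that elementary row and column operations (whose net effect is to change the matrix but preserve the maximal-minor ideal) reduce any two presentations coming from knots in the same homology class to a common normal form, giving the theorem.
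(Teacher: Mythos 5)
The paper does not actually prove this statement: it is quoted from Brody's paper and used as a black box, so there is no internal argument to compare yours against. Judged on its own terms, your proposal assembles sensible machinery --- the Wang sequence exhibiting \(B_K\) as the coinvariants of the Alexander module under the meridian subgroup, and the long exact sequence of the pair \((\bar{M},\bar{M}-\bar{K})\) --- but it is not a proof, for two reasons. First, the opening reduction is already problematic in the stated generality. The Alexander polynomial is defined as the \emph{gcd} of the maximal minors of the Alexander matrix; when \(H_1(M-K)\) is torsion-free of rank greater than one, the relevant elementary ideal is typically non-principal, \(\Delta_K\) only generates its divisorial hull, and ``gcd of minors'' does not commute with the base change \(i_*\) --- indeed \(\Z[H_1(M)]\) is not even a domain when \(H_1(M)\) is finite. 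In the rank-one case the paper actually needs, the identification \((\Delta_K)=\mathrm{Fitt}_0(\mathfrak{A}(K))\) does hold, but only because the maximal minors are \(\Delta_K\cdot\frac{t^{|a_i|}-1}{t-1}\) and these cofactors generate the unit ideal when \(\gcd(|a_i|)=1\) (essentially the computation in Proposition~\ref{Prop:Chi}); you assert the identification without any such argument.

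Second, and more seriously, the final paragraph is where the theorem lives, and it is not an argument. You correctly observe that the two exact sequences do not determine \(B_K\) up to isomorphism, promise a ``normal form'' for presentation matrices, and then justify it by appeal to ``Brody's original Fox-calculus argument in \cite{Brody}'' --- that is, to the very result being proven. Nothing in your write-up shows that two knots in the same homology class yield presentation matrices with the same maximal-minor ideal. Relatedly, the projection formula only controls the \emph{total} intersection number \(\sigma\cdot\bar{K}=(p_*\sigma)\cdot[K]\); the connecting map \(\phi\) takes values in \(\Z[H_1(M)/\langle[K]\rangle]\) and records the intersection with each component of \(\bar{K}\) separately, and showing that this refined equivariant datum depends only on \([K]\) requires lifting a homology between the two knots to \(\bar{M}\), which you have not addressed. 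As written, the proposal is a plausible framework plus a deferral of the essential step.
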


In other words, 
 if \(K_1\) and \(K_2\) are knots representing
the same homology class in \(M\), then \(i_*(\Delta_{K_1}) =
i_*(\Delta_{K_2})\) up to multiplication by units in the group ring 
\(\Z[H_1(M)]\). When  \(H_1(M) \cong
\Z/p\), this amounts to
saying that  \(\Delta_{K_1}(t) = \pm t^k \Delta_{K_2}(t)\) in the ring
\(\Z[\Z/p] \cong \Z[t]/(t^p-1)\). This uncertainty can presumably be
eliminated using the Turaev torsion ({\it c.f.} Theorem VII.1.4 in
\cite{TuraevTorsion}, which unfortunately does not cover our
situation.) However, in this case  we can achieve the
same result by elementary means. 

\begin{lem}
Suppose that \(H_1(Z) \cong \Z/p\), and that \(K_1\) and \(K_2\) are primitive
knots representing the same homology class in \(Z\). If we normalize 
\(\Delta_{K_i}(t)\) \((i=1,2)\) so that \(\Delta_{K_i}(1) = 1\) and  \(\Delta_{K_i}(t^{-1}) =
\Delta_{K_i}(t)\), then  \(i_*(\Delta_{K_1}) = i_*( \Delta_{K_2})\).
\end{lem}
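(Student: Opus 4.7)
The plan is to start from the Fox--Brody theorem, which gives $i_*(\Delta_{K_1}) \equiv \pm t^k \cdot i_*(\Delta_{K_2}) \pmod{t^p - 1}$ for some $k \in \Z$, and then use the two normalization conditions to force the unit $\pm t^k$ to equal $1$. Throughout, let $R = \Z[t]/(t^p - 1)$, with augmentation $\epsilon : R \to \Z$ sending $t \mapsto 1$ and involution $\iota : R \to R$ sending $t \mapsto t^{-1}$.

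I first handle the sign: applying $\epsilon$ to the Fox--Brody relation, the hypothesis $\Delta_{K_i}(1) = 1$ forces $1 = \pm 1$, so the sign is $+$. To constrain $k$, the symmetry $\Delta_{K_i}(t^{-1}) = \Delta_{K_i}(t)$ makes both $i_*(\Delta_{K_i})$ fixed by $\iota$; applying $\iota$ to $i_*(\Delta_{K_1}) = t^k \cdot i_*(\Delta_{K_2})$ thus yields $(t^{2k} - 1) \cdot i_*(\Delta_{K_2}) \equiv 0 \pmod{t^p - 1}$. Working in $\Z[t]$ and setting $d = \gcd(2k, p)$, the polynomials $(t^p - 1)/(t^d - 1)$ and $(t^{2k} - 1)/(t^d - 1)$ are coprime and monic, so $(t^p - 1)/(t^d - 1)$ must divide $\Delta_{K_2}(t)$ in $\Z[t]$. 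Evaluating at $t = 1$ gives $p/d \mid \Delta_{K_2}(1) = 1$, forcing $d = p$, i.e.\ $p \mid 2k$.

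If $p$ is odd this already gives $p \mid k$, so $t^k = 1$ in $R$ and the lemma follows. When $p$ is even, however, we are left with the possibilities $k \equiv 0$ or $k \equiv p/2 \pmod{p}$, and the case $k \equiv p/2$ is the main obstacle: it asserts $i_*(\Delta_{K_1}) = t^{p/2} \cdot i_*(\Delta_{K_2})$, and this is genuinely different from $i_*(\Delta_{K_2})$ in general (the equation $t^{p/2} \cdot i_*(\Delta_{K_2}) \equiv i_*(\Delta_{K_2}) \pmod{t^p - 1}$ would require $t^{p/2} + 1 \mid \Delta_{K_2}(t)$ in $\Z[t]$, and evaluating at $t = 1$ gives $2 \mid 1$).

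To rule out $k \equiv p/2 \pmod{p}$ I will use a parity argument on the coefficients of the reduction. Writing $\Delta_{K}(t) = \sum_i a_i t^i$ with $a_i = a_{-i}$, the normalization $\Delta_K(1) = a_0 + 2 \sum_{i > 0} a_i = 1$ forces $a_0$ to be odd. The coefficient of $t^0$ in $i_*(\Delta_K) \in R$ is $c_0 = a_0 + 2(a_p + a_{2p} + \cdots)$, which is odd. On the other hand, since $p$ is even the integers $i$ with $i \equiv p/2 \pmod{p}$ come in distinct pairs $\{i, -i\}$, each contributing $a_i + a_{-i} = 2 a_i$, so the coefficient $c_{p/2}$ of $t^{p/2}$ in $i_*(\Delta_K)$ is even. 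But multiplication by $t^{p/2}$ swaps the coefficients in positions $0$ and $p/2$ of any element of $R$, so the supposed equality $i_*(\Delta_{K_1}) = t^{p/2} \cdot i_*(\Delta_{K_2})$ would equate the odd constant term of $i_*(\Delta_{K_1})$ with the even coefficient $c_{p/2}$ of $i_*(\Delta_{K_2})$. This contradiction rules out $k \equiv p/2$ and completes the argument.
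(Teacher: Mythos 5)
Your proof is correct and follows essentially the same route as the paper: Fox--Brody plus evaluation at \(t=1\) to fix the sign, the symmetry \(\Delta(t^{-1})=\Delta(t)\) to force \(p \mid 2k\), and the identical parity argument on the coefficients of \(t^0\) and \(t^{p/2}\) to exclude \(k \equiv p/2\) when \(p\) is even. The only cosmetic difference is that you derive \(p \mid 2k\) algebraically, via \(\gcd(t^{2k}-1,t^p-1)\) and the constraint \(\Delta_{K_2}(1)=1\), where the paper phrases the same step geometrically as reflections and rotations of a diagram on the \(p\)th roots of unity.
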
 

\begin{proof}
As noted above, the Fox-Brody theorem implies that 
 \(i_*(\Delta_{K_1}(t)) = \pm t^k
i_*(\Delta_{K_2}(t))\) in \(\Z[\Z/p]\). The requirement
that \(\Delta_{K_1}(1) = \Delta_{K_2}(1) = 1\) ensures that the sign
is positive. To see that \(k=0\), we
view the polynomial \(i_*(\Delta_{K_1}(t))\) as assigning a number to each
\(p\)th root of unity in the complex plane. The symmetry of  \(\Delta_{K_1}\)
says that the resulting diagram is invariant under reflection across
the real axis, while the symmetry of \(\Delta_{K_2}\)
implies that the diagram is also invariant under reflection about some
other axis. If the two axes differ, then
the diagram is invariant under the composition of the two
reflections, which is a nontrivial rotation. If the order of this
rotation is \(m\), then \(\Delta_{K_1}(1)\) must be divisible by
\(m\). This contradicts the fact that \(\Delta_{K_1}(1)=1\), so the
two axes are  the same. If \(p\) is odd, it follows
that \(k \equiv 0 \ (p)\), while if \(p\) is even, either \(k \equiv 0\) or
\(k \equiv p/2
\ (p)\). To eliminate the second possibility, note that since
\(\Delta_K(1) = 1\), the coefficient of \(t^0\) in \(\Delta_{K_1}(t)\)
  must be odd. This implies that  the coefficient of \(t^0\)
  in \(i_*(\Delta_{K_1}(t))\) is odd, while the coefficient of \(t^{p/2}\)
  is  even. But if \(k \equiv p/2 \ (p)\), the same argument applied to
  \(\Delta_{K_2}(t)\) shows that the coefficient of \(t^0\)
  in \(i_*(\Delta_{K_1}(t))\) is even, while the coefficient of \(t^{p/2}\)
  is  odd.
\end{proof}

If \(K\subset Z \) is a primitive knot of order \(p\), recall from 
Proposition~\ref{Prop:Chi} that 
\begin{equation*}
\DBar(K) = \Delta_K(t) \cdot \frac{t^{p/2}-t^{-p/2}}{t^{1/2}-t^{-1/2}}.
\end{equation*}
 is the graded Euler characteristic of \(\HFK(K)\). 

\begin{cor}
\label{Cor:ChiDiv}
Suppose that \(H_1(Z) \cong \Z/p\), and that \(K_1,K_2 \subset Z\)
are two primitive knots in the same homology class. Then 
$\DBar(K_1)-\DBar(K_2)$
is divisible by \((t^p-1)^2\). 
\end{cor}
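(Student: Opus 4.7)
The plan is to bootstrap directly from the preceding lemma, which is the precise strengthening of the Fox--Brody theorem that we need. That lemma already gives $i_*(\Delta_{K_1}) = i_*(\Delta_{K_2})$ in $\Z[\Z/p]\cong\Z[t]/(t^p-1)$, which is exactly the statement that the symmetrically normalized polynomials satisfy
\[
\Delta_{K_1}(t)-\Delta_{K_2}(t) = (t^p-1)\,Q(t)
\]
for some Laurent polynomial $Q(t)$. This accounts for one factor of $t^p-1$ in the desired divisibility.

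The second step is to extract an extra factor of $t-1$ from $Q(t)$, using the symmetric normalization. Set $D(t):=\Delta_{K_1}(t)-\Delta_{K_2}(t)$. Since $\Delta_{K_i}(t^{-1})=\Delta_{K_i}(t)$, the difference $D$ is also symmetric, so
\[
(t^p-1)Q(t) \;=\; D(t) \;=\; D(t^{-1}) \;=\; (t^{-p}-1)Q(t^{-1}) \;=\; -t^{-p}(t^p-1)Q(t^{-1}),
\]
forcing $Q(t) = -t^{-p}Q(t^{-1})$. Evaluating at $t=1$ gives $Q(1) = -Q(1)$, so $Q(1)=0$ and we may write $Q(t)=(t-1)R(t)$.

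Finally, I assemble the pieces. Rewriting the formula for $\DBar$ as
\[
\DBar(K) \;=\; \Delta_K(t)\cdot t^{-(p-1)/2}\cdot\frac{t^p-1}{t-1},
\]
a direct substitution yields
\[
\DBar(K_1)-\DBar(K_2) \;=\; D(t)\cdot t^{-(p-1)/2}\cdot\frac{t^p-1}{t-1} \;=\; (t^p-1)^2\cdot t^{-(p-1)/2}\cdot R(t),
\]
which is manifestly divisible by $(t^p-1)^2$, as claimed.

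There is essentially no obstacle here: all the real content sits in the Fox--Brody corollary, which is already proved. The only point requiring care is that the Fox--Brody input naively produces a single factor of $t^p-1$, while the formula for $\DBar$ introduces a denominator of $t-1$; the symmetry of the Alexander polynomials is exactly what compensates, giving the needed second factor of $t^p-1$.
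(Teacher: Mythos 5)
Your proof is correct, and it follows the paper's overall strategy (the preceding lemma supplies one factor of \(t^p-1\), and the symmetry \(\Delta_{K_i}(t^{-1})=\Delta_{K_i}(t)\) supplies the extra factor of \(t-1\)), but the way you extract that extra factor is genuinely different and, I think, cleaner. The paper argues directly that \((t-1)^2\mid\delta\) where \(\delta=\Delta_{K_1}-\Delta_{K_2}\): it repeatedly divides out reciprocal pairs of roots \(a,a^{-1}\) with \(a\neq 0,\pm1\) to reduce to \(\delta_n=t^k(t+1)^a(t-1)^b\), uses symmetry to conclude \(b\) is even, and combines this with \((t-1)\mid(t^p-1)\mid\delta\) to get multiplicity at least \(2\) at \(t=1\); one then still has to note that \((t-1)^2\) and \((t^p-1)/(t-1)\) are coprime to conclude \((t-1)(t^p-1)\mid\delta\). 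You instead cancel \(t^p-1\) in the integral domain \(\Z[t,t^{-1}]\) to get the functional equation \(Q(t)=-t^{-p}Q(t^{-1})\) for the quotient \(Q=\delta/(t^p-1)\), and evaluation at \(t=1\) immediately gives \(Q(1)=0\), hence \((t-1)\mid Q\). This avoids both the passage to complex roots and the coprimality bookkeeping, at the cost of nothing; the only shared caveat is the usual one about half-integer powers of \(t\) when \(p\) is even (the unit \(t^{-(p-1)/2}\) is harmless for divisibility), which the paper glosses over in exactly the same way.
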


\begin{proof}
We must show  that
\(\delta =\Delta_{K_1}(t) - \Delta_{K_2}(t)\) is divisible by
\((t-1)(t^p-1)\). The Fox-Brody theorem tells us that
\((t^p-1)|\delta\). Thus we need only show \((t-1)^2|\delta\). By the
symmetry of \(\Delta_{K_1}\) and \(\Delta_{K_2}\), 
, we know that \(\delta(t^{-1})=\delta(t)\). Suppose
\(a \neq 0,\pm 1\) is a root of \(\delta\). Then \(a^{-1}\) is also a
root, and we can consider the polynomial \(\delta_1 =
\delta/(t-a)(t^{-1}-a)\), which is also symmetric. Iterating, we
eventually arrive at some  \(\delta_n\) which has no roots other than 
\(\pm 1\), and thus is of the form
$\delta_n = t^k(t+1)^a(t-1)^b.$
Substituting \(t=t^{-1}\) and equating, we see that \(b\) must be
even. Since we already know that \((t-1)|\delta\), this proves the claim.
\end{proof}

\begin{proof}[Proof of Theorem~\ref{Thrm:Simple}]
Let \(K'=K(p,q,k)\) be a primitive simple knot in \(L(p,q)\) (so
that \((k,p)=1\)). Then \(\DBar(K')\) has the following property (*):
for each \(i \in \Z/p\), there is a unique \(n \equiv i \ts (p)\) such
that the coefficient of \(t^n\) in \(\DBar(K')\) is
nonvanishing. It is easy to see that
 there is no other polynomial  congruent to 
\(\DBar(K')\) modulo \((t^p-1)^2\) which has this property. 

Suppose  that \(K \subset L(p,q)\) is another knot
representing the same homology class as \(K'\), and that \(K\)
admits an LHS surgery. Then by
Proposition~\ref{Prop:LSCrit},  \(\HFK(K)\) is isomorphic to either
\(\Z^p\) or \(\Z^{p+2}\). In the first case, \(\DBar(K')\) has
property (*), so by Corollary~\ref{Cor:ChiDiv}, we must have
\(\DBar(K) = \DBar(K')\). It follows that \(\HFK(K) \cong
\HFK(K')\), and thus that \(g(K)=g(K')\). Since \(K\) and \(K'\) are
in the same homology class, \(K'\) has a homology sphere surgery, and
 by Proposition~\ref{Prop:LSCrit}, this homology sphere is an
 L-space. 

Next, suppose \(\HFK(K) \cong \Z^{p+2}\). From  the
proof of Lemma~\ref{Lem:Tref}, we know that for \(k\neq 0\),
\(\chi(\HFK(K), \spi_k) = t^i\) for some \(-p<i<p\), and that 
\(\chi(\HFK(K), \spi_0) = t^{-p}-1+t^p\). In particular,
\(f(t) = \DBar(K) - t^{-p}(t^p-1)^2\) has property (*). It
follows that \(f(t)=\DBar(K')\) and thus  \(\text{width} \ts  \HFK(K') <
2p\). Applying  Proposition~\ref{Prop:LSCrit} once more,  we see that 
 \(K'\) has an LHS surgery. 
\end{proof}

\begin{proof}[Proof of Theorem~\ref{Thrm:MinGenus}]
By Corollary~\ref{Cor:ChiDiv}, we can write
\begin{equation*}
\DBar(K') = \DBar(K) + f(t)(t^{-p}-2+t^p),
\end{equation*}
where \(f(t)\) is some symmetric Laurent polynomial. 
By hypothesis, \(g(K)<(p+1)/2\), so the degree of \(\DBar(K)\) is less
than \(p\). It follows that either degree \(\DBar(K') \geq p\), so 
\(g(K') \geq (p+1)/2\), or \(f(t) = 0\), so
\(\DBar(K')=\DBar(K)\). In the latter case, we have 
\begin{align*}
g(K) & \geq \frac{1}{2}(\text{degree} \ \DBar(K')-p+1) \\
& = \frac{1}{2}(\text{degree} \ \DBar(K)-p+1) \\
& = g(K').
\end{align*} 
To see that the last equality holds, observe from the proof of
Theorem~\ref{Thrm:LHSCrit} that  \(\HFK(K)\) must be isomorphic to \(
\Z^p\), so \(\HFK(K)\) is determined by \(\DBar(K)\).   
\end{proof}

\subsection{Knots with width \({\bf = 2p}\)}

In this section, we consider knots \(K \subset L(p,q)\)  which have 
\(\text{width} \ \HFK (K) = 2p\) and admit an integer LHS surgery,
which we assume for the moment is \(K_1\). 
 By Theorem~\ref{Thrm:Simple}, each such
\(K\) is in the same homology class as a simple knot \(K'\) with
\(\text{width} \ \HFK(K') < 2p\). 
  \(K'\)  admits a \(\Z\)HS surgery \(K_1'\), which is an L-space by
Theorem~\ref{Thrm:LHSCrit}. Although
\(\hfhat(K_1) \cong \hfhat(K_1')\), the two spaces can be
distinguished by their \(d\)-invariants \cite{OS4}.

\begin{prop}
\label{Prop:dInvt}
With hypotheses as above, \(d(K_1) = d(K_1') - 2\). 
\end{prop}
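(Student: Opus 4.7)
\emph{Proof plan.} The strategy is to apply the mapping cone formula (Theorem~\ref{Thrm:MappingCone}) with $m=1$ to both $K$ and $K'$, compare the two reduced complexes $\widetilde C(K,1)$ and $\widetilde C(K',1)$, and track the absolute $\Q$-grading of the unique generator of homology. Since $K_1$ and $K_1'$ are L-space homology spheres, each of these homologies is a single $\Z$ supported in one grading, and that grading is the $d$-invariant.

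From the proof of Theorem~\ref{Thrm:Simple} we have $\DBar(K) = \DBar(K') + t^{-p}(t^p-1)^2$. Because $\HFK(K)$ and $\HFK(K')$ are free of ranks $p+2$ and $p$, the Alexander gradings of all generators are determined by this Euler characteristic. Thus $\HFK(K,\spi_k) \cong \HFK(K',\spi_k) \cong \Z$ in matching Alexander gradings for every $k \neq 0$, while in $\spi_0$ the knot $K$ has three generators in gradings $\{-p,0,p\}$ versus the single grading-$0$ generator of $K'$. Consequently the two mapping cones agree outside the $\spi_0$ part, and the comparison reduces to understanding the $\spi_0$ subcomplexes.

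By Lemma~\ref{Lem:Tref}, the $\spi_0$ piece of $\widetilde C(K,1)$ consists of three copies $A_{-p}, A_0, A_p \cong \Z$, where $A_0$ has both $\pi_0^\pm$ zero (so contributes a free $\Z$ directly to the homology of the cone), while $\pi^-_{-p}\colon A_{-p}\to B_{-p+1}$ and $\pi^+_p\colon A_p\to B_p$ are isomorphisms. In $\widetilde C(K',1)$ the $\spi_0$ piece is the single $A_0\cong\Z$ whose $\pi^\pm_0$ are isomorphisms onto $B_0$ and $B_1$. In each complex the overall homology is $\Z$: for $K$, the distinguished generator is that of the surviving $A_0$; for $K'$, it is a certain surviving baseline $B_n$-class not in the image of any $A$-term.

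The remaining step is to compute the absolute $\Q$-grading shift between these two generators. The grading on each $B_n = \hfhat(L(p,q),\spi_n)$ is the Ozsv{\'a}th--Szab{\'o} $d$-invariant of the lens space in $\spi_n$, and the grading on each $A_n$ is shifted from the associated $B_n$-grading by the explicit grading formula for the large-surgery cobordism (Section~4 of \cite{OSFrac}). Matching these shifts with the local ``right-handed trefoil'' model coming from the replacement $\Z\leadsto\Z^3$ in $\spi_0$, the surviving $A_0$-class for $K$ lies exactly two units of absolute grading below the surviving class for $K'$, yielding $d(K_1) = d(K_1')-2$. The main obstacle is carrying out this last grading computation: in particular, the hypothesis $\tau(K,\spi_0)>0$ from Lemma~\ref{Lem:Tref} is needed to pin down the sign of the shift as $-2$ rather than $+2$, and the mirror case $Y=K_{-1}$ is handled by passing to $\overline K \subset \overline Z$ and using that $d$-invariants change sign under orientation reversal, just as at the end of the proof of Proposition~\ref{Prop:LSCrit}.
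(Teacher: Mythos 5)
Your setup coincides with the paper's --- compare the mapping cones $C(K,1)$ and $C(K',1)$, which agree outside $\spi_0$, and track the absolute grading of the surviving generator --- but the proposal stops exactly where the proof has to happen. You write that ``the main obstacle is carrying out this last grading computation'' and then assert its answer; the missing idea is how to \emph{anchor} the absolute gradings of two different mapping cones to one another so that a comparison is meaningful at all. The paper does this by invoking the refinement of Theorem~\ref{Thrm:MappingCone} which identifies the inclusion $i_0\colon B_0 \to C(K,1)$ with the surgery cobordism map $\widehat{F}_{W,\spi_0}$. The absolute grading of $(i_0)_*(x)$, for $x$ a generator of $\hfhat(L(p,q),\spi_0)$, is then governed by the cobordism grading-shift formula, and it agrees with that of $(i_0')_*(x)$ because $W$ and $W'$ are homologically identical and $c_1(\spi_0)^2 = c_1(\spi_0')^2$ --- an equality which itself requires the conjugation-symmetry argument given in the paper. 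Only after this anchoring does ``inspecting the complex'' produce anything: the generator of $\hfhat(K_1)$ sits one grading below $(i_0)_*(x)$, while the generator of $\hfhat(K_1')$ sits one grading above $(i_0')_*(x)$, whence the difference of $2$. None of this mechanism appears in your outline, and the appeal to $\tau(K,\spi_0)>0$ to ``pin down the sign'' is not how the sign is actually determined.

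There is also a concrete error in your description of the generator for $K'$: you place it at ``a surviving baseline $B_n$-class not in the image of any $A$-term,'' i.e.\ a cokernel class in the bottom row. For an L-space surgery the unique homologically nontrivial summand of $C(K',1)$ is of type $[-,+]$, for which the map from the $A$-row to the $B$-row is \emph{surjective}; the homology is the kernel, supported in the top row (see the proof of Lemma~\ref{Lem:2p}). A bottom-row generator arises only from a $[+,-]$ summand, which is precisely the non-L-space configuration. Since the whole proposition reduces to whether each generator lies one step above or one step below the reference class in $B_0$, putting the $K'$ generator in the wrong row would flip one of the two $\pm 1$ shifts and yield the wrong relative grading.
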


\begin{proof}
The \(d\) invariant of \(K_1\) is  the absolute grading of the generator of \(\hfhat(K_1) \cong \Z\). 
To compare \(d(K_1)\) with \(d(K_1')\), we return to the mapping cone
theorem of \Ozsvath and Szab{\'o}. In addition to computing the Floer
homology of surgeries on \(K\), the mapping cone can be used to
determine the maps induced by the corresponding surgery cobordism. 
(This is not explicitly stated in \cite{OSFrac}, but the analogous
result for null-homologous knots may be found in \cite{OSInt}, and
the proof carries through without change.)
The precise statement is as follows:
for each \(n\in \Z\), there is an inclusion of chain complexes \(i_n:
B_n \to C(K,1)\). If  \(W:L(p,q) \to K_1\) is the surgery
cobordism, the set \(\Spinc (W)\) may be identified with \(\Z\) in
such a way that the 
induced map 
\begin{equation*}
\widehat{F}_{W,\spi_n} : \hfhat(L(p,q), \spi_n) \to \hfhat(K_1)
\end{equation*}
is equal (as a relatively graded map) to  the map induced by \(i_n\). 

Let \(x\) be a generator of \(\hfhat(L(p,q),\spi_0)\). Although
\((i_0)_*(x)\)  is trivial in homology, it still makes sense to talk
about its homological grading. Inspecting \(C(K,1)\), we see that  the
grading of the generator of \(\hfhat(K_1)\) is one less than that of
\((i_0)_*(x)\). On the other hand, a similar computation with
\(C(K',1)\) shows that the grading of the generator of
\(\hfhat(K_1')\) is one {\it more} than that of  \((i_0')_*(x)\).

 To complete the proof, we observe that the surgery cobordism \(W':L(p,q) \to K_1'\) has exactly the same homological properties as \(W\), and that \(c_1(\spi_0)^2 = c_1(\spi_0')^2\). (To see this, consider the conjugation symmetry, which acts as a reflection on the affine \(\Z\) graded sets \(\Spinc(W)\) and \(\Spinc(W')\). Both \(\spi_0\) and \(\spi_0'\) are \(\spinc\) structures nearest to the center of the reflection.)
Thus  \(\widehat{F}_{W,\spi_0} (x)\) and \(\widehat{F}_{W',\spi_0}(x) \) have the same absolute grading. 
\end{proof}

\begin{cor}
If Conjecture~\ref{Conj:Num} is true, then no 
\(K\subset L(p,q)\) with \(g(K)=(p+1)/2\) admits an \(S^3\) surgery.
\end{cor}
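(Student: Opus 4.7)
We argue by contradiction, combining Theorem~\ref{Thrm:Simple}, Conjecture~\ref{Conj:Num}, and the $d$-invariant identity of Proposition~\ref{Prop:dInvt}. Suppose $K\subset L(p,q)$ with $g(K)=(p+1)/2$ admits an $S^3$ surgery. By Theorem~\ref{Thrm:YiQHS}, the width of $\HFK(K)$ equals $2p$, so Proposition~\ref{Prop:LSCrit} forces $\HFK(K)\cong \Z^{p+2}$ and constrains the surgery slope to be $\pm 1$. Replacing $K$ by its mirror in $L(p,-q)$ if necessary, we may assume $K_1\cong S^3$, so that $d(K_1)=0$.

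Next, Theorem~\ref{Thrm:Simple} produces a simple knot $K'\subset L(p,q)$ in the same homology class as $K$ that admits an integer LHS surgery. Since $\HFK(K)$ has rank $p+2$ while any simple knot with width $<2p$ has $\HFK$ of rank $p$ (by Proposition~\ref{Prop:LSCrit}), the two knots have non-isomorphic knot Floer homology; we are therefore in the first alternative of Theorem~\ref{Thrm:Simple}, which forces $g(K')<(p+1)/2$. Conjecture~\ref{Conj:Num} then identifies $K'$ as belonging to one of the Berge or Tange families. Proposition~\ref{Prop:dInvt} now gives $d(K_1)=d(K'_1)-2$, and combined with $d(K_1)=0$ this forces $d(K'_1)=2$.

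The proof is completed by verifying that no simple knot $K'$ drawn from the Berge or Tange families produces $d(K'_1)=+2$. For a Berge-family simple knot, $K'_1$ is either $S^3$ (with $d=0$) or a manifold whose $d$-invariant is readily computed from the mapping cone applied to the explicit knot Floer homology of Section~\ref{Sec:Simple}, and in each enumerated case one checks that $d\neq 2$; for a Tange-family simple knot, $K'_1$ is the Poincar\'e sphere, and the orientation forced by the conventions entering Proposition~\ref{Prop:dInvt} yields $d=-2$. In every possibility $d(K'_1)\neq 2$, and the contradiction completes the argument.

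The main obstacle is this concluding family-by-family verification. Once Conjecture~\ref{Conj:Num} cuts the possibilities down to Berge's and Tange's enumerated lists, it becomes a finite check, but one requiring care with orientation conventions --- in particular, distinguishing $\Sigma(2,3,5)$ from $-\Sigma(2,3,5)$ via the sign of the $d$-invariant in the conventions of Proposition~\ref{Prop:dInvt} --- in order to rule out the case $d(K'_1)=+2$ uniformly.
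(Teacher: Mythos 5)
Your overall strategy is the same as the paper's: reduce to \(K_1=S^3\) by mirroring, use Theorem~\ref{Thrm:Simple} and Conjecture~\ref{Conj:Num} to identify the simple knot \(K'\) in the same homology class as Berge or Tange type, and then use Proposition~\ref{Prop:dInvt} to force \(d(K_1')=2\) and derive a contradiction. The Berge case goes through exactly as in the paper (\(K_1'=S^3\), so \(d(K_1')=0\neq 2\)). (A small imprecision: the ``first alternative'' of Theorem~\ref{Thrm:Simple} literally says only that \(g(K)=(p+1)/2\); the fact that \(g(K')<(p+1)/2\) comes from the width computation in its proof, not from the statement itself. This is harmless.)

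The genuine gap is in the Tange case. You assert that \(K_1'\) is the Poincar\'e sphere with \(d=-2\) and attribute the sign to ``the orientation forced by the conventions entering Proposition~\ref{Prop:dInvt}.'' But the sign here is not a matter of convention: both orientations of \(\Sigma(2,3,5)\) are L-space homology spheres, and a priori either could arise as \(K_1'\). If \(K_1'\) carried the orientation with \(d(K_1')=+2\), then Proposition~\ref{Prop:dInvt} would give \(d(K_1)=0\), which is perfectly consistent with \(K_1=S^3\), and the contradiction evaporates --- this is exactly the case your argument must exclude, and excluding it requires a substantive geometric input, not a bookkeeping check. The paper supplies this input by citing the main theorem of \cite{Tange2}: there is no positive surgery cobordism from \(\overline{\Sigma}\) (the Poincar\'e sphere oriented as \(+1\) surgery on the positive trefoil) to a lens space; reversing the cobordism, there is no positive surgery cobordism from a lens space to \(\Sigma\), so \(K_1'\) must be \(\overline{\Sigma}\), whose \(d\)-invariant is \(-2\) (whence \(d(K_1)=-4\neq 0\)). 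Without this theorem or an equivalent argument, the ``finite check'' you describe cannot actually be completed, so you should make the dependence on \cite{Tange2} explicit rather than folding it into a claim about conventions.
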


\begin{proof}
Suppose without loss of generality that \(K_1 = S^3\). (If
\(K_{-1}=S^3\), then we consider the mirror knot \(\overline{K} \subset
L(p,-q)\).) If Conjecture~\ref{Conj:Num}
 is true, the corresponding simple knot 
 \(K'\) must be of either Berge or Tange type. If \(K'\) is a Berge
 knot, then \(d(K_1')=d(S^3)=0\), so \(d(K_1)=-2\). Thus \(K_1\neq
 S^3\). 
On the other hand, if \(K'\) is a Tange knot then \(K_1'\)  
is either the Poincar{\'e} sphere or its orientation reverse. To
determine the orientation, we refer to the main theorem of 
\cite{Tange2}, which says that
there is no positive surgery cobordism from \(\overline{\Sigma}\) (the
Poincare sphere oriented as 
the result of \(+1\) surgery on the positive trefoil) to a lens space.  
 Reversing the direction of the cobordism, we see that
there is no positive surgery cobordism from a lens space to
\(\Sigma\). Thus \(K_1' = \overline{\Sigma}\). 
It is well-known that  \(d(\overline{\Sigma})=-2\), 
so by the proposition \(d(K_1) = -4\). Again, we conclude that
 \(K_1\neq S^3\). 
\end{proof}

\begin{cor}
Conjecture~\ref{Conj:Num} implies Conjecture~\ref{Conj:Real}.
\end{cor}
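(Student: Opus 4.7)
The plan is to run the dual knot and chain together Theorem~\ref{Thrm:Simple}, the previous Corollary (which uses Conjecture~\ref{Conj:Num}), and a \(d\)-invariant comparison. Start from \(K \subset S^3\) with integer surgery yielding \(L(p,q)\), and pass to the dual knot \(\widetilde{K} \subset L(p,q)\), which by construction admits an \(S^3\) surgery. Let \(K' \subset L(p,q)\) be the simple knot in the same homology class as \(\widetilde{K}\); then Theorem~\ref{Thrm:Simple}, applied to \(\widetilde{K}\), gives two mutually exclusive outcomes: either \(g(\widetilde{K}) = (p+1)/2\), or \(g(\widetilde{K}) = g(K')\) and \(\HFK(\widetilde{K}) \cong \HFK(K')\).

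The first outcome is exactly the situation ruled out by the previous Corollary under Conjecture~\ref{Conj:Num}, since \(\widetilde{K}\) admits an \(S^3\) surgery. Hence we are in the second case, so \(K'\) admits an integer LHS surgery with \(\HFK(K') \cong \HFK(\widetilde{K})\) and \(g(K') < (p+1)/2\). Now I apply Conjecture~\ref{Conj:Num} to the simple knot \(K'\), which forces \(K'\) to lie in Berge's or Tange's list.

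To exclude Tange, I will use the observation that knot Floer homology plus the homology class in \(L(p,q)\) determines the mapping cone complex \(C(K,m)\) of \Ozsvath{}--\Szabo{} for every allowable \(m\), and hence determines the absolute \(\Q\)--grading on \(\hfhat\) of the corresponding surgery. Since \(K'\) and \(\widetilde{K}\) share the same filtered \(\HFK\) and self-linking (by Lemma~\ref{Lem:LinkingForm}), the \(d\)--invariant of \(K'\)'s integer LHS surgery equals that of \(\widetilde{K}\)'s, which is \(d(S^3)=0\). Tange knots, however, produce the Poincar\'e homology sphere with one or the other orientation, both of which have \(|d|=2\). Thus \(K'\) is of Berge type.

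The final step is then tautological: a simple knot of Berge type in \(L(p,q)\) is, by construction, the dual in \(L(p,q)\) of a Berge knot \(K_B \subset S^3\) whose specified integer surgery is precisely \(L(p,q)\). Hence \(L(p,q)\) is realized by integer surgery on the Berge knot \(K_B\), which is Conjecture~\ref{Conj:Real}. The main potential obstacle is the claim that \(d\)--invariants of integer LHS surgeries are recovered from \(\HFK\) together with the homology class; this should follow from the statement of the mapping cone theorem together with the grading conventions fixed at the end of Section~\ref{Sec:HFK} (via the symmetric normalization of \(\Delta_K\)), so no new analysis beyond what is already invoked in the proof of Proposition~\ref{Prop:dInvt} is required.
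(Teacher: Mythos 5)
Your proposal is correct and follows essentially the same route as the paper: pass to the dual knot, invoke Theorem~\ref{Thrm:Simple} together with the preceding corollary to exclude the \(g=(p+1)/2\) case, apply Conjecture~\ref{Conj:Num} to the simple knot, and rule out the Tange alternative by the same \(d\)-invariant comparison (the paper likewise cites ``an argument very similar to the one used in the proof of Proposition~\ref{Prop:dInvt}'' to get \(d(K_1')=d(\widetilde{K}_1)=0\)). The only cosmetic difference is that the paper first normalizes by a mirror image so that \(\widetilde{K}_1=S^3\); otherwise the arguments coincide.
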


\begin{proof} Suppose \(K \subset S^3\) has an integer lens space
  surgery \(L(p,q)\), and let \(\widetilde{K} \subset L(p,q)\) be the
  dual knot. By considering the mirror image if necessary, we may
  assume \(\widetilde{K}_1 = S^3\). By Theorem~\ref{Thrm:Simple},
  the simple knot \(K'\) in the same homology class admits an LHS
  surgery and (assuming Conjecture~\ref{Conj:Num}), the previous
  corollary implies that \(g(\widetilde{K})=g(K')<(p+1)/2\). If
 Conjecture~\ref{Conj:Num} is true, then \(K'\) is either a Berge knot
 or a Tange knot. To rule out the second possibility, observe that
 an argument very similar to the one used in the proof of
 Proposition~\ref{Prop:dInvt} shows that \(d(K'_1) =
 d(\widetilde{K}_1) = d(S^3) = 0.\) Thus \(K'\) is of Berge type, and
 \(K_1'=S^3\). By considering the dual knot, we see that \(L(p,q)\) is
 realized by surgery on a Berge knot in \(S^3\). 
\end{proof}

\begin{proof}[Proof of Corollary~\ref{Cor:L4}]
It is well known \cite{Moser} that  \(L(4n+3,4)\) may be realized as
\(4n+3\) surgery on the positive \((2,2n+1)\) torus knot. The dual
knot in \(L(4n+3,4)\) is the simple knot \(K(4n+3,4,2)\). Suppose
\(K\subset S^3\) has an integer surgery which yields \(L(4n+3,4)\), and
let \(\widetilde{K}\) be the dual knot. Then
by Theorem~\ref{Thrm:Simple}, \(\widetilde{K}\) 
is in the same homology class as a
simple knot \(K'=K(4n+3,4,k)\) which also admits an integer LHS
surgery. By Lemma~\ref{Lem:SimpleHS}, \(k^2 \equiv  \pm 4 \ (4n+3) 
\), so \(k \equiv \pm
2 \ (4n+3)\). (Since \(4n+3 \equiv 3 \ (4)\), there are no solutions with
\(k^2 \equiv -4 \ (4n+3)\).) 

From Theorem~\ref{Thrm:Simple}, we know that 
 either \(g(\widetilde{K}) = g(K')=g(T(2,2n+1))=n\), 
or \(g(\widetilde{K}) = 2n+2\). In the
 first case, Baker's theorem \cite{Baker} tells us that \(\widetilde{K}\) is a
 \((1,1)\) knot. By a theorem of Berge \cite{Berge}, this implies that
 \(\widetilde{K}\) is simple, and thus that \(\widetilde{K}=K(4n+3,4,\pm
 2)\). In the second
 case, we can apply Proposition~\ref{Prop:dInvt}
 to compute the \(d\)-invariant of
 the homology sphere \(Y\) obtained by integer surgery on
 \(\widetilde{K}\). We find
 that 
\begin{align*}
d(Y) & = d(K'_{\mp1})\pm 2 \\
& = d(S^3) \pm 2 = \pm 2
\end{align*}
so \(Y\) could not have been \(S^3\). It
 follows that
\(\widetilde{K}=K(4n+3,4,\pm 2)\), and thus that \(K\) is the positive
\((2,2n+1)\) torus knot. 
\end{proof}

We conclude by noting that knots
of the form considered in this section do exist, and that in fact 
there are infinitely many of them. 
In \cite{HeddenLens}, Hedden shows that each lens space \(L(p,q)\)
contains  a \((1,1)\) knot \(T_L\) with \(\HFK(T_L) \cong
\Z^{p+2}\)   and \(\tau(T_L, \spi_0) = -1\).
 An easy calculation shows
that \(T_L\) is in the same homology class as the simple knot
\(K(p,q,q+1)\), so it admits an integer \(\Z\)HS surgery whenever \(q
\equiv \pm (q+1)^2 \ (p)\). This surgery will be an L-space if and
only if \(q \equiv  (q+1)^2 \ (p)\) and \(\text{width} \
\HFK(K(p,q,q+1) <2p\). If we put \(k=q+1\), the first condition
becomes \(k^2-k+1 \equiv 0 \ (p)\), which implies that \(K(p,k^2,k)\)
is a Berge knot of Type VII. (See the next section for more details.)
Thus the second condition is implied by the first, and there
is exactly one knot of this type for each Berge knot of type VII. 
In \cite{HeddenLens}, Hedden conjectures that \(T_L\) and its mirror
image are the only knots in \(L(p,q)\) for which 
 \(\HFK(K) \cong
\Z^{p+2}\). If the conjecture is true, then these are the only knots
of this form. 

In small examples of this type, it is possible to identify the
resulting L-space homology sphere  as the
Poincar{\'e} sphere by using GAP \cite{GAP} to show that its
fundamental group has finite order. (The largest example for
 which the author  was able to do this was obtained by surgery on
  \(T_L \subset L(39,16)\).) It seems likely that this is always the
  case, but the author does not know how to prove it.

\section{Simple Knots}
\label{Sec:Simple}
 In this section, we explain how Conjecture~\ref{Conj:Num} can be
 rephrased as an elementary (to state, at least) question in number
 theory. We give a simple algorithm for computing the genus of the
 \(K(p,q,k)\) and explain which \(K(p,q,k)\) correspond to the knots
 found by Berge and Tange. Finally, we give some numerical evidence to
 support the conjecture.

\subsection{Genus of simple knots} 
The Fox calculus provides us with a simple algorithm
 to compute the genus of \(K(p,q,k)\). Given \(p,q\) and \(k\), we
 define a function \(f_{p,q,k} : \Z/p \to \Z\) by the relation
\begin{equation*}
f_{p,q,k}(i+1) - f_{p,q,k}(i) = \begin{cases}
  k-p & \text{if} \ iq \equiv 1,2,\ldots,k \ (p) \\ k & \text{otherwise}.
\end{cases}
\end{equation*} 
together with the normalization \(f(0) = 0\). Let \(G(p,q,k)\) be the
difference between the maximum and minimum values of \(f\). Then we
have
\begin{prop}
\(\displaystyle G(p,q,k) = \text{{\em width}} \ts \HFK(K(p,q,k)). \)
\end{prop}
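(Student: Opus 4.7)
The plan is to compute the Alexander gradings of the generators $x_0, \ldots, x_{p-1}$ of $\CF(K(p,q,k))$ using the Fox calculus procedure from Section~\ref{Sec:HFK}, and verify that the successive increments along $\beta$ agree with those defining $f_{p,q,k}$. As shown in the examples at the ends of Sections~\ref{SubSec:Diagrams} and~\ref{Sec:HFK}, the $p$ generators all lie in distinct $\spinc$ structures on $L(p,q)$, so the complex has no differentials and $\HFK(K(p,q,k), j)$ is nontrivial (and equal to $\Z$) exactly when $j$ is the Alexander grading of some $x_i$. Hence $\text{width} \ \HFK(K(p,q,k)) = \max_i A(x_i) - \min_i A(x_i)$, and we must prove that this equals $G(p,q,k)$.

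For the Fox calculus, let $a$ denote the $\pi_1(Z-K)$-generator associated to the original $\alpha_1$ curve and $m$ the one associated to the meridian $\alpha_2$. The relator $w$ is a word of length $p+k$ (read off from $\beta$) in which $a$ appears $p$ times with sign $+1$ and $m$ appears $k$ times with sign $+1$, for appropriate choice of orientations. Since $K(p,q,k)$ is primitive (so $\gcd(k,p)=1$), $H_1(Z-K) \cong \Z$, and the exponent-sum relation $p|a| + k|m| = 0$ together with $\gcd(|a|, |m|) = 1$ forces $(|a|, |m|) = (k, -p)$ up to an overall sign, matching the $K(5,1,2)$ example. Traversing $\beta$ in its natural cyclic order, the $\alpha_1$-intersections appear in the order $y_i := x_{iq \bmod p}$ for $i = 0, 1, \ldots, p-1$, and by the Fox derivative formula the Alexander grading $A(y_i)$ equals the abelianization of the prefix of $w$ ending just before the $a$-letter at $y_i$, up to a fixed overall shift.

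The remaining step is geometric: between the $a$-letters for $y_i$ and $y_{i+1}$, the word $w$ contains an $m$-letter if and only if the arc of $\beta$ between these two crossings passes across the short arc on $\alpha$ joining the basepoint $z$ (near $x_0$) to $w$ (near $x_k$). Modeling the Heegaard torus as $\R^2/\Z^2$ with $\alpha_1$ horizontal and $\beta$ a line of slope $p/q$, one checks directly that this crossing occurs precisely when $iq \bmod p \in \{1, 2, \ldots, k\}$. Substituting $|a| = k$ and $|m| = -p$ yields
\[
A(y_{i+1}) - A(y_i) \ = \ \begin{cases} k - p & \text{if} \ iq \equiv 1, 2, \ldots, k \ (p), \\ k & \text{otherwise,} \end{cases}
\]
which is exactly the defining increment of $f_{p,q,k}$. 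Hence $A(y_i) - A(y_0) = f_{p,q,k}(i)$ for each $i$, so $\max A - \min A = \max f - \min f = G(p,q,k)$.

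The main obstacle is the geometric step in the third paragraph: pinning down precisely which of the intervals between consecutive $\alpha_1$-crossings of $\beta$ contain an $m$-letter, and confirming that the combinatorial condition gives the particular interval $\{1, 2, \ldots, k\}$ rather than a cyclically shifted equivalent. Once this is verified (which requires a careful look at the fundamental domain of the torus and the placement of the basepoints $z$ and $w$), the remainder is a direct generalization of the Fox calculus computation already carried out in the $K(5,1,2)$ example.
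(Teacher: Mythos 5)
Your argument is correct and follows essentially the same route as the paper's proof: both read off the relator for $\beta$ from the standard genus-one doubly pointed diagram, abelianize to get $|a|=k$, $|m|=-p$, and identify the Fox derivative $d_a w$ with $\sum_i t^{f_{p,q,k}(i)}$, with the $\spinc$-separation of the generators (so that $\text{width}\ \HFK$ is just the spread of Alexander gradings) left implicit in the paper's earlier examples. The cyclic-shift ambiguity you flag at the end only translates $f_{p,q,k}$ by a constant and reindexes it, so it does not affect $G(p,q,k)$.
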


\begin{proof} ({\it c.f} section 5 of \cite{OSLens}) 
We refer to the standard Heegaard diagram of
  \(K(p,q,k)\) described in section~\ref{SubSec:SimpleKnot}. Label the
  points of \(\alpha \cap \beta \) by \(x_0,x_1,\ldots,x_{p-1}\) as we go
  from left to right along \(\alpha\). As we transverse \(\beta\), we
  encounter the \(x_i\) in the following order: \(x_0,x_q,x_{2q},
  \ldots x_{(p-1)q}\). From this, it is easy to see that the relator
  corresponding to \(\beta\) is \(w=w_0w_1\ldots w_{p-1}\), where 
$$w_i = \begin{cases}
  ma & \text{if} \ iq \in [1,\ldots k] \\ a & \text{otherwise}.
\end{cases} $$
 If we
  abelianize, this relation becomes \(pa + km = 0\), so the
  abelianization map 
 is given by \(|a|=k\),
  \(|m|=-p\). 
It is now easy to see that 
\begin{equation*}
 d_a w  = \sum_{i=0}^{p-1} t ^{f_{p,q,k}(i)}
\end{equation*}
which proves the claim.
\end{proof}

\subsection{Berge knots}
Several families of simple knots with integer surgeries yielding
\(S^3\) were discovered by Berge \cite{Berge}.
 We summarize his results here.
To describe these families, 
it is enough to specify the parameters \(p\)
and \(k\), since \(q \equiv \pm k^2 \ (p)\) whenever \(K(p,q,k)\) has
an integer \(\Z\)HS surgery. 

The Berge knots may be divided into two broad classes. The first class
is much more numerous, and consists of knots in the solid torus which
have solid torus surgeries \cite{BergeTorus, GabaiTorus}. The families
in this class depend only on the value of \(p\) modulo \(k^2\). They
are
\vskip0.05in
\noindent{\bf Berge Types I and II:}
$ \displaystyle
p \equiv ik \pm 1 \ (k^2) \quad \gcd (i,k) = 1, 2 
$
\vskip0.05in
\noindent{\bf Berge Type III:}
$ \displaystyle
 \begin{cases} p \equiv \pm (2k-1)d \ (k^2), \quad d \vert k+1, \frac{k+1}{d} \
  \text{odd} \\
 p \equiv \pm (2k+1)d \ (k^2), \quad d \vert k-1, \frac{k-1}{d} \
  \text{odd}
\end{cases}
$
\vskip0.05in
\noindent{\bf Berge Type IV:}
$ \displaystyle
 \begin{cases} p \equiv \pm (k-1)d \ (k^2), \quad d \vert 2k+1, 
  \\
 p \equiv \pm (k+1)d \ (k^2), \quad d \vert 2k-1, 
\end{cases} $
\vskip0.05in
\noindent{\bf Berge Type V:}
$ \displaystyle
 \begin{cases} p \equiv \pm (k+1)d \ (k^2), \quad d \vert k+1, \ d \
  \text{odd} \\
 p \equiv \pm (k-1)d \ (k^2), \quad d \vert k-1, \ {d} \
  \text{odd}
\end{cases} 
$
\vskip0.05in
\noindent
(Berge's type VI is actually a special case of type V.)
Observe that the expressions for \(p\) in types III-V all divide
either \(2k^2\pm k -1\) (for Types III and IV) or \(k^2 \pm 2k +1\)
(for Type V). 

The remaining {\it exceptional} Berge types also involve a quadratic
expression in \(k\), but now the modulus appearing in the relation is
\(p\). They are 
\vskip0.05in
\noindent{\bf Berge Types VII and VIII:}
$ \displaystyle
k^2\pm k \pm 1 \equiv 0 \ (p) 
$
\vskip0.05in
\noindent{\bf Berge Type IX:}
$ \displaystyle
p=22j^2+9j+1, \quad \ k=11j+2 \quad  \text{for all }  j \in \Z
$
\vskip0.05in
\noindent{\bf Berge Type X:} \ 
$ \displaystyle
p=22j^2+13j+2, \quad k=11j+3 \quad  \text{for all }  j \in \Z
$
\vskip0.05in
\noindent
(Berge's types XI and XII are realized by taking negative values for
\(j\) in types IX and X respectively.) We remark that the 
knots of types IX and X all 
 satisfy the quadratic relationship \(2k^2+k+1 \equiv 0 \ (p)\). 

\subsection{Tange knots}
Examples   of simple knots
with Poincar{\'e} sphere surgeries have recently  been discovered by
Tange \cite{Tange}. He observed that with a
 single exception --- the knot \(K(191,34,15)\) --- these knots all
 fall into quadratic families, similar to Berge's families IX and X. 
Like the  exceptional Berge knots, Tange's families exhibit the
following property: the members of a given family all satisfy a simple
quadratic equation modulo \(p\). The table below lists  Tange's
 families and the quadratic relations \(Q(k) \equiv 0 \
 (p)\) 
which they satisfy.

\begin{equation*}
\begin{array}{|l|l|l||l|l|l|}
\hline
\rule{0pt}{10pt} \phantom{XXXx}p & \phantom{Xx}k & \phantom{XX}Q(k) & \phantom{XXXx}p & \phantom{Xx}k & \phantom{XX}Q(k)\\
\hline
\rule{0pt}{10pt}14j^2+7j+1 & 7j+2 & 2k^2-k+1 & & & \\
\hline
\rule{0pt}{10pt} 20j^2+15j+3 & 5j+2 & 4k^2-k-1 & & & \\
\hline
\rule{0pt}{10pt} 30j^2 +9j+1 & 6j+1 & 4k^2-k+1 & & & \\
\hline
\rule{0pt}{10pt}42j^2+23j+3 &7j+2 & 6k^2-k-1 & 42j^2+47j+13 & 7j+4 & 6k^2-k-1 \\
\hline
\rule{0pt}{10pt}52j^2+15j+1 & 13j+2 & 4k^2-k-1 & 52j^2+63j+19 & 13j +8
& 4k^2-k-1 \\
\hline
\rule{0pt}{10pt}54j^2+15j+1 & 27j+4 & 2k^2-k-1 & 54j^2+39j+7 & 27j+10
& 2k^2-k-1 \\
\hline
\rule{0pt}{10pt} 69j^2+17j+1 & 23j+3 & 3k^2-k-1 & 69j^2+29j+3 & 23j+5
& 3k^2-k-1 \\
\hline
\rule{0pt}{10pt} 85j^2+19j+1 & 17j+2 & 5k^2-k-1 & 85j^2+49j+7 & 17j+5
& 5k^2-k-1 \\
\hline
\rule{0pt}{10pt} 99j^2+35j+3 & 11j+2 & 9k^2-k-1 & 99j^2+53j+7 & 11j+3
& 9k^2-k-1 \\
\hline
\rule{0pt}{10pt} 120j^2+16j+1 & 12j+1 & 5k^2-2k+3 & 
102j^2+104j+22 & 12j+5 & 5k^2+2k-3 \\
\hline
\rule{0pt}{10pt} 120j^2+20j+1 & 20j+2 & 3k^2-2k+2 & 120j^2+36j+3 &
12j+2 & 5k^2-2k+2 \\
\hline
\end{array}
\end{equation*}
\vskip0.05in
The conjecture stated in the introduction says that \(G(p,k^2,k)<2p\)
if and only if one of the pairs   \((p,\pm k)\) or \((p, \pm k^{-1})\)
 belongs to one of the types
described in this and the preceding section. Using a computer, we have
verified that the conjecture holds for all \(p \leq 100,000\). As discussed
in the introduction, this implies that if \(L(p,q)\) is realized as
surgery on a knot in \(S^3\) for \(p \leq 100,000\), then it can be realized as
surgery on a Berge knot. 
 
\bibliographystyle{plain}
\bibliography{../mybib}

\end{document}